\numberwithin{equation}{subsection}
\newtheorem{thm}{Theorem}[subsection]
\newtheorem{cor}[thm]{Corollary}
\newtheorem{lem}[thm]{Lemma}
\newtheorem{pro}[thm]{Proposition}
\newtheorem*{thm*}{Theorem}
\theoremstyle{remark}
\newtheorem{rem}[thm]{Remark}
\theoremstyle{definition}
\DeclareMathOperator{\lin}{\mbox{\sc lin}}
\DeclareMathOperator{\D}{d}
\DeclareMathOperator{\dess}{{\mathsf{Des}}}
\DeclareMathOperator{\dzii}{{\mathsf{Chi}}}
\DeclareMathOperator{\koo}{{\mathsf{root}}}
\DeclareMathOperator{\Koo}{{\mathsf{Root}}}
\DeclareMathOperator{\paa}{{\mathsf{par}}}
\newcommand*{\borel}[1]{{\mathfrak B}(#1)}
\newcommand*{\cbb}{\mathbb C}
\newcommand*{\cc}{\mathcal C}
\newcommand*{\card}[1]{\mathrm{card}(#1)}
\newcommand*{\des}[1]{{\dess(#1)}}
\newcommand*{\dz}[1]{{\EuScript D}(#1)}
\newcommand*{\dzi}[1]{\dzii(#1)}
\newcommand*{\dzin}[2]{\dzii^{\langle#1\rangle}(#2)}
\newcommand*{\dzn}[1]{{\EuScript D}^\infty(#1)}
\newcommand*{\ee}{\mathcal E}
\newcommand*{\escr}{{\mathscr{E}_V}}
\newcommand*{\ff}{\mathcal F}
\newcommand*{\Ge}{\geqslant}
\newcommand*{\hh}{\mathcal H}
\newcommand*{\is}[2]{\langle#1,#2\rangle}
\newcommand*{\kk}{\mathcal K}
\newcommand*{\Ko}[1]{\Koo(#1)}
\newcommand*{\lambdab}{{\boldsymbol\lambda}}
\newcommand*{\lambdabxx}[2]{\lambdab^{\hspace{-.3ex} \langle #1 \rangle}_{u|#2}}
\newcommand*{\lambdai}[1]{{\lambda_{#1}^{\hspace{-.3ex}{\langle} i  \rangle}}}
\newcommand*{\lambdabi}{{\lambdab^{\hspace{-.3ex}\langle i \rangle}}}
\newcommand*{\Le}{\leqslant}
\newcommand*{\mm}{\mathscr M}
\newcommand*{\mui}[1]{{\mu_{#1}^{\hspace{-.25ex}\langle i \rangle}}}
\newcommand*{\nbb}{\mathbb N}
\newcommand*{\ogr}[1]{\boldsymbol B(#1)}
\newcommand*{\pa}[1]{\paa(#1)}
\newcommand*{\quasi}[1]{\mathscr Q(#1)}
\newcommand*{\rbb}{\mathbb R}
\newcommand*{\slam}{S_{\boldsymbol \lambda}}
\newcommand*{\smalloplus}{\raise0pt\hbox{$\scriptscriptstyle \oplus$}}
\newcommand*{\sti}[1]{\mathscr S(#1)}
\newcommand*{\supp}[1]{\mathrm{supp}\,#1}
\newcommand*{\tcal}{{\mathscr T}}
\newcommand*{\varepsiloni}[1]{{\varepsilon_{#1}^{\hspace{-.1ex}\langle i \rangle}}}
\newcommand*{\xx}{\mathcal X}
\newcommand*{\zbb}{\mathbb Z}
\begin{document}
   \title[Unbounded subnormal weighted shifts on directed trees]
{Unbounded subnormal weighted shifts on directed
trees}
   \author[P.\ Budzy\'{n}ski]{Piotr Budzy\'{n}ski}
   \address{Katedra Zastosowa\'{n} Matematyki,
Uniwersytet Rolniczy w Krakowie, ul.\ Balicka 253c,
PL-30198 Krak\'ow}
   \email{piotr.budzynski@ur.krakow.pl}
   \author[Z.\ J.\ Jab{\l}o\'nski]{Zenon Jan Jab{\l}o\'nski}
\address{Instytut Matematyki, Uniwersytet Jagiello\'nski,
ul.\ \L ojasiewicza 6, PL-30348 Kra\-k\'ow, Poland}
   \email{Zenon.Jablonski@im.uj.edu.pl}
   \author[I.\ B.\ Jung]{Il Bong Jung}
   \address{Department of Mathematics, Kyungpook
National University, Daegu 702-701, Korea}
   \email{ibjung@knu.ac.kr}
   \author[J.\ Stochel]{Jan Stochel}
\address{Instytut Matematyki, Uniwersytet Jagiello\'nski,
ul.\ \L ojasiewicza 6, PL-30348 Kra\-k\'ow, Poland}
   \email{Jan.Stochel@im.uj.edu.pl}
   \thanks{Research of the first, second
and fourth authors was supported by the MNiSzW
(Ministry of Science and Higher Education) grant NN201
546438 (2010-2013). The third author was supported by
Basic Science Research Program through the National
Research Foundation of Korea (NRF) grant funded by the
Korea government (MEST) (2009-0093125).}
    \subjclass[2010]{Primary 47B20, 47B37; Secondary 44A60}
\keywords{Directed tree, weighted shift on a directed
tree, Stieltjes moment sequence, subnormal operator}
   \begin{abstract}
A new method of verifying the subnormality of
unbounded Hilbert space operators based on an
approximation technique is proposed. Diverse
sufficient conditions for subnormality of unbounded
weighted shifts on directed trees are established. An
approach to this issue via consistent systems of
probability measures is invented. The role played by
determinate Stieltjes moment sequences is elucidated.
Lambert's characterization of subnormality of bounded
operators is shown to be valid for unbounded weighted
shifts on directed trees that have sufficiently many
quasi-analytic vectors, which is a new phenomenon in
this area. The cases of classical weighted shifts and
weighted shifts on leafless directed trees with one
branching vertex are studied.
   \end{abstract}
   \maketitle
   \section{Introduction}
The theory of bounded subnormal operators was
originated by P. Halmos in \cite{hal1}. Nowadays, its
foundations are well-developed (see \cite{con2}; see
also \cite{c-f} for a recent survey article on this
subject). The theory of unbounded symmetric operators
had been established much earlier (see \cite{jvn} and
the monograph \cite{stone}). In view of Naimark's
theorem, these particular operators resemble {\em
unbounded} subnormal operators, i.e., operators having
normal extensions in (possibly larger) Hilbert spaces.
The first general results on unbounded subnormal
operators appeared in \cite{bis} and \cite{foi} (see
also \cite{sli}). A systematic study of this class of
operators was undertaken in the trilogy
\cite{StSz3,StSz1,StSz4}. The theory of unbounded
subnormal operators has intimate connections with
other branches of mathematics and quantum physics (see
\cite{sz2,at-cha1,at-cha2} and
\cite{jor,stob,sz1,j-s}). It has been developed in two
main directions, the first is purely theoretical (cf.\
\cite{m-s,jin,StSz2,e-v,vas1,dem1,dem2,dem3,vas2,vas3,al-vas}),
the other is related to special classes of operators
(cf.\ \cite{c-j-k,kou,k-t1,k-t2}). In this paper, we
will focus our attention mostly on the class of
weighted shifts on directed trees.

The notion of a weighted shift on a directed tree
generalizes that of a weighted shift on the $\ell^2$
space, the classical object of operator theory (see
e.g., the monograph \cite{nik} on the unilateral shift
operator, \cite{shi} for a survey article on bounded
unilateral and bilateral weighted shifts, and
\cite{ml} for basic facts on unbounded ones). In a
recent paper \cite{j-j-s}, we have studied some
fundamental properties of weighted shifts on directed
trees. Although considerable progress has been made in
this field, a number of fundamental questions have not
been answered. Our aim in this paper is to continue
investigations along these lines with special emphasis
put on the issue of subnormality of unbounded
operators, the case which is essentially more
complicated and not an easy extension of the bounded
one. The main difficulty comes from the fact that the
celebrated Lambert characterization of subnormality of
bounded operators (cf.\ \cite{Lam}) is no longer valid
for unbounded ones (see Section \ref{subs1}; see also
\cite{j-j-s4} for a surprising counterexample). A new
criterion (read:\ sufficient condition) for
subnormality of unbounded operators has been invented
recently in \cite{c-s-sz}. By using it, we will show
that subnormality is preserved by the operation of
taking a certain limit (see Theorem \ref{tw1}). This
enables us to perform the approximation procedure
relevant to unbounded weighted shifts on directed
trees. What we get is Theorem \ref{main}, which is the
main result of this paper. It provides a criterion for
subnormality of unbounded weighted shifts on directed
trees written in terms of consistent systems of
measures (which is new even in the case of bounded
operators). Roughly speaking, for bounded and some
unbounded operators having dense set of
$C^\infty$-vectors, the assumption that
$C^\infty$-vectors generates Stieltjes moment
sequences implies subnormality. As discussed in
Section \ref{subs1}, there are unbounded operators for
which this is not true (the reverse implication is
always true, cf.\ Proposition \ref{necess-gen}). It is
a surprising fact that there are non-hyponormal
operators having dense set of $C^\infty$-vectors
generating Stieltjes moment sequences. These are
carefully constructed weighted shifts on a leafless
directed tree with one branching vertex (cf.\
\cite{j-j-s4}). The same operators do not satisfy the
consistency condition $2^\circ$ of Lemma
\ref{charsub2} and none of them has consistent system
of measures.

Under some additional assumption, the criterion for
subnormality formulated in Theorem \ref{main} becomes
a full characterization (cf.\ Corollary
\ref{necessdet2}). This is the case in the presence of
quasi-analytic vectors (cf.\ Theorem \ref{main-0}),
which is the first result of this kind (see Section
\ref{cfs} for more comments).

It is worth mentioning that our method of proving
Theorem \ref{main} depends essentially on the passage
through weighted shifts that may have zero weights.

The assumption that all basic vectors coming from
vertices of the directed tree are $C^\infty$-vectors
diminishes the class of weighted shifts to which
Theorem \ref{main} can be applied. Note that there are
weighted shifts on directed trees with nonzero
weights, whose squares have trivial domain (directed
trees admitting such pathological weighted shifts are
the largest possible, cf.\ \cite{j-j-s3}).
Unfortunately, the known criteria for subnormality
that can be applied to such operators seems to be
useless (see Section \ref{cfsub} for more comments).

It was shown in \cite{j-j-s2} that, in most cases, a
normal extension of a nonzero subnormal weighted shift
on a directed tree $\tcal$ with nonzero weights could
not be modelled as a weighted shift on a directed tree
$\hat \tcal$ (no relationship between $\tcal$ and
$\hat\tcal$ is required); the only exceptional cases
are those in which the directed tree $\tcal$ is
isomorphic either to $\zbb$ or to $\zbb_+$.

Though our Theorem \ref{main} provides only sufficient
conditions for subnormality of weighted shifts on
directed trees, in the case of classical weighted
shifts it gives the full characterization (cf.\
Section \ref{cws}). The case of leafless directed
trees with one branching vertex is discussed in
Section \ref{obv} (see \cite{j-j-s4} for new phenomena
that happen for weighted shifts on such simple
directed trees).
   \section{Preliminaries}
   \subsection{Notation and terminology}
Let $\zbb$, $\rbb$ and $\cbb$ stand for the sets of
integers, real numbers and complex numbers
respectively. Define
   \begin{align*}
\text{$\zbb_+ = \{0,1,2,3,\ldots\}$, $\nbb =
\{1,2,3,4,\ldots\}$ and $\rbb_+ = \{x \in \rbb \colon x \Ge
0\}$.}
   \end{align*}
We write $\borel{\rbb_+}$ for the $\sigma$-algebra of all
Borel subsets of $\rbb_+$. The closed support of a positive
Borel measure $\mu$ on $\rbb_+$ is denoted by $\supp \mu$.
We write $\delta_0$ for the Borel probability measure on
$\rbb_+$ concentrated at $0$. We denote by $\card Y$ the
cardinal number of a set $Y$.

Let $A$ be an operator in a complex Hilbert space
$\hh$ (all operators considered in this paper are
linear). Denote by $\dz{A}$ and $A^*$ the domain and
the adjoint of $A$ (in case it exists). Set $\dzn{A} =
\bigcap_{n=0}^\infty\dz{A^n}$; members of $\dzn{A}$
are called {\em $C^\infty$-vectors} of $A$. A linear
subspace $\ee$ of $\dz{A}$ is said to be a {\em core}
of $A$ if the graph of $A$ is contained in the closure
of the graph of the restriction $A|_{\ee}$ of $A$ to
$\ee$. If $A$ is closed, then $\ee$ is a core of $A$
if and only if $A$ coincides with the closure of
$A|_{\ee}$. A closed densely defined operator $N$ in
$\hh$ is said to be {\em normal} if $N^*N=NN^*$
(equivalently:\ $\dz{N}=\dz{N^*}$ and
$\|N^*h\|=\|Nh\|$ for all $h \in \dz{N}$). For other
facts concerning unbounded operators (including normal
ones) that are needed in this paper we refer the
reader to \cite{b-s,weid}. A densely defined operator
$S$ in $\hh$ is said to be {\em subnormal} if there
exists a complex Hilbert space $\kk$ and a normal
operator $N$ in $\kk$ such that $\hh \subseteq \kk$
(isometric embedding) and $Sh = Nh$ for all $h \in \dz
S$. It is clear that subnormal operators are closable
and their closures are subnormal.

In what follows, $\ogr \hh$ stands for the
$C^*$-algebra of all bounded operators $A$ in $\hh$
such that $\dz{A}=\hh$. We write $\lin \ff$ for the
linear span of a subset $\ff$ of $\hh$.
   \subsection{Directed trees}
   Let $\tcal=(V,E)$ be a directed graph (i.e., $V$ is
the set of all vertices of $\tcal$ and $E$ is the set
of all edges of $\tcal$). If for a given vertex $u \in
V$, there exists a unique vertex $v\in V$ such that
$(v,u)\in E$, then we say that $u$ has a parent $v$
and write $\pa u$ for $v$. Since the correspondence $u
\mapsto \pa u$ is a partial function (read:\ a
relation) in $V$, we can compose it with itself
$k$-times ($k \in \nbb$); the result is denoted by
$\paa^k$ ($\paa^0$ is the identity mapping on $V$). A
vertex $v$ of $\tcal$ is called a {\em root} of
$\tcal$, or briefly $v \in \Ko \tcal$, if there is no
vertex $u$ of $\tcal$ such that $(u,v)$ is an edge of
$\tcal$. Note that if $\tcal$ is connected and each
vertex $v \in V^\circ:=V\setminus \Ko{\tcal}$ has a
parent, then the set $\Ko{\tcal}$ has at most one
element (cf.\ \cite[Proposition 2.1.1]{j-j-s}). If
$\Ko \tcal$ is a one-point set, then its unique
element is denoted by $\koo$. We say that a directed
graph $\tcal$ is a {\em directed tree} if $\tcal$ is
connected, has no circuits and each vertex $v \in
V^\circ$ has a parent $\pa v$.

Let $\tcal=(V,E)$ be a directed tree. Set $\dzi u =
\{v\in V\colon (u,v)\in E\}$ for $u \in V$. A member
of $\dzi u$ is called a {\em child} (or {\em
successor}) of $u$. We say that $\tcal$ is {\em
leafless} if $V = V^\prime$, where $V^\prime:=\{u \in
V \colon \dzi u \neq \varnothing\}$. It is clear that
every leafless directed tree is infinite. A vertex $u
\in V$ is called a {\em branching vertex} of $\tcal$
if $\card{\dzi{u}} \Ge 2$.

It is well-known that (see e.g., \cite[Proposition
2.1.2]{j-j-s}) if $\tcal$ is a directed tree, then
$\dzi u \cap \dzi v = \varnothing$ for all $u, v\in V$
such that $u \neq v$, and
   \begin{align} \label{roz}
V^\circ= \bigsqcup_{u\in V} \dzi u.
   \end{align}
(The symbol ``\,$\bigsqcup$\,'' denotes disjoint
union of sets.) For a subset $W \subseteq V$, we
put $\dzi W = \bigsqcup_{v \in W} \dzi v$ and
define $\dzin{0}{W} = W$, $\dzin{n+1}{W} =
\dzi{\dzin{n}{W}}$ for $n\in \zbb_+$ and $\des W
= \bigcup_{n=0}^\infty \dzin n W$. By induction,
we have
   \allowdisplaybreaks
   \begin{align} \label{n+1}
\dzin{n+1}{W} & = \bigcup_{v \in \dzi{W}}
\dzin{n}{\{v\}}, \quad n \in \zbb_+,
   \\
\label{chmn} \dzin{m}{\dzin{n}{W}} & =
\dzin{m+n}{W}, \quad m,n \in \zbb_+.
   \end{align}
We shall abbreviate $\dzin n {\{u\}}$ and
$\des{\{u\}}$ to $\dzin n u$ and $\des{u}$
respectively. We now state some useful properties of
the functions $\dzin{n}{\cdot}$ and $\des{\cdot}$.
   \begin{pro}
If $\tcal$ is a directed tree, then
   \begin{align}  \label{num4}
\dzin{n}{u} &= \{w \in V\colon \paa^n(w)=u\}, \quad n
\in \zbb_+,\, u \in V,
   \\
\label{dzinn2} \dzin{n+1}{u} & = \bigsqcup_{v \in
\dzi{u}} \dzin{n}{v}, \quad n \in \zbb_+,\, u \in V,
   \\
\label{num1} \dzin{n+1}{u} & = \bigsqcup_{v \in
\dzin{n}{u}} \dzi{v}, \quad n \in \zbb_+,\, u \in V,
   \\
\des u & = \bigsqcup_{n=0}^\infty \dzin n u, \quad
u\in V, \label{num3}
   \\
\des{u_1} \cap \des{u_2} & = \varnothing, \quad u_1,
u_2 \in \dzi{u},\, u_1 \neq u_2,\, u \in V.
\label{num3+}
   \end{align}
   \end{pro}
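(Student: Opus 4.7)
The plan is to dispatch the five identities in the stated order, using induction on $n$ where appropriate and leaning on two structural facts about $\tcal$: each $v \in V^\circ$ has a unique parent $\paa(v)$, so $\paa$ is a well-defined partial function, and $\tcal$ contains no circuits.

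For \eqref{num4}, I would induct on $n$. The base $n=0$ is immediate from $\dzin{0}{u} = \{u\}$ and $\paa^0 = \mathrm{id}$. For the step, expand $\dzin{n+1}{u} = \dzi{\dzin{n}{u}}$: a vertex $w$ lies there exactly when $\paa(w) \in \dzin{n}{u}$, i.e., by the inductive hypothesis, $\paa^n(\paa(w)) = u$, which rewrites as $\paa^{n+1}(w) = u$.

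Identities \eqref{dzinn2} and \eqref{num1} then follow quickly. The bare set equalities come from \eqref{n+1} and from the definition $\dzin{n+1}{W} = \dzi{\dzin{n}{W}}$ respectively. Disjointness in \eqref{num1} is exactly $\dzi{v_1} \cap \dzi{v_2} = \varnothing$ for $v_1 \neq v_2$, already recorded in the paragraph preceding \eqref{roz}; disjointness in \eqref{dzinn2} uses \eqref{num4}, since a common element $w \in \dzin{n}{v_1} \cap \dzin{n}{v_2}$ would force $v_1 = \paa^n(w) = v_2$.

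For \eqref{num3}, the union is the definition of $\des u$, so the only work is disjointness. If $w \in \dzin{m}{u} \cap \dzin{n}{u}$ with $m < n$, then \eqref{num4} gives $\paa^m(w) = u = \paa^n(w)$, and applying $\paa^{n-m}$ to the first equation yields $\paa^{n-m}(u) = u$, a circuit through $u$. For \eqref{num3+}, a hypothetical $w \in \des{u_1} \cap \des{u_2}$ furnishes $m, k \in \zbb_+$ with $\paa^m(w) = u_1$ and $\paa^k(w) = u_2$; taking $m \le k$ one obtains $\paa^{k-m}(u_1) = u_2$. The case $k = m$ contradicts $u_1 \neq u_2$, and for $k > m$ combining with $\paa(u_1) = \paa(u_2) = u$ forces $\paa^{k-m}(u) = u$, again a circuit. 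The main nuisance throughout is keeping track of the partial nature of $\paa$: every manipulation of an identity $\paa^i(x) = y$ requires that $\paa^i$ actually be defined at $x$, and this is precisely where the no-circuit axiom has to be invoked. Conceptually nothing is deep here, but careless treatment of domains can silently invalidate the disjointness arguments.
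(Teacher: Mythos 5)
Your proposal is correct and follows essentially the same route as the paper's (very terse) proof: induction on $n$ for \eqref{num4}, the set identities \eqref{n+1} and the definition of $\dzin{n+1}{\cdot}$ for \eqref{dzinn2} and \eqref{num1}, and the uniqueness of $\paa^n(w)$ together with the no-circuit axiom for the disjointness claims. The only cosmetic difference is that you prove \eqref{num3+} directly by producing a circuit, whereas the paper deduces it from \eqref{num4} and \eqref{num3}; your attention to the domain of the partial map $\paa^k$ is exactly the right care to take.
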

   \begin{proof}
Equality \eqref{num4} follows by induction on $n$.
Combining \eqref{n+1} with the fact that the sets
$\dzin{n} u$, $u \in V$, are pairwise disjoint for
every fixed integer $n \Ge 0$, we get \eqref{dzinn2}.
Equality \eqref{num1} follows from the definition of
$\dzin{n+1}{u}$ and \eqref{roz}. Using the definition
of $\paa$ and the fact that $\tcal$ has no circuits, we
deduce that the sets $\dzin n u$, $n \in \zbb_+$, are
pairwise disjoint. Hence, \eqref{num3} holds. Assertion
\eqref{num3+} can be deduced from \eqref{num4} and
\eqref{num3}.
   \end{proof}
    \begin{pro}[\mbox{\cite[Corollary 2.1.5]{j-j-s}}]
\label{przem} If $\tcal$ is a directed tree with root,
then $V = \des{\koo} = \bigsqcup_{n=0}^\infty \dzin n
{\koo}$.
   \end{pro}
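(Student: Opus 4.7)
The plan is to establish both equalities separately, with the second being an immediate consequence of facts already in hand and the first requiring a connectedness argument.

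First I would observe that the identity $\des{\koo} = \bigsqcup_{n=0}^\infty \dzin{n}{\koo}$ is literally \eqref{num3} specialized to $u = \koo$, so nothing needs to be proved there. The substantive content is the inclusion $V \subseteq \des{\koo}$ (the reverse being trivial).

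My strategy for $V \subseteq \des{\koo}$ is a connectedness argument. Setting $W := \des{\koo}$, I would show that $W$ is ``saturated'' in the sense that for every edge $(u,v) \in E$, we have $u \in W$ if and only if $v \in W$. The forward direction uses \eqref{num4}: if $\paa^n(u) = \koo$ and $v \in \dzi{u}$, then $\paa^{n+1}(v) = \paa^n(u) = \koo$, so $v \in \dzin{n+1}{\koo} \subseteq W$. The reverse direction is similar: if $v \in W$ with $v \in \dzi{u}$, then $v \neq \koo$ (since $\koo \in \Ko{\tcal}$ has no incoming edge), so $v \in \dzin{m}{\koo}$ for some $m \Ge 1$, hence $\paa^{m-1}(u) = \paa^m(v) = \koo$ and $u \in W$. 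Once this saturation property is in hand, $W$ and $V \setminus W$ would be separated in the underlying undirected graph of $\tcal$; since $\tcal$ is connected and $\koo \in W$, this forces $V \setminus W = \varnothing$.

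The only point where one has to be slightly careful is the reverse implication in the saturation step, where one must note that $v = \koo$ is impossible given $v \in \dzi{u}$; this uses that $\koo$ is a root, i.e., has no parent. I do not anticipate any real obstacle: the proof is short and purely combinatorial once \eqref{num4}, \eqref{num3}, and the connectedness of $\tcal$ are invoked.
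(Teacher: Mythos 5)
Your proof is correct. Note that the paper itself offers no argument for this proposition — it is quoted verbatim from \cite[Corollary 2.1.5]{j-j-s} — so there is no in-text proof to compare against; your saturation-plus-connectedness argument is a sound, self-contained derivation. The two directions of the saturation step are handled properly via \eqref{num4} (in particular the observation that $v\in\dzi{u}$ forces $v\neq\koo$, so that the exponent $m$ is at least $1$ and $\paa^{m-1}(u)=\koo$ makes sense), the second equality is indeed just \eqref{num3} at $u=\koo$, and connectedness of the underlying undirected graph then yields $V=\des{\koo}$.
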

   \subsection{Weighted shifts on directed trees}
In what follows, given a directed tree $\tcal$, we
tacitly assume that $V$ and $E$ stand for the sets of
vertices and edges of $\tcal$ respectively. Denote by
$\ell^2(V)$ the Hilbert space of all square summable
complex functions on $V$ with the standard inner
product $\is fg = \sum_{u \in V} f(u)
\overline{g(u)}$. For $u \in V$, we define $e_u \in
\ell^2(V)$ to be the characteristic function of the
one-point set $\{u\}$. Then $\{e_u\}_{u\in V}$ is an
orthonormal basis of $\ell^2(V)$. Set $\escr = \lin
\{e_u\colon u \in V\}$.

Given $\lambdab = \{\lambda_v\}_{v \in V^\circ}
\subseteq \cbb$, we define the operator $\slam$ in
$\ell^2(V)$ by
   \begin{align*}
   \begin{aligned}
\dz {\slam} & = \{f \in \ell^2(V) \colon
\varLambda_\tcal f \in \ell^2(V)\},
   \\
\slam f & = \varLambda_\tcal f, \quad f \in \dz
{\slam},
   \end{aligned}
   \end{align*}
where $\varLambda_\tcal$ is the mapping defined on
functions $f\colon V \to \cbb$ via
   \begin{align} \label{lamtauf}
(\varLambda_\tcal f) (v) =
   \begin{cases}
\lambda_v \cdot f\big(\pa v\big) & \text{ if } v\in
V^\circ,
   \\
0 & \text{ if } v=\koo.
   \end{cases}
   \end{align}
We call $\slam$ a {\em weighted shift} on the
directed tree $\tcal$ with weights
$\lambdab=\{\lambda_v\}_{v \in V^\circ}$.

Now we select some properties of weighted shifts on
directed trees that will be needed in this paper (see
Propositions 3.1.2, 3.1.3, 3.1.8, 3.4.1, 3.1.7 and
3.1.10 in \cite{j-j-s}). In what follows, we adopt the
convention that $\sum_{v\in\varnothing} x_v=0$.
   \begin{pro}\label{bas}
Let $\slam$ be a weighted shift on a directed tree
$\tcal$ with weights $\lambdab = \{\lambda_v\}_{v \in
V^\circ}$. Then the following assertions hold{\em :}
   \begin{enumerate}
   \item[(i)] $\slam$ is closed,
   \item[(ii)]
$e_u \in \dz{\slam}$ if and only if $\sum_{v\in\dzi u}
|\lambda_v|^2 < \infty${\em ;} if $e_u \in
\dz{\slam}$, then
   \begin{align} \label{eu}
\slam e_u = \sum_{v\in\dzi u} \lambda_v e_v \quad
\text{and} \quad \|\slam e_u\|^2 = \sum_{v\in\dzi u}
|\lambda_v|^2,
   \end{align}
   \item[(iii)]
$\overline{\dz{\slam}}=\ell^2(V)$ if and only if
$\escr \subseteq \dz{\slam}$,
   \item[(iv)]  if
$\overline{\dz{\slam}}=\ell^2(V)$, then $\escr$ is a
core of $\slam$,
   \item[(v)] $\slam \in \ogr{\ell^2(V)}$ if and
only if $\alpha_{\lambdab}:=\sup_{u\in
V}\sum\nolimits_{v\in\dzi u} |\lambda_v|^2 <
\infty${\em ;} if $\slam \in \ogr{\ell^2(V)}$, then
$\|\slam\|^2=\alpha_{\lambdab}$,
   \item[(vi)]
if $\overline{\dz{\slam}}=\ell^2(V)$, then $\escr
\subseteq \dz{\slam^*}$ and
   \begin{align} \label{sl*}
\slam^*e_u=
   \begin{cases}
   \overline{\lambda_u} e_{\pa u} & \text{if } u \in V^\circ, \\
0 & \text{if } u = \koo,
   \end{cases}
   \quad u \in V,
   \end{align}
   \item[(vii)]
$\slam$ is injective if and only if $\tcal$ is
leafless and $\sum_{v\in\dzi u} |\lambda_v|^2 > 0$ for
every $u\in V$,
   \item[(viii)] if $\overline{\dz{\slam}}=\ell^2(V)$
and $\lambda_v \neq 0$ for all $v \in V^\circ$, then
$V$ is at most countable.
   \end{enumerate}
   \end{pro}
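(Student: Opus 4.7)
The plan is to treat each of the eight assertions separately, using the explicit formula \eqref{lamtauf} together with the tree-theoretic identities \eqref{roz} and \eqref{num4}--\eqref{num3+} as the backbone. The cornerstone is (ii): from \eqref{lamtauf}, $(\varLambda_\tcal e_u)(v)$ equals $\lambda_v$ when $v\in\dzi u$ and $0$ otherwise, so $\varLambda_\tcal e_u = \sum_{v\in\dzi u}\lambda_v e_v$ (interpreted as a function on $V$), and the Parseval identity immediately gives $\|\varLambda_\tcal e_u\|^2 = \sum_{v\in\dzi u}|\lambda_v|^2$. Both halves of (ii) and the formula \eqref{eu} follow at once.

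For (i), suppose $f_n \to f$ in $\ell^2(V)$ and $\slam f_n \to g$. Since $\ell^2$-convergence forces pointwise convergence and $\paa$ is a fixed partial function on $V$, $(\varLambda_\tcal f_n)(v) = \lambda_v f_n(\pa v) \to \lambda_v f(\pa v) = (\varLambda_\tcal f)(v)$ for $v \in V^\circ$, hence $\varLambda_\tcal f = g \in \ell^2(V)$, giving closedness. For (iii), the easy direction is clear. For the converse, I would argue by contraposition: if $e_u \notin \dz\slam$ for some $u \in V$, then $\sum_{v\in\dzi u}|\lambda_v|^2 = \infty$; but then for any $f \in \dz\slam$, evaluating $\varLambda_\tcal f$ on $\dzi u$ gives $\sum_{v\in\dzi u}|\lambda_v|^2|f(u)|^2 < \infty$, forcing $f(u)=0$, so $\dz\slam \subseteq \{f : f(u)=0\}$, contradicting density. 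This is the main obstacle of the proposition: all the remaining parts reduce to direct orthonormal-basis computations, whereas this step requires noticing the codimension-one obstruction from a single divergent series.

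For (iv), with $\escr \subseteq \dz\slam$ by (iii), I would take $f \in \dz\slam$, fix an exhaustion of $V$ by finite sets $V_n$, set $f_n = \sum_{u \in V_n} f(u) e_u \in \escr$, and verify $f_n \to f$ and $\slam f_n \to \slam f$ in $\ell^2(V)$ via dominated-convergence-style arguments using $f, \varLambda_\tcal f \in \ell^2(V)$ together with \eqref{roz}. For (v), use \eqref{roz} to rewrite $\|\slam f\|^2 = \sum_{u\in V}|f(u)|^2\sum_{v\in\dzi u}|\lambda_v|^2$ (after a partitioning argument on $V^\circ$), yielding $\|\slam f\|^2 \Le \alpha_{\lambdab}\|f\|^2$ and the matching lower bound via $e_u$. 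Assertion (vi) follows by computing $\langle\slam e_u, e_w\rangle$ using (ii) and the disjointness \eqref{roz}, then identifying the adjoint on basis vectors via \eqref{sl*}.

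For (vii), note $\slam f = 0$ iff $\lambda_v f(\pa v) = 0$ for every $v\in V^\circ$; nondegeneracy of the weight sums combined with the leafless condition (so every $u\in V$ equals $\pa v$ for some $v$) forces $f\equiv 0$, and conversely if some $\dzi u = \varnothing$ or all weights on $\dzi u$ vanish, then $e_u \in \ker\slam$. Finally, (viii) follows because (ii) applied at each $u$ with $\lambda_v \neq 0$ makes $\dzi u$ at most countable, so by induction each $\dzin n{\koo}$ is at most countable (in the rooted case), and Proposition \ref{przem} yields $\card V \Le \aleph_0$; the rootless case is reduced to the rooted one by picking any vertex and applying \eqref{num3} upward along the parent chain.
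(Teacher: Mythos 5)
Your proposal is essentially correct; note that the paper itself offers no proof of this proposition --- it is imported verbatim from Propositions 3.1.2, 3.1.3, 3.1.7, 3.1.8, 3.1.10 and 3.4.1 of \cite{j-j-s} --- so the comparison is with the standard arguments, which are exactly the ones you give. The computation $\varLambda_\tcal e_u=\sum_{v\in\dzi u}\lambda_v e_v$ for (ii), the pointwise-convergence argument for closedness, the contrapositive ``$f(u)=0$ for all $f\in\dz{\slam}$'' obstruction for (iii), and the partition \eqref{roz} giving $\|\varLambda_\tcal f\|^2=\sum_{u\in V}|f(u)|^2\sum_{v\in\dzi u}|\lambda_v|^2$ for (v) are all correct and are the intended route. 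Three small points deserve tightening. In (iv), $V$ is not assumed countable, so ``an exhaustion of $V$ by finite sets'' should be replaced by finite subsets of the (at most countable) support of $f$, or by a direct $\varepsilon$-argument choosing a finite $V_0$ with $\sum_{u\notin V_0}|f(u)|^2\bigl(1+\sum_{v\in\dzi u}|\lambda_v|^2\bigr)<\varepsilon$. In (vi), computing $\is{\slam e_w}{e_u}$ only identifies the candidate for $\slam^*e_u$; to conclude $e_u\in\dz{\slam^*}$ you should compute $\is{\slam f}{e_u}=(\varLambda_\tcal f)(u)=\lambda_u f(\pa u)$ for \emph{arbitrary} $f\in\dz{\slam}$, which exhibits the functional as bounded directly. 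In (viii), the rootless reduction rests on the identity $V=\bigcup_{k=0}^\infty\des{\paa^k(u)}$, which requires the connectedness of $\tcal$ and is not a consequence of \eqref{num3} alone; it is \cite[Proposition 2.1.6]{j-j-s}, quoted later in this paper, and should be invoked (or proved by walking an undirected path from $u$ to an arbitrary vertex) rather than waved at. None of these affects the substance of the argument.
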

   \subsection{Backward extensions of Stieltjes moment sequences}
We say that a sequence $\{t_n\}_{n=0}^\infty$ of real
numbers is a {\em Stieltjes moment sequence} if there
exists a positive Borel measure $\mu$ on $\rbb_+$ such
that
   \begin{align*}
t_{n}=\int_0^\infty s^n \D\mu(s),\quad n\in \zbb_+,
   \end{align*}
where $\int_0^\infty$ means integration over the set
$\rbb_+$; $\mu$ is called a {\em representing measure}
of $\{t_n\}_{n=0}^\infty$. A Stieltjes moment sequence
is said to be {\em determinate} if it has only one
representing measure. By the Stieltjes theorem (cf.\
\cite[Theorem~ 1.3]{sh-tam} or \cite[Theorem
6.2.5]{ber}), a sequence $\{t_n\}_{n=0}^\infty
\subseteq \rbb$ is a Stieltjes moment sequence if and
only if the sequences $\{t_n\}_{n=0}^\infty$ and
$\{t_{n+1}\}_{n=0}^\infty$ are positive definite
(recall that a sequence $\{t_n\}_{n=0}^\infty
\subseteq \rbb$ is said to be {\em positive definite}
if $\sum_{k,l=0}^n t_{k+l} \alpha_k
\overline{\alpha_l} \Ge 0$ for all $\alpha_0,\ldots,
\alpha_n \in \cbb$ and $n \in \zbb_+$). It is clear
from the definition that
   \begin{align}  \label{st+1}
\text{if $\{t_n\}_{n=0}^\infty$ is a Stieltjes moment
sequence, then so is $\{t_{n+1}\}_{n=0}^\infty$.}
   \end{align}
The converse is not true in general. For example, the
sequence of the form $\{t_{n}\}_{n=0}^\infty=\{t_0,1,
0, 0, \ldots\}$ is never a Stieltjes moment sequence,
but $\{t_{n+1}\}_{n=0}^\infty = \{1, 0, 0, \ldots\}$
is (see Lemma \ref{bext} below for more detailed
discussion of this issue). Moreover, if
$\{t_n\}_{n=0}^\infty$ is an indeterminate Stieltjes
moment sequence, then so is $\{t_{n+1}\}_{n=0}^\infty$
(see Lemma \ref{bext}; see also \cite[Proposition
5.12]{sim}). The converse implication fails to hold
(cf.\ \cite[Corollary 4.21]{sim}; see also
\cite{j-j-s4}).

The question of backward extendibility of Hamburger
moment sequences has well-known solutions (see e.g.,
\cite{wri} and \cite{sz}). Below, we formulate a
solution of a variant of this question for Stieltjes
moment sequences (see \cite[Lemma 6.1.2]{j-j-s} for
the special case of compactly supported representing
measures; see also \cite[Proposition 8]{cur} for a
related matter).
   \begin{lem} \label{bext}
Let $\{t_n\}_{n=0}^\infty$ be a Stieltjes moment
sequence and let $\vartheta$ be a positive real
number. Set $t_{-1}=\vartheta$. Then the following are
equivalent{\em :}
   \begin{enumerate}
   \item[(i)] $\{t_{n-1}\}_{n=0}^\infty$ is a
Stieltjes moment sequence,
   \item[(ii)]  $\{t_{n-1}\}_{n=0}^\infty$ is positive
definite,
   \item[(iii)] there is a representing measure $\mu$
of $\{t_n\}_{n=0}^\infty$ such
that\/\footnote{\;\label{foot}We adhere to the
convention that $\frac 1 0 := \infty$. Hence,
$\int_0^\infty \frac 1 s \D \mu(s) < \infty$ implies
$\mu(\{0\})=0$.} $\int_0^\infty \frac 1 s \D \mu(s)
\Le \vartheta$.
   \end{enumerate}
Moreover, if {\em (i)} holds, then the mapping
$\mm_0(\vartheta) \ni \mu \to \nu_{\mu} \in
\mm_{-1}(\vartheta)$ defined by
   \begin{align}   \label{nu}
\nu_{\mu}(\sigma) = \int_\sigma \frac 1 s \D \mu(s) +
\Big(\vartheta - \int_0^\infty \frac 1 s \D
\mu(s)\Big) \delta_0(\sigma), \quad \sigma \in
\borel{\rbb_+},
   \end{align}
is a bijection with the inverse $ \mm_{-1}(\vartheta)
\ni \nu \to \mu_{\nu} \in\mm_0(\vartheta)$ given by
   \begin{align} \label{mu}
\mu_{\nu} ( \sigma) = \int_\sigma s \D \nu (s), \quad
\sigma \in \borel{\rbb_+},
   \end{align}
where $\mm_0(\vartheta)$ stands for the set of all
representing measures $\mu$ of $\{t_n\}_{n=0}^\infty$
such that $\int_0^\infty \frac 1 s \D \mu(s) \Le
\vartheta$, and $\mm_{-1}(\vartheta)$ for the set of
all representing measures $\nu$ of
$\{t_{n-1}\}_{n=0}^\infty$. In particular,
$\nu_{\mu}(\{0\})=0$ if and only if $\int_0^\infty
\frac 1 s \D \mu(s)=\vartheta$.

If {\em (i)} holds and $\{t_n\}_{n=0}^\infty$ is
determinate, then $\{t_{n-1}\}_{n=0}^\infty$ is
determinate, the unique representing measure $\mu$ of
$\{t_n\}_{n=0}^\infty$ satisfies the inequality
$\int_0^\infty \frac 1 s \D \mu(s) \Le \vartheta$, and
$\nu_{\mu}$ is the unique representing measure of
$\{t_{n-1}\}_{n=0}^\infty$.
   \end{lem}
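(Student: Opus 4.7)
The plan is to establish (i) $\Leftrightarrow$ (ii) by invoking the Stieltjes theorem quoted in the text, and (i) $\Leftrightarrow$ (iii) by explicit construction using the formulas \eqref{nu} and \eqref{mu}, which simultaneously yields the bijection and the determinate case. For (i) $\Rightarrow$ (ii), every Stieltjes moment sequence is automatically positive definite. For (ii) $\Rightarrow$ (i), the Stieltjes theorem says that $\{t_{n-1}\}_{n=0}^\infty$ is a Stieltjes moment sequence iff both $\{t_{n-1}\}_{n=0}^\infty$ and its shift $\{t_n\}_{n=0}^\infty$ are positive definite; the former is (ii) and the latter is free because $\{t_n\}_{n=0}^\infty$ is already a Stieltjes moment sequence.

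For (iii) $\Rightarrow$ (i), starting from $\mu \in \mm_0(\vartheta)$ (so that the footnote convention forces $\mu(\{0\})=0$), a direct computation against \eqref{nu} gives $\int_0^\infty \D \nu_\mu = \vartheta = t_{-1}$, and for $n \Ge 1$, $\int_0^\infty s^n \D \nu_\mu = \int_0^\infty s^{n-1} \D \mu = t_{n-1}$ (the atom at $0$ contributes nothing since $s^n$ vanishes there). Hence $\nu_\mu \in \mm_{-1}(\vartheta)$ and (i) holds. Conversely, assuming (i), for any $\nu \in \mm_{-1}(\vartheta)$ the measure $\mu_\nu$ of \eqref{mu} satisfies $\mu_\nu(\{0\})=0$ and $\int_0^\infty s^n \D \mu_\nu = \int_0^\infty s^{n+1} \D \nu = t_n$ for $n \Ge 0$, together with $\int_0^\infty s^{-1} \D \mu_\nu = \nu((0,\infty)) = \vartheta - \nu(\{0\}) \Le \vartheta$, giving $\mu_\nu \in \mm_0(\vartheta)$ and (iii).

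The two maps $\mu \mapsto \nu_\mu$ and $\nu \mapsto \mu_\nu$ just constructed are mutually inverse: substitution into \eqref{nu} and \eqref{mu} yields $\mu_{\nu_\mu}(\sigma) = \mu(\sigma \cap (0,\infty)) = \mu(\sigma)$ and $\nu_{\mu_\nu}(\sigma) = \nu(\sigma \cap (0,\infty)) + \nu(\{0\}) \delta_0(\sigma) = \nu(\sigma)$, where in the second identity one uses $\vartheta - \int_0^\infty s^{-1} \D \mu_\nu = \nu(\{0\})$ from above. The ``in particular'' clause $\nu_\mu(\{0\})=0 \Leftrightarrow \int_0^\infty s^{-1} \D \mu = \vartheta$ is then immediate from \eqref{nu}. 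For the determinate addendum, if $\{t_n\}_{n=0}^\infty$ has a unique representing measure $\mu$ and (i) holds, then $\mu = \mu_\nu \in \mm_0(\vartheta)$ for any $\nu \in \mm_{-1}(\vartheta)$, and the injectivity of the bijection forces any two such $\nu$'s to coincide with $\nu_\mu$, yielding determinacy of $\{t_{n-1}\}_{n=0}^\infty$ with unique representing measure $\nu_\mu$.

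The only delicate point — the ``main obstacle'', though a mild one — is bookkeeping around the convention $1/0=\infty$ and possible atoms at $0$: once one observes that the hypothesis $\int_0^\infty s^{-1} \D \mu \Le \vartheta$ eliminates mass of $\mu$ at $0$, and that $\mu_\nu(\{0\})=0$ automatically from \eqref{mu}, every remaining integral identity reduces to a routine application of change of variable for densities.
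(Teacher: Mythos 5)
Your proposal is correct and follows essentially the same route as the paper: (i)$\Leftrightarrow$(ii) via the Stieltjes theorem, the explicit constructions \eqref{nu} and \eqref{mu} for (iii)$\Rightarrow$(i) and (i)$\Rightarrow$(iii) respectively (with the key observation that $\int_0^\infty \frac 1 s \D\mu(s) \Le \vartheta$ forces $\mu(\{0\})=0$ and that $\mu_\nu(\{0\})=0$ automatically), and the determinacy addendum read off from the bijection. The only cosmetic difference is that you verify both composites $\mu_{\nu_\mu}=\mu$ and $\nu_{\mu_\nu}=\nu$ directly, while the paper establishes surjectivity via $\nu_{\mu_\nu}=\nu$ and injectivity from the identity $\mu(\sigma)=\int_{\sigma\setminus\{0\}} s\,\D\nu_\mu(s)$; these amount to the same thing.
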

   \begin{proof}
Equivalence (i)$\Leftrightarrow$(ii) follows from the
Stieltjes theorem.

(iii)$\Rightarrow$(i) Clearly, if $\mu \in
\mm_0(\vartheta)$, then $t_{n-1}= \int_0^\infty s^n \D
\nu_{\mu}(s)$ for all $n \in \zbb_+$, which means that
$\{t_{n-1}\}_{n=0}^\infty$ is a Stieltjes moment sequence
and $\nu_{\mu} \in \mm_{-1}(\vartheta)$.

(i)$\Rightarrow$(iii) Take $\nu \in \mm_{-1}(\vartheta)$.
Setting $\mu:=\mu_{\nu}$ (cf.\ \eqref{mu}), we see that
   \begin{align}  \label{tnrep}
t_n = t_{(n+1)-1} = \int_0^\infty s^n s\D \nu(s) =
\int_0^\infty s^n \D \mu(s), \quad n \in \zbb_+.
   \end{align}
It is clear that $\mu(\{0\})=0$ and thus
   \begin{align*}
\int_0^\infty \frac 1 s \D \mu(s) & =
\int_{(0,\infty)} \D \nu(s) = \nu((0,\infty))
   \\
   & = \int_{[0,\infty)} s^0 \D \nu(s) - \nu(\{0\}) =
\vartheta - \nu(\{0\}),
   \end{align*}
which implies that $\int_0^\infty \frac 1 s \D \mu(s)
\Le \vartheta$. This, combined with \eqref{tnrep},
shows that $\mu \in \mm_0(\vartheta)$. Since
$\nu(\rbb_+)=\vartheta$, we deduce from \eqref{nu} and
the definition of $\mu$ that
   \allowdisplaybreaks
   \begin{align*}
\nu_{\mu}(\sigma) &= \int_{\sigma\setminus \{0\}}
\frac 1 s \D \mu(s) + \Big(\vartheta-\int_0^\infty
\frac 1 s \D \mu(s)\Big) \delta_0(\sigma \cap \{0\})
   \\
&= \nu(\sigma\setminus \{0\}) + \Big(\vartheta-
\nu((0, \infty))\Big) \delta_0(\sigma \cap \{0\})
   \\
&= \nu(\sigma\setminus \{0\}) + \nu(\{0\})
\delta_0(\sigma \cap \{0\}) = \nu(\sigma), \quad
\sigma \in \borel{\rbb_+},
   \end{align*}
which yields $\nu_{\mu} = \nu$.

We have proved that, under the assumption (i), the
mapping $\mm_0(\vartheta) \ni \mu \to \nu_{\mu} \in
\mm_{-1}(\vartheta)$ is well-defined and surjective.
Its injectivity follows from the equality
   \begin{align*}
\mu(\sigma) = \mu(\sigma \setminus \{0\}) =
\int_{\sigma \setminus \{0\}} s \D \nu_{\mu}(s), \quad
\sigma \in \borel{\rbb_+}, \, \mu \in \mm_0(\vartheta).
   \end{align*}
This yields the determinacy part of the conclusion.
   \end{proof}
   \begin{rem}
Let us discuss some consequences of Lemma
\ref{bext}. Suppose that $\{t_n\}_{n=0}^\infty$
is a determinate Stieltjes moment sequence with a
representing measure $\mu$. If $\int_0^\infty
\frac 1 s \D \mu(s) = \infty$ (e.g., when
$\mu(\{0\}) > 0$), then the sequence
$\{\vartheta, t_0, t_1, \ldots\}$ is never a
Stieltjes moment sequence. In turn, if
$\int_0^\infty \frac 1 s \D \mu(s) < \infty$,
then the sequence $\{\vartheta, t_0, t_1,
\ldots\}$ is a determinate Stieltjes moment
sequence if $\vartheta \Ge \int_0^\infty \frac 1
s \D \mu(s)$, and it is not a Stieltjes moment
sequence if $\vartheta < \int_0^\infty \frac 1 s
\D \mu(s)$.
   \end{rem}
   \begin{rem}
Under the assumptions of Lemma \ref{bext}, if
$\{t_{n-1}\}_{n=0}^\infty$ is a Stieltjes moment
sequence and $t_0 > 0$, then $t_n > 0$ for all $n
\in \zbb_+$ and
   \begin{align*}
\sup_{n \in \zbb_+}\frac{t_n^2}{t_{2n+1}} \Le
\int_0^\infty \frac{1}{s} \D \mu(s) \Le \vartheta,
\quad \mu \in \mm_0(\vartheta).
   \end{align*}
Indeed, since $t_0 > 0$ and $\mu(\{0\})=0$, we
verify that $t_n > 0$ for all $n \in \zbb_+$. By
the Cauchy-Schwarz inequality, we have
   \begin{align*}
t_n^2 = \Big(\int_{(0,\infty)}
s^{-\nicefrac12}s^{n+\nicefrac12} \D \mu(s)\Big)^2 \Le
\int_0^\infty \frac{1}{s} \D\mu(s) \int_0^\infty
s^{2n+1} \D\mu(s), \quad n \in \zbb_+.
   \end{align*}
Note that if $\{t_n\}_{n=0}^\infty$ is
indeterminate, then there is a smallest
$\vartheta$ for which the sequence
$\{t_{n-1}\}_{n=0}^\infty$ is a Stieltjes moment
sequence (see \cite{j-j-s4} for more details).
   \end{rem}
   \section{A General Setting for Subnormality}
   \subsection{\label{cfsub}Criteria for subnormality}
The only known general characterization of
subnormality of unbounded Hilbert space operators is
due to Bishop and Foia\c{s} (cf.\ \cite{bis,foi}; see
also \cite{FHSz} for a new approach via sesquilinear
selection of elementary spectral measures). Since this
characterization refers to semispectral measures (or
elementary spectral measures), it seems to be useless
in the context of weighted shifts on directed trees.
The other known criteria for subnormality require the
operator in question to have an invariant domain (with
the exception of \cite{sz4}). Since a closed subnormal
operator with an invariant domain is automatically
bounded (see \cite[Lemma 2.2(ii)]{las}, see also
\cite{ota,oka}) and a weighted shift operator $\slam$
on a directed tree is always closed (cf.\ Proposition
\ref{bas}\,(i)), we have to find a smaller subspace of
$\dz{\slam}$ which is an invariant core of $\slam$.
This will enable us to apply the aforesaid criteria
for subnormality of operators with invariant domains
in the context of weighted shift operators on directed
trees (see Section \ref{sf-a}).

We begin by recalling a characterization of
subnormality invented in \cite{c-s-sz}.
   \begin{thm}[\mbox{\cite[Theorem 21]{c-s-sz}}]
   \label{chsub} Let $S$ be a densely defined operator
in a complex Hilbert space $\hh$ such that $S(\dz S)
\subset \dz S$. Then the following conditions are
equivalent{\em :}
   \begin{enumerate}
   \item[(i)] $S$ is subnormal,
   \item[(ii)] for every finite system $\{a_{p,q}^{i,j}\}_{p,q = 0,
\ldots, n}^{i,j=1, \ldots, m} \subset \cbb$, if
   \begin{align} \label{1}
\sum_{i,j=1}^m \sum_{p,q=0}^n a_{p,q}^{i,j} \lambda^p
\bar \lambda^q z_i \bar z_j \Ge 0, \quad \lambda, z_1,
\ldots, z_m \in \cbb,
   \end{align}
then
   \begin{align*}
\sum_{i,j=1}^m \sum_{p,q=0}^n a_{p,q}^{i,j} \is{S^p
f_i} {S^q f_j} \Ge 0, \quad f_1, \ldots, f_m \in \dz
S.
   \end{align*}
   \end{enumerate}
   \end{thm}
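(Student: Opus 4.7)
\emph{Plan.} The easy direction $(i) \Rightarrow (ii)$ follows from the spectral theorem for normal operators; the substantive direction is $(ii) \Rightarrow (i)$, which I would handle by a Naimark/GNS-type dilation built on $\cbb[z] \otimes \dz{S}$.

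\emph{Implication $(i) \Rightarrow (ii)$.} Assume $S \subset N$ with $N$ normal in some $\kk \supseteq \hh$. Since $\dz{S}$ is $S$-invariant, induction on $p$ yields $S^p f = N^p f$ for every $f \in \dz{S}$ and $p \in \zbb_+$. Writing $E$ for the spectral measure of $N$ and $E_{ij}(\sigma) := \is{E(\sigma) f_i}{f_j}$,
\begin{align*}
\is{S^p f_i}{S^q f_j} = \int_\cbb z^p \bar z^q \, \D E_{ij}(z).
\end{align*}
Summation against $a_{p,q}^{i,j}$ reduces the desired inequality to
\begin{align*}
\sum_{i,j,p,q} a_{p,q}^{i,j}\is{S^p f_i}{S^q f_j} = \int_\cbb \sum_{i,j} B_{ij}(z) \, \D E_{ij}(z), \qquad B_{ij}(z) := \sum_{p,q} a_{p,q}^{i,j} z^p \bar z^q.
\end{align*}
The polynomial positivity hypothesis \eqref{1} is precisely the assertion that the Hermitian matrix $B(z)$ is positive semidefinite for every $z \in \cbb$, and $\bigl(E_{ij}(\sigma)\bigr)_{i,j}$ is a positive semidefinite matrix measure; the trace pairing of two positive semidefinite matrices is pointwise nonnegative, so the integral is $\Ge 0$.

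\emph{Implication $(ii) \Rightarrow (i)$.} Form the algebraic tensor space $\mathcal V := \cbb[z] \otimes_{\mathrm{alg}} \dz{S}$ equipped with the sesquilinear form
\begin{align*}
\langle z^p \otimes f,\; z^q \otimes g \rangle_0 := \is{S^p f}{S^q g},
\end{align*}
extended sesquilinearly. For a vector $v = \sum_{i,p} c_{i,p}\, z^p \otimes f_i$ the choice $a_{p,q}^{i,j} := c_{i,p}\,\overline{c_{j,q}}$ produces the scalar polynomial $\bigl|\sum_{i,p} c_{i,p}\,\lambda^p z_i\bigr|^2 \Ge 0$, so \eqref{1} is satisfied and hypothesis (ii) yields $\langle v,v\rangle_0 \Ge 0$. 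Quotient $\mathcal V$ by its null space $\mathcal N$ and complete to obtain a Hilbert space $\kk$. The map $\dz{S} \ni f \mapsto [1 \otimes f] \in \kk$ is a linear isometry (since $\langle 1 \otimes f, 1 \otimes f\rangle_0 = \|f\|^2$) and extends by continuity to an isometric embedding $\hh \hookrightarrow \kk$. Define $N_0$ on $\mathcal V/\mathcal N$ by $N_0[z^p \otimes f] := [z^{p+1} \otimes f]$; a one-line computation gives $\langle 1 \otimes Sf, z^p \otimes g\rangle_0 = \is{Sf}{S^p g} = \langle z \otimes f, z^p \otimes g\rangle_0$, whence $[1 \otimes Sf] = [z \otimes f]$ and $N_0$ extends (the image of) $S$.

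\emph{Main obstacle.} The delicate step is to upgrade $N_0$ to a closed \emph{normal} extension $N$ in $\kk$ (or in a further enlargement), since $\mathcal V/\mathcal N$ carries only the $\cbb[z]$-action and has no a priori realization of ``multiplication by $\bar z$''. I see two viable routes. (a) Re-apply hypothesis (ii) to tuples $[z^{p_k} \otimes f_k]$: the polynomial positivity \eqref{1} for such an enlarged system reduces, after reindexing of the monomials, to the original hypothesis, showing that $N_0$ itself satisfies (ii) in $\kk$; its subnormality (through a second dilation) then produces a normal operator extending $S$. (b) Translate (ii) into solvability of the matricial complex moment problem on each finite tuple $(f_1,\ldots,f_m) \subset \dz{S}$, glue the resulting matrix-valued Borel measures on $\cbb$ into a single positive operator-valued measure $F$ via a Kolmogorov-type consistency argument, and invoke the Bishop--Foia\c s characterization recalled in Section~\ref{cfsub} to conclude subnormality of $S$ from the resulting semispectral measure. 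In either approach the hardest point is the passage from the scalar positivity of (ii) to an honest $\cbb[z,\bar z]$-action, i.e.\ to an adjoint that commutes with $N_0$.
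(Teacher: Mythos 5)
First, a point of reference: the paper offers no proof of this statement — it is imported verbatim as \cite[Theorem 21]{c-s-sz} — so there is no internal argument to compare yours against, and your attempt must stand on its own. Your direction (i)$\Rightarrow$(ii) is correct: invariance of $\dz S$ gives $S^pf=N^pf$ by induction, and the pairing of the pointwise positive semidefinite matrix polynomial $\bigl(B_{ij}(z)\bigr)$ with the positive semidefinite matrix measure $\bigl(\is{E(\cdot)f_i}{f_j}\bigr)$ is nonnegative (a rank-one decomposition of either factor makes this precise).

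The converse, however, contains a genuine gap, which you have located but not closed. Two specific problems. First, even before your declared ``main obstacle'', the operator $N_0[z^p\otimes f]=[z^{p+1}\otimes f]$ is not known to be well defined on $\mathcal V/\mathcal N$: one needs $\langle Zv,Zv\rangle_0=0$ whenever $\langle v,v\rangle_0=0$, where $Z$ is multiplication by $z$ on $\mathcal V$. For bounded $S$ this follows from Bram's domination $\langle Zv,Zv\rangle_0\Le\|S\|^2\langle v,v\rangle_0$; for unbounded $S$ no such estimate is available, and mere positive semidefiniteness of the kernel $K(k,l)=\langle Z^kv,Z^lv\rangle_0$ does not force $K(1,1)=0$ from $K(0,0)=0$ (consider $K=\mathrm{diag}(0,1,1,\dots)$). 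What is really needed is that $K$ be a moment kernel, which is already the heart of the matter. Second, of your two routes to a normal extension, (a) is circular as stated: you verify that $N_0$ again satisfies condition (ii) and then invoke ``its subnormality'' — i.e.\ the very implication (ii)$\Rightarrow$(i) being proved — and iterating the dilation does not terminate in a normal operator. Route (b) is the right idea, and is in the spirit of the actual argument in \cite{c-s-sz} (which proceeds by extending positive definite forms and a Riesz--Haviland-type representation), but as written it consists of three unproved substantive claims: a matricial Haviland theorem on $\cbb\cong\rbb^2$, a Kolmogorov-type gluing of the resulting matrix measures into a single semispectral measure, and the Bishop--Foia\c{s} criterion. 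The first of these is precisely where the full strength of (ii) — positivity on the entire cone of pointwise nonnegative polynomials, not merely on sums of squares — must be exploited, and it is essentially equivalent in difficulty to the theorem itself. As it stands, the hard implication is a plan, not a proof.
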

Using the above characterization, we show that
some weak-type limit procedure preserves
subnormality (this can also be done with the help
of either \cite[Theorem 3]{StSz1} or
\cite[Theorem 37]{StSz2}; however these two
characterizations take more complicated forms).
This is a key tool for proving Theorem
\ref{main}.
   \begin{thm} \label{tw1}
Let $\{S_{\omega}\}_{\omega \in \varOmega}$ be a net
of subnormal operators in a complex Hilbert space
$\hh$ and let $S$ be a densely defined operator in
$\hh$. Suppose that there is a subset $\xx$ of $\hh$
such that
   \begin{enumerate}
   \item[(i)] $\xx \subseteq \dzn{S} \cap
\bigcap_{\omega \in \varOmega}\dzn{S_{\omega}}$,
   \item[(ii)] $\ff := \lin \bigcup_{n=0}^\infty
S^n(\xx)$ is a core of $S$,
   \item[(iii)]  $\is{S^m x}{S^n y} =
\lim_{\omega \in \varOmega} \is{S_{\omega}^m
x}{S_{\omega}^n y}$ for all $x,y \in \xx$ and $m,n \in
\zbb_+$.
   \end{enumerate}
Then $S$ is subnormal.
   \end{thm}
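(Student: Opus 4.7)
The plan is to apply Theorem \ref{chsub} to the restriction $S_0 := S|_{\ff}$ and then transfer subnormality to $S$ via the core hypothesis (ii). First I would observe that $S_0(\ff) \subseteq \ff$ holds by the very definition $\ff = \lin\bigcup_{n=0}^\infty S^n(\xx)$, so $S_0$ has the invariant domain required by Theorem \ref{chsub}, while the density of $\ff$ follows from its being a core of the densely defined $S$.

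To verify condition (ii) of Theorem \ref{chsub} for $S_0$, I would fix coefficients $\{a_{p,q}^{i,j}\}$ as in \eqref{1} and vectors $f_1,\ldots, f_m \in \ff$. Each $f_i$ can be written as a \emph{finite} linear combination $f_i = \sum_r \alpha_{i,r}\, S^{k_{i,r}} x_{i,r}$ with $x_{i,r} \in \xx$. Hypothesis (i) then lets me form the ``twin'' $f_i^{\omega} := \sum_r \alpha_{i,r}\, S_\omega^{k_{i,r}} x_{i,r} \in \dzn{S_\omega}$ for every $\omega \in \varOmega$. Expanding $\is{S^p f_i}{S^q f_j}$ and $\is{S_\omega^p f_i^{\omega}}{S_\omega^q f_j^{\omega}}$ bilinearly into finite sums of terms of the form $\is{S^{p+k_{i,r}} x_{i,r}}{S^{q+k_{j,s}} x_{j,s}}$ and $\is{S_\omega^{p+k_{i,r}} x_{i,r}}{S_\omega^{q+k_{j,s}} x_{j,s}}$ respectively, and applying (iii) termwise, yields
\begin{align*}
\is{S^p f_i}{S^q f_j} = \lim_{\omega \in \varOmega} \is{S_\omega^p f_i^{\omega}}{S_\omega^q f_j^{\omega}}, \quad p,q \in \zbb_+,\; i,j=1,\ldots,m.
\end{align*}
For each $\omega$, subnormality of $S_\omega$ gives a normal extension $N_\omega$; since $f_i^{\omega} \in \dzn{S_\omega}$, one has $S_\omega^k f_i^{\omega} = N_\omega^k f_i^{\omega}$ for all $k \in \zbb_+$, and the spectral theorem for $N_\omega$ together with the positivity assumption \eqref{1} yields
\begin{align*}
\sum_{i,j=1}^m \sum_{p,q=0}^n a_{p,q}^{i,j} \is{S_\omega^p f_i^{\omega}}{S_\omega^q f_j^{\omega}} = \sum_{i,j,p,q} a_{p,q}^{i,j} \is{N_\omega^p f_i^{\omega}}{N_\omega^q f_j^{\omega}} \Ge 0.
\end{align*}
Passing to the limit over $\omega$ preserves nonnegativity, so condition (ii) of Theorem \ref{chsub} is verified for $S_0$ and hence $S_0$ is subnormal.

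To finish, let $N$ be any normal extension of $S_0$. Since $N$ is closed and $\ff$ is a core of $S$, one has $S \subseteq \overline{S_0} \subseteq N$, exhibiting $N$ as a normal extension of $S$ as well. The main obstacle is the positivity step for the net $\{S_\omega\}$: no invariant-domain assumption is imposed on $S_\omega$, so Theorem \ref{chsub} cannot be invoked directly for the individual $S_\omega$; the remedy is to replace each $S_\omega^k f_i^{\omega}$ by $N_\omega^k f_i^{\omega}$ and reduce the required positivity to the ordinary spectral calculus for the normal operator $N_\omega$, which is a routine computation.
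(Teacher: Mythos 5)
Your argument is correct and is essentially the proof in the paper: restrict to $\ff$, decompose each $f_i$ over $\xx$, form the twins $f_i^{\omega}$, expand bilinearly, pass to the limit via (iii), verify condition (ii) of Theorem \ref{chsub} for $S|_{\ff}$, and transfer subnormality to $S$ through the core hypothesis. The one step you handle differently is the positivity of $\sum_{i,j,p,q} a_{p,q}^{i,j}\is{S_\omega^p f_i^{\omega}}{S_\omega^q f_j^{\omega}}$ for fixed $\omega$: the paper restricts $S_\omega$ to the $S_\omega$-invariant subspace $\ff_\omega:=\lin\bigcup_{n=0}^\infty S_\omega^n(\xx)$, regarded as a densely defined subnormal operator in $\overline{\ff_\omega}$ with invariant domain, and then invokes the implication (i)$\Rightarrow$(ii) of Theorem \ref{chsub}, whereas you bypass that theorem for the individual $S_\omega$ and re-derive the implication from the spectral measure $E_\omega$ of a normal extension $N_\omega$ (using that $S_\omega^k f_i^{\omega}=N_\omega^k f_i^{\omega}$ on $\dzn{S_\omega}$ and that $[\is{E_\omega(\sigma)f_i^{\omega}}{f_j^{\omega}}]_{i,j}$ is a positive semidefinite matrix measure against which the pointwise-nonnegative form \eqref{1} integrates to a nonnegative number). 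Both devices are sound; the paper's restriction trick merely reuses the quoted theorem instead of unpacking the spectral-theoretic computation you sketch, and your closing observation that $S\subseteq\overline{S|_{\ff}}\subseteq N$ for any normal extension $N$ of $S|_{\ff}$ is exactly what the paper means by combining subnormality of $S|_{\ff}$ with the core assumption.
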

   \begin{proof}
Set $\ff_{\omega}=\lin \bigcup_{n=0}^\infty
S_{\omega}^n(\xx)$ for $\omega \in \varOmega$. It
is clear that $S_{\omega}|_{\ff_{\omega}}$ is a
subnormal operator in $\overline{\ff_{\omega}}$
with an invariant domain.

Take a finite system $\{a_{p,q}^{i,j}\}_{p,q = 0,
\ldots, n}^{i,j=1, \ldots, m}$ of complex numbers
satisfying \eqref{1}. Let $f_1, \ldots, f_m$ be
arbitrary vectors in $\ff$. Then for every $i \in
\{1, \ldots, m\}$, there exists a positive
integer $r$ and a system
$\{\zeta_{x,k}^{(i)}\colon x \in \xx, k= 1,
\ldots, r\}$ of complex numbers such that the set
$\{x \in \xx\colon \zeta_{x,k}^{(i)} \neq 0\}$ is
finite for every $k\in \{1, \ldots, r\}$, and
$f_i = \sum_{x \in \xx}\sum_{k=1}^r
\zeta_{x,k}^{(i)} S^k x$. Set $f_{i,\omega} =
\sum_{x \in \xx}\sum_{k=1}^r \zeta_{x,k}^{(i)}
S_{\omega}^k x$ for $i \in \{1, \ldots, m\}$ and
$\omega \in \varOmega$. Then $f_{i,\omega} \in
\ff_{\omega}$ for all $i \in \{1, \ldots, m\}$
and $\omega \in \varOmega$. Applying Theorem
\ref{chsub} to the subnormal operators
$S_{\omega}|_{\ff_{\omega}}$, we get
\allowdisplaybreaks
   \begin{multline*}
\sum_{i,j=1}^m \sum_{p,q=0}^n a_{p,q}^{i,j} \is{S^p
f_i} {S^q f_j} = \sum_{i,j=1}^m \sum_{p,q=0}^n
\sum_{x,y \in \xx} \sum_{k,l=1}^r a_{p,q}^{i,j}
\zeta_{x,k}^{(i)} \overline{\zeta_{y,l}^{(j)}}
\is{S^{p+k} x} {S^{q+l} y}
   \\
\overset{{\rm (iii)}}= \lim_{\omega \in \varOmega}
\sum_{i,j=1}^m \sum_{p,q=0}^n \sum_{x,y \in \xx}
\sum_{k,l=1}^r a_{p,q}^{i,j} \zeta_{x,k}^{(i)}
\overline{\zeta_{y,l}^{(j)}} \is{S_{\omega}^{p+k}
x} {S_{\omega}^{q+l} y}
   \\
= \lim_{\omega \in \varOmega} \sum_{i,j=1}^m
\sum_{p,q=0}^n a_{p,q}^{i,j} \is{S_{\omega}^p
f_{i,\omega}} {S_{\omega}^q f_{j,\omega}} \Ge 0.
   \end{multline*}
This means that the operator $S|_{\ff}$ satisfies
condition (ii) of Theorem \ref{chsub}. Since
$S|_{\ff}$ has an invariant domain, we deduce from
Theorem \ref{chsub} that $S|_{\ff}$ is subnormal.
Combining the latter with the assumption that $\ff$ is
a core of $S$, we see that $S$ itself is subnormal.
This completes the proof.
   \end{proof}
We say that a densely defined operator $S$ in a
complex Hilbert space $\hh$ is {\em cyclic} with
a {\em cyclic vector} $e \in \hh$ if $e \in
\dzn{S}$ and $\lin\{S^n e\colon n=0,1, \ldots\}$
is a core of $S$.
   \begin{cor}
Let $\{S_{\omega}\}_{\omega \in \varOmega}$ be a net
of subnormal operators in a complex Hilbert space
$\hh$ and let $S$ be a cyclic operator in $\hh$ with a
cyclic vector $e$ such that
   \begin{enumerate}
   \item[(i)] $e \in
\bigcap_{\omega \in \varOmega}\dzn{S_{\omega}}$,
   \item[(ii)]  $\is{S^m e}{S^n e} =
\lim_{\omega \in \varOmega} \is{S_{\omega}^m
e}{S_{\omega}^n e}$ for all $m,n \in \zbb_+$.
   \end{enumerate}
Then $S$ is subnormal.
   \end{cor}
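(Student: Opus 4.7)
The plan is to deduce this corollary directly from Theorem \ref{tw1} by choosing the singleton $\xx = \{e\}$. The corollary is really just a repackaging of that theorem in the cyclic case, so the work amounts to checking that the three hypotheses of Theorem \ref{tw1} are exactly what the cyclicity of $e$ and the two assumptions (i) and (ii) of the corollary deliver.

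First, I would verify hypothesis (i) of Theorem \ref{tw1}. By the definition of a cyclic vector, $e \in \dzn{S}$, and by assumption (i) of the corollary, $e \in \bigcap_{\omega \in \varOmega} \dzn{S_{\omega}}$. Hence $\xx = \{e\} \subseteq \dzn{S} \cap \bigcap_{\omega \in \varOmega}\dzn{S_{\omega}}$.

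Next, with $\xx = \{e\}$, the linear span
\begin{align*}
\ff = \lin \bigcup_{n=0}^\infty S^n(\xx) = \lin\{S^n e\colon n = 0, 1, 2, \ldots\}
\end{align*}
coincides precisely with the subspace required by the definition of a cyclic operator to be a core of $S$. Thus hypothesis (ii) of Theorem \ref{tw1} holds automatically.

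Finally, since $\xx = \{e\}$, the condition $\is{S^m x}{S^n y} = \lim_{\omega \in \varOmega} \is{S_{\omega}^m x}{S_{\omega}^n y}$ required in hypothesis (iii) of Theorem \ref{tw1} for all $x, y \in \xx$ and $m, n \in \zbb_+$ reduces to the single family of identities $\is{S^m e}{S^n e} = \lim_{\omega \in \varOmega} \is{S_{\omega}^m e}{S_{\omega}^n e}$ for $m, n \in \zbb_+$, which is assumption (ii) of the corollary. Applying Theorem \ref{tw1} then yields that $S$ is subnormal. There is no real obstacle here; the only thing to state cleanly is the reduction $\xx = \{e\}$ and to recall from the definition of a cyclic vector that $e \in \dzn{S}$ and that $\lin\{S^n e : n \Ge 0\}$ is a core.
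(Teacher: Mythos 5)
Your proof is correct and is exactly the intended argument: the paper states this corollary without proof as an immediate consequence of Theorem \ref{tw1}, and the specialization $\xx=\{e\}$ you describe is precisely that reduction. All three hypotheses of Theorem \ref{tw1} are verified correctly, so nothing is missing.
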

The following fact can be proved in much the same way
as Theorem \ref{tw1}.
   \begin{pro} \label{tw1+1}
Let $S$ be a densely defined operator in a complex
Hilbert space $\hh$. Suppose that there are a family
$\{\hh_\omega\}_{\omega \in \varOmega}$ of closed
linear subspaces of $\hh$ and an upward directed
family $\{\xx_\omega\}_{\omega \in \varOmega}$ of
subsets of $\hh$ such that
   \begin{enumerate}
   \item[(i)] $\xx_\omega \subseteq
\dzn{S}$ and $S^n(\xx_\omega) \subseteq \hh_\omega$
for all $n\in \zbb_+$ and $\omega \in \varOmega$,
   \item[(ii)] $\ff_\omega:=\lin \bigcup_{n=0}^\infty
S^n(\xx_\omega)$ is dense in $\hh_\omega$ for every
$\omega \in \varOmega$,
   \item[(iii)]  $S|_{\ff_\omega}$ is  a
subnormal operator in $\hh_\omega$ for every $\omega
\in \varOmega$,
   \item[(iv)]  $\ff:=\lin \bigcup_{n=0}^\infty
S^n\big(\bigcup_{\omega \in \varOmega}
\xx_\omega\big)$ is a core of $S$.
   \end{enumerate}
Then $S$ is subnormal.
   \end{pro}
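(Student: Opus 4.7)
The plan is to imitate the proof of Theorem \ref{tw1} closely, with Theorem \ref{chsub} again playing the central role, and the upward-directedness of $\{\xx_\omega\}_{\omega \in \varOmega}$ taking the place of the net-limit argument used there. First I would observe that $\ff$ is invariant under $S$, since $S(S^n x) = S^{n+1} x \in \ff$ for every $x \in \bigcup_{\omega} \xx_\omega$, and that $\ff_\omega \subseteq \ff$ for each $\omega \in \varOmega$. By (iv), $\ff$ is also dense in $\hh$, so $S|_\ff$ is a densely defined operator on $\hh$ with invariant domain, putting us exactly in the setting where Theorem \ref{chsub} can be applied.

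Next I would verify condition (ii) of Theorem \ref{chsub} for $S|_\ff$. Fix complex numbers $\{a_{p,q}^{i,j}\}_{p,q=0,\ldots,n}^{i,j=1,\ldots,m}$ satisfying \eqref{1} and vectors $f_1,\ldots,f_m \in \ff$. Each $f_i$ is a finite linear combination of terms of the form $S^k x$ with $x$ belonging to some $\xx_{\omega_{i,k}}$; since only finitely many indices appear and $\{\xx_\omega\}_{\omega \in \varOmega}$ is upward directed, there exists $\omega^\star \in \varOmega$ such that $\xx_{\omega_{i,k}} \subseteq \xx_{\omega^\star}$ for all the relevant $i,k$. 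Consequently every $f_i$ lies in $\ff_{\omega^\star}$, and by $S$-invariance of $\ff_{\omega^\star}$ so does $S^p f_i$ for every $p \in \zbb_+$. Hypothesis (i) says all these vectors lie in the closed subspace $\hh_{\omega^\star}$, so the inner products $\is{S^p f_i}{S^q f_j}$ computed in $\hh$ coincide with those computed in $\hh_{\omega^\star}$. Hypothesis (iii) then allows me to apply Theorem \ref{chsub} to the subnormal operator $S|_{\ff_{\omega^\star}}$ acting in $\hh_{\omega^\star}$, yielding
\[
\sum_{i,j=1}^m \sum_{p,q=0}^n a_{p,q}^{i,j}\is{S^p f_i}{S^q f_j} \Ge 0.
\]

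Thus $S|_\ff$ satisfies condition (ii) of Theorem \ref{chsub}; having invariant domain, it is therefore subnormal in $\hh$. Finally, any normal extension $N$ of $S|_\ff$ is closed and so extends the closure $\overline{S|_\ff}$, which by assumption (iv) (the core condition) contains $S$; hence $S \subseteq N$ and $S$ is subnormal. The only real subtlety lies in the middle paragraph: producing a single $\omega^\star \in \varOmega$ simultaneously controlling the finitely many pieces of $f_1,\ldots,f_m$, which is precisely what upward-directedness supplies and which makes any passage to a limit unnecessary here, in contrast to Theorem \ref{tw1}.
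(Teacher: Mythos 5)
Your proof is correct and follows essentially the same route as the paper, which simply notes that the families $\{\ff_\omega\}$ and $\{\hh_\omega\}$ are upward directed with $S(\ff_\omega)\subseteq\ff_\omega$ and $\ff=\bigcup_\omega\ff_\omega$, and then repeats the argument of Theorem \ref{tw1}: a common $\omega^\star$ absorbs the finitely many vectors involved, Theorem \ref{chsub} applied to the subnormal operator $S|_{\ff_{\omega^\star}}$ in $\hh_{\omega^\star}$ yields the required positivity, and the core condition (iv) transfers subnormality from $S|_\ff$ to $S$.
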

   \begin{proof}
Clearly, the families $\{\ff_\omega\}_{\omega \in
\varOmega}$ and $\{\hh_\omega\}_{\omega \in
\varOmega}$ are upward directed, $S(\ff_\omega)
\subseteq \ff_\omega$ for all $\omega \in
\varOmega$, $\ff = \bigcup_{\omega \in \varOmega}
\ff_\omega$ and $S(\ff) \subseteq \ff$. Hence, we
can argue as in the proof of Theorem~ \ref{tw1}.
   \end{proof}
   \subsection{\label{subs1}Necessity}
We begin by recalling a well-known fact that
$C^\infty$-vectors of a subnormal operator always
generate Stieltjes moment sequences.
   \begin{pro}\label{necess-gen}
If $S$ is a subnormal operator in a complex
Hilbert space $\hh$, then $\dzn{S} = \sti{S}$,
where $\sti{S}$ stands for the set of all vectors
$f \in \dzn{S}$ such that the sequence $\{\|S^n
f\|^2\}_{n=0}^\infty$ is a Stieltjes moment
sequence.
   \end{pro}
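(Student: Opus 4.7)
The plan is to use the defining property of subnormality to transport the problem to a normal operator, where the spectral theorem produces the required representing measure directly. Note that the inclusion $\sti{S} \subseteq \dzn{S}$ is built into the definition of $\sti{S}$, so the work is entirely in showing $\dzn{S} \subseteq \sti{S}$.

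Fix $f \in \dzn{S}$. By subnormality, pick a Hilbert space $\kk \supseteq \hh$ and a normal operator $N$ in $\kk$ such that $\dz S \subseteq \dz N$ and $Sh = Nh$ for all $h \in \dz S$. The first step is a short induction showing that $f \in \dzn{N}$ and $N^n f = S^n f$ for every $n \in \zbb_+$: since $f \in \dz{S^{n+1}}$ forces $S^k f \in \dz S \subseteq \dz N$ for $k=0,\ldots,n$, the normal iterates are defined and agree with the subnormal ones at each stage.

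Next, invoke the spectral theorem for the normal operator $N$ to obtain a spectral measure $E\colon \borel{\cbb} \to \ogr{\kk}$ with $N = \int_{\cbb} z\, \D E(z)$, and consider the positive Borel measure $\mu_f(\sigma) := \is{E(\sigma)f}{f}$ on $\cbb$. The standard functional-calculus identity
\begin{align*}
\|N^n f\|^2 = \int_{\cbb} |z|^{2n}\, \D\mu_f(z), \quad n \in \zbb_+,
\end{align*}
holds precisely because $f \in \dzn{N}$, and both sides are finite. Push $\mu_f$ forward along the map $\cbb \ni z \mapsto |z|^2 \in \rbb_+$ to obtain a finite positive Borel measure $\nu_f$ on $\rbb_+$. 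A change of variables then gives
\begin{align*}
\|S^n f\|^2 = \|N^n f\|^2 = \int_0^\infty s^n\, \D\nu_f(s), \quad n \in \zbb_+,
\end{align*}
exhibiting $\{\|S^n f\|^2\}_{n=0}^\infty$ as a Stieltjes moment sequence with representing measure $\nu_f$. Hence $f \in \sti{S}$, completing the inclusion $\dzn{S} \subseteq \sti{S}$.

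There is no real obstacle here; the only point requiring a moment's care is checking that $f$ actually lies in $\dzn{N}$ (so that the spectral calculus is applicable), and that the push-forward lives on $\rbb_+$ rather than on $\cbb$. Both are routine once the normal extension is in hand.
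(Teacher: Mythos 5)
Your proposal is correct and follows essentially the same route as the paper: pass to a normal extension $N$, note $\dzn{S}\subseteq\dzn{N}$, and use the spectral measure of $N$ pushed forward under $z\mapsto|z|^2$ (the paper's measure transport theorem with $F(\sigma)=E(\phi^{-1}(\sigma))$, $\phi(z)=|z|^2$) to produce the representing measure on $\rbb_+$. The only difference is that you spell out the inductive verification that $f\in\dzn{N}$, which the paper dismisses as evident.
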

   \begin{proof}
Let $N$ be a normal extension of $S$ acting in a
complex Hilbert space $\kk \supseteq \hh$ and let $E$
be the spectral measure of $N$. Define the mapping
$\phi \colon \cbb \to \rbb_+$ by $\phi(z)=|z|^2$, $z
\in \cbb$. Since evidently $\dzn{S} \subseteq
\dzn{N}$, we deduce from the measure transport theorem
(cf.\ \cite[Theorem 5.4.10]{b-s}) that for every $f
\in \dzn{S}$,
   \begin{align*}
\|S^n f\|^2 = \|N^n f\|^2 &= \Big\|\int_{\cbb} z^n
E(\D z)f\Big\|^2
   \\
   &= \int_{\cbb} \phi(z)^n \is{E(\D z)f}{f} =
\int_0^\infty t^n \is{F(\D t)f}{f}, \quad n \in
\zbb_+,
   \end{align*}
where $F$ is the spectral measure on $\rbb_+$ given by
$F(\sigma) = E(\phi^{-1}(\sigma))$ for $\sigma \in
\borel{\rbb_+}$. This implies that $\dzn{S} \subseteq
\sti{S}$.
   \end{proof}
Note that there are closed symmetric operators
(that are always subnormal due to \cite[Theorem 1
in Appendix I.2]{a-g}) whose squares have trivial
domain (cf.\ \cite{nai,cher}).

It follows from Proposition \ref{necess-gen} that
if $S$ is a subnormal operator in a complex
Hilbert space $\hh$ with an invariant domain,
then $S$ is densely defined and $\dz{S}=\sti{S}$.
One might expect that the reverse implication
would hold as well. This is really the case for
bounded operators (cf.\ \cite{Lam}) and for some
unbounded operators that have sufficiently many
analytic vectors (cf.\ \cite[Theorem 7]{StSz1}).
In Section \ref{cfs} we show that this is also
the case for weighted shifts on directed trees
that have sufficiently many quasi-analytic
vectors (see Theorem \ref{main-0}). However, in
general, this is not the case. Indeed, one can
construct a densely defined operator $N$ in a
complex Hilbert space $\hh$ which is not
subnormal and which has the following properties
(see \cite{Cod,Sch,sto-ark}):
   \begin{gather}  \label{fn1}
N(\dz{N}) \subseteq \dz{N}, \, \dz{N} \subseteq
\dz{N^*}, \, N^*(\dz{N}) \subseteq \dz{N}
   \\
\text{ and } N^*Nf = NN^*f \text{ for all } f\in
\dz{N}. \label{fn2}
   \end{gather}
We show that for such $N$, $\dz{N}=\sti{N}$.
Indeed, by \eqref{fn1} and \eqref{fn2}, we have
   \begin{align*}
\sum_{k,l=0}^n \|N^{k+l}f\|^2 \alpha_k
\overline{\alpha_l} = \sum_{k,l=0}^n
\is{(N^*N)^{k+l}f}{f} \alpha_k \overline{\alpha_l} =
\Big\|\sum_{k=0}^n \alpha_k (N^*N)^k f\Big\|^2 \Ge 0,
   \end{align*}
for all $f \in \dz{N}$, $n \in \zbb_+$ and
$\alpha_0,\ldots, \alpha_n \in \cbb$, which means that the
sequence $\{\|N^{n}f\|^2\}_{n=0}^\infty$ is positive
definite for every $f \in \dz{N}$. Replacing $f$ by $Nf$,
we see that the sequence $\{\|N^{n+1}f\|^2\}_{n=0}^\infty$
is positive definite for every $f \in \dz{N}$. Applying the
Stieltjes theorem, we conclude that $\dz{N}=\sti{N}$.
   \section{Towards Subnormality of Weighted Shifts}
   \subsection{Powers of weighted shifts}
Let $\tcal=(V,E)$ be a directed tree. Given a family
$\{\lambda_v\}_{v \in V^\circ}$ of complex numbers, we
define the family $\{\lambda_{u\mid v}\}_{u \in V, v
\in \des{u}}$ by
   \begin{align}    \label{luv}
\lambda_{u\mid v} =
   \begin{cases}
1 & \text{ if } v=u,
   \\
\prod_{j=0}^{n-1} \lambda_{\paa^{j}(v)} & \text{ if }
v \in \dzin{n}{u}, \, n \Ge 1.
   \end{cases}
   \end{align}
Note that due to \eqref{num3} the above definition is
correct and
   \allowdisplaybreaks
   \begin{align} \label{num2}
\lambda_{u\mid w} & = \lambda_{u\mid v}\lambda_w,
\quad w \in \dzi{v}, \, v\in \des{u}, \, u \in V,
   \\
\lambda_{\pa v\mid w} & = \lambda_v \lambda_{v\mid w},
\quad v \in V^\circ, \, w\in \des v.
   \label{recfor2}
   \end{align}
The following lemma is a generalization of
\cite[Lemma 6.1.1]{j-j-s} to the case of
unbounded operators. Below, we maintain our
general convention that $\sum_{v\in\varnothing}
x_v=~0$.
   \begin{lem} \label{lem4}
Let $\slam $ be a weighted shift on a directed
tree $\tcal$ with weights $\lambdab =
\{\lambda_v\}_{v \in V^\circ}$. Fix $u \in V$ and
$n \in \zbb_+$. Then the following assertions
hold\/{\em :}
   \begin{enumerate}
   \item[(i)] $e_u \in \dz{\slam^n}$ if and only if
$\sum_{v \in \dzin{m}{u}} |\lambda_{u\mid v}|^2 <
\infty$ for all integers $m$ such that $1 \Le m
\Le n$,
   \item[(ii)] if $e_u \in \dz{\slam^n}$, then
$\slam^n e_u = \sum_{v \in \dzin{n}{u}} \lambda_{u\mid
v} \, e_v$,
   \item[(iii)] if $e_u \in \dz{\slam^n}$, then
$\|\slam^n e_u\|^2 = \sum_{v \in \dzin{n}{u}}
|\lambda_{u\mid v}|^2$.
   \end{enumerate}
   \end{lem}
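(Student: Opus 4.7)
The natural strategy is induction on $n$. The base case $n=0$ is essentially trivial: the condition in (i) is vacuous, $\slam^0=I$, $\dzin{0}{u}=\{u\}$ and $\lambda_{u\mid u}=1$ by \eqref{luv}, so (ii) and (iii) follow at once.

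For the inductive step, fix $n\in\zbb_+$ and assume (i)--(iii) hold for $n$. The crucial observation is that for any $f\in\dz{\slam}$ the value $(\slam f)(w)$ is computed \emph{pointwise} by \eqref{lamtauf}; this sidesteps the fact that $\slam$ is not continuous, so we do not have to justify interchanging $\slam$ with an infinite sum of basis vectors. Combined with the partition $\dzin{n+1}{u}=\bigsqcup_{v\in\dzin{n}{u}}\dzi{v}$ from \eqref{num1} and the recursion $\lambda_{u\mid w}=\lambda_{u\mid \pa w}\,\lambda_w$ for $w\in\dzi{\pa w}$ (a special case of \eqref{num2}), this lets us rewrite the $\ell^2$-norm of $\varLambda_\tcal(\slam^n e_u)$ cleanly as $\sum_{w\in\dzin{n+1}{u}}|\lambda_{u\mid w}|^2$.

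To implement this for the forward direction of (i): if $e_u\in\dz{\slam^{n+1}}$, then $e_u\in\dz{\slam^n}$ and $f:=\slam^n e_u\in\dz{\slam}$, so by the inductive hypothesis the conditions on $m=1,\dots,n$ hold and $f=\sum_{v\in\dzin{n}{u}}\lambda_{u\mid v}\,e_v$. Using \eqref{lamtauf} pointwise, one finds $(\slam f)(w)=\lambda_w\,\lambda_{u\mid \pa w}=\lambda_{u\mid w}$ for $w\in\dzin{n+1}{u}$ (via \eqref{num1} and \eqref{num2}) and $(\slam f)(w)=0$ otherwise (in particular $w=\koo$ contributes $0$ by \eqref{lamtauf}). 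Finiteness of $\|\slam f\|^2$ then gives the $m=n+1$ instance of (i) and simultaneously delivers (ii) and (iii) for $n+1$.

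For the reverse direction of (i), assume the inequalities hold for $m=1,\dots,n+1$. The inductive hypothesis again provides $f:=\slam^n e_u$ with the stated formula, and the same pointwise computation shows $\|\varLambda_\tcal f\|^2=\sum_{w\in\dzin{n+1}{u}}|\lambda_{u\mid w}|^2<\infty$, so $f\in\dz{\slam}$ and thus $e_u\in\dz{\slam^{n+1}}$. The main obstacle is not any isolated step but the bookkeeping: one must correctly identify which $w\in V$ contribute to $\varLambda_\tcal f$ (excluding $\koo$ and any $w$ whose parent lies outside $\dzin{n}{u}$) and apply the weight recursion \eqref{num2} to collapse $\lambda_w\,\lambda_{u\mid \pa w}$ to $\lambda_{u\mid w}$. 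Handling $\slam$ pointwise through $\varLambda_\tcal$ is precisely what makes the argument go through in the unbounded setting, where \cite[Lemma~6.1.1]{j-j-s} cannot be invoked directly.
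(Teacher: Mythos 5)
Your proof is correct and follows essentially the same route as the paper's: induction on $n$, with the inductive step carried out by computing $\varLambda_\tcal(\slam^n e_u)$ pointwise via \eqref{lamtauf} and identifying the result with $\sum_{w\in\dzin{n+1}{u}}\lambda_{u\mid w}e_w$ using \eqref{num1} (equivalently \eqref{num4}) and \eqref{num2}. The paper merely packages the same computation with the auxiliary function $\lambdabxx{k}{\cdot}$, so there is nothing substantive to add.
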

   \begin{proof} For $k \in \zbb_+$,
we define the complex function $\lambdabxx{k}{\cdot}$
on $V$ by
   \begin{align}  \label{deflam}
\lambdabxx{k}{v} =
   \begin{cases}
\lambda_{u|v} & \text{ if } v \in \dzin{k}{u},
   \\
0 & \text{ if } v \in V \setminus \dzin{k}{u}.
   \end{cases}
   \end{align}
We shall prove that for every $k\in \zbb_+$,
   \begin{gather}  \label{small}
e_u \in \dz{\slam^k} \text{ if and only if } \sum_{v
\in \dzin{m}{u}} |\lambda_{u\mid v}|^2 < \infty \text{
for } m = 0,1, \ldots, k,
   \end{gather}
   and
   \begin{gather}
\label{small2} \text{if } e_u \in \dz{\slam^k}, \text{
then } \slam^k e_u = \lambdabxx{k}{\cdot}.
   \end{gather}
We use an induction on $k$. The case of $k=0$ is
obvious. Suppose that \eqref{small} and \eqref{small2}
hold for all nonnegative integers less than or equal
to $k$. Assume that $e_u \in \dz{\slam^k}$. Now we
compute $\varLambda_\tcal (\slam^k e_u)$. It follows
from the induction hypothesis and \eqref{deflam} that
   \allowdisplaybreaks
   \begin{align*}
(\varLambda_\tcal (\slam^k e_u))(v)
&\overset{\eqref{lamtauf}}=
   \begin{cases}
\lambda_v (\slam^k e_u)(\paa(v)) & \text{ if } v \in
V^\circ,
   \\
0 & \text{ if } v=\koo,
   \end{cases}
   \\
&\overset{\eqref{small2}}=
   \begin{cases}
\lambda_v \lambdabxx{k}{\paa(v)} & \text{ if } \paa(v)
\in \dzin{k}{u},
   \\
0 & \text{ otherwise,}
   \end{cases}
   \\
& \overset{\eqref{num4}}= \begin{cases} \lambda_v
\lambda_{u|\paa(v)} & \text{ if } v \in \dzin{k+1}{u},
   \\
0 & \text{ otherwise,}
   \end{cases}
   \\
& \overset{\eqref{num2}}= \begin{cases} \lambda_{u|v}
& \text{ if } v \in \dzin{k+1}{u},
   \\
0 & \text{ otherwise,}
   \end{cases}
   \\
&\hspace{1.7ex} = \lambdabxx{k+1}{v}, \quad v \in V,
  \end{align*}
which shows that $\varLambda_\tcal (\slam^k e_u) =
\lambdabxx{k+1}{\cdot}$. This in turn implies that
\eqref{small} and \eqref{small2} hold for $k+1$ in
place of $k$. This proves (i) and (ii). Assertion
(iii) is a direct consequence of (ii).
   \end{proof}
In the context of weighted shifts on directed
trees, the key assumption (iii) of Theorem
\ref{tw1} can be verified by using the following
relatively simple criterion that may be of
independent interest.
   \begin{pro}\label{potegi}
If $\lambdabi =\big\{\lambdai{v}\big\}_{v \in
V^\circ}$, $i=1,2,3, \ldots$, and
$\lambdab=\{\lambda_v\}_{v\in V^\circ}$ are families
of complex numbers such that
   \begin{enumerate}
   \item[(i)] $\escr \subseteq \dzn{\slam}
\cap \bigcap_{i=1}^\infty \dzn{S_{\lambdabi}}$,
   \item[(ii)] $\lim_{i \to \infty} \lambdai{v} =
\lambda_v$ for all $v \in V^\circ$,
   \item[(iii)] $\lim_{i \to \infty}
\|S_{\lambdabi}^n e_u\| = \|\slam^n e_u\|$ for all $n
\in \zbb_+$ and $u \in V$,
   \end{enumerate}
then
   \begin{align}  \label{slim+}
\is{\slam^m e_u}{\slam^n e_v} = \lim_{i \to \infty}
\is{S_{\lambdabi}^m e_u}{S_{\lambdabi}^n e_v}, \quad
u,v \in V, \, m,n \in \zbb_+.
   \end{align}
   \end{pro}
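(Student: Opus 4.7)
The plan is to reduce both inner products in \eqref{slim+} to explicit scalar quantities via Lemma \ref{lem4}, then exploit the disjointness relations \eqref{num4}--\eqref{num3+} and the multiplicative identity \eqref{num2} to collapse everything to quantities that converge by hypotheses (ii) and (iii).

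First I would apply Lemma \ref{lem4}, which is legitimate by hypothesis (i), to write
\[
\slam^m e_u = \sum_{w \in \dzin{m}{u}} \lambda_{u|w}\, e_w, \qquad \slam^n e_v = \sum_{w \in \dzin{n}{v}} \lambda_{v|w}\, e_w,
\]
and similarly for $S_{\lambdabi}$. Orthonormality of $\{e_w\}_{w\in V}$ then yields
\[
\is{\slam^m e_u}{\slam^n e_v} = \sum_{w \in \dzin{m}{u}\cap \dzin{n}{v}} \lambda_{u|w}\overline{\lambda_{v|w}},
\]
with the analogous formula for the approximants.

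Next I would analyze $\dzin{m}{u}\cap \dzin{n}{v}$ using \eqref{num4}: a vertex $w$ lies in the intersection iff $\paa^m(w)=u$ and $\paa^n(w)=v$. By symmetry assume $m \Le n$. Then $w \in \dzin{m}{u}$ forces $\paa^n(w) = \paa^{n-m}(u)$, so the intersection is empty unless $v=\paa^{n-m}(u)$, in which case it equals $\dzin{m}{u}$. When $m=n$ and $u\ne v$, the intersection is empty by \eqref{num4} (or equivalently by the disjointness of the fibers of $\paa^m$), so both sides of \eqref{slim+} vanish and there is nothing to prove. In the nontrivial case $m \Le n$ with $u \in \dzin{n-m}{v}$, iterating \eqref{num2} gives the factorization $\lambda_{v|w} = \lambda_{v|u}\,\lambda_{u|w}$ for every $w \in \dzin{m}{u}$, which reduces the inner product to
\[
\is{\slam^m e_u}{\slam^n e_v} = \overline{\lambda_{v|u}}\sum_{w \in \dzin{m}{u}} |\lambda_{u|w}|^2 = \overline{\lambda_{v|u}}\,\|\slam^m e_u\|^2,
\]
by Lemma \ref{lem4}(iii). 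The same identity holds with $\lambdab$ replaced by $\lambdabi$.

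Finally, since $\lambda_{v|u}$ is the finite product $\prod_{j=0}^{n-m-1}\lambda_{\paa^j(u)}$, hypothesis (ii) gives $\lambda_{v|u}^{\langle i\rangle}\to \lambda_{v|u}$, while hypothesis (iii) gives $\|S_{\lambdabi}^m e_u\|^2 \to \|\slam^m e_u\|^2$; multiplying yields \eqref{slim+} in this case. The symmetric case $m>n$ follows by conjugating. The only real subtlety is getting the case-analysis on $\dzin{m}{u}\cap\dzin{n}{v}$ right and verifying the telescoping identity $\lambda_{v|w}=\lambda_{v|u}\lambda_{u|w}$ from \eqref{num2}; once these are in hand, there is no need to invoke any dominated-convergence argument, because the sum collapses to a single explicit product.
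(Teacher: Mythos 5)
Your proposal is correct and follows essentially the same route as the paper's proof: expand both powers via Lemma \ref{lem4}, reduce the inner product to a sum over $\dzin{m}{u}\cap\dzin{n}{v}$, show this intersection is either empty or equal to $\dzin{m}{u}$ (for $m\Le n$) with the telescoping identity $\lambda_{v|w}=\lambda_{v|u}\lambda_{u|w}$ collapsing the sum to $\overline{\lambda_{v|u}}\,\|\slam^m e_u\|^2$, and then pass to the limit using (ii) for the finite product and (iii) for the norm. The observation that no dominated-convergence argument is needed because the sum collapses to a single product is exactly the point of the paper's Step~1.
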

   \begin{proof} We split the proof into two steps.

{\sc Step 1.} If $\lambdab=\{\lambda_v\}_{v\in
V^\circ}$ is a family of complex numbers such that
$\escr \subseteq \dzn{\slam}$, then for all $m,n \in
\zbb_+$ and $u,v \in V$,
   \begin{align} \label{smsn}
\is{\slam^m e_u}{\slam^n e_v} =
   \begin{cases}
0 & \text{ if } \cc^{m,n}(u,v) = \varnothing,
   \\
\overline{\lambda_{v|u}} \, \|\slam^m e_u\|^2 &
\text{ if } \cc^{m,n}(u,v) \neq \varnothing \text{
and } m\Le n,
   \\
\lambda_{u|v} \, \|\slam^n e_v\|^2 & \text{ if }
\cc^{m,n}(u,v) \neq \varnothing \text{ and } m >n,
   \end{cases}
   \end{align}
where $\cc^{m,n}(u,v) := \dzin{m}{u} \cap
\dzin{n}{v}$.

Indeed, it follows from Lemma \ref{lem4} that
   \begin{align}
   \begin{aligned}  \label{slmsln}
\is{\slam^m e_u}{\slam^n e_v} & =
\Big\langle\sum_{u^\prime \in \dzin{m}{u}}
\lambda_{u\mid u^\prime} \, e_{u^\prime},
\sum_{v^\prime \in \dzin{n}{v}} \lambda_{v\mid
v^\prime} \, e_{v^\prime}\Big\rangle
   \\
& = \sum_{u^\prime \in \cc^{m,n}(u,v)} \lambda_{u\mid
u^\prime} \overline{\lambda_{v\mid u^\prime}}.
   \end{aligned}
   \end{align}
Hence, if $\cc^{m,n}(u,v) = \varnothing$, then
the left-hand side of \eqref{smsn} is equal to
$0$ as required. Suppose now that $\cc^{m,n}(u,v)
\neq \varnothing$ and $m \Le n$. Then
   \begin{align} \label{num8}
\cc^{m,n}(u,v)=\dzin{m}{u}.
   \end{align}
To show this, take $w \in \cc^{m,n}(u,v)$. Then, by
\eqref{num4}, $u=\paa^m(w)$ and
   \begin{align*}
v = \paa^{n}(w) = \paa^{n-m}(\paa^m(w)) =
\paa^{n-m}(u),
   \end{align*}
which, by \eqref{num4} again, is equivalent to
   \begin{align} \label{num7}
u \in \dzin{n-m}{v}.
   \end{align}
This implies that
   \begin{align} \label{num6}
\dzin{m}{u} \subseteq \dzin{m}{\dzin{n-m}{v}}
\overset{\eqref{chmn}}{=} \dzin{n}{v}.
   \end{align}
Thus \eqref{num8} holds. Next, we show that
   \begin{align} \label{num5}
\lambda_{v\mid u^\prime} = \lambda_{u\mid u^\prime}
\lambda_{v|u}, \quad u^\prime \in \dzin{m}{u}.
   \end{align}
It is enough to consider the case where $m\Ge 1$
and $n > m$. Since $u^\prime \in \dzin{m}{u}$, we
infer from \eqref{num6} that $u^\prime \in
\dzin{n}{v}$. Moreover, by \eqref{num7}, $u \in
\dzin{n-m}{v}$. All these facts together with
\eqref{luv} imply that
   \begin{multline*}
\lambda_{v|u^\prime} = \prod_{j=0}^{n-1}
\lambda_{\paa^{j}(u^\prime)} = \prod_{j=0}^{m-1}
\lambda_{\paa^{j}(u^\prime)} \prod_{j=m}^{n-1}
\lambda_{\paa^{j}(u^\prime)}
   \\
\overset{\eqref{luv}}= \lambda_{u\mid u^\prime}
\prod_{j=0}^{n-m-1}
\lambda_{\paa^{j}(\paa^{m}(u^\prime))}
\overset{\eqref{num4}}= \lambda_{u\mid u^\prime}
\prod_{j=0}^{n-m-1} \lambda_{\paa^{j}(u)}
\overset{\eqref{luv}}= \lambda_{u\mid u^\prime}
\lambda_{v|u},
   \end{multline*}
which completes the proof of \eqref{num5}. Now
applying \eqref{slmsln}, \eqref{num8}, \eqref{num5}
and Lemma \ref{lem4}\,(iii), we obtain
   \allowdisplaybreaks
   \begin{align*}
\is{\slam^m e_u}{\slam^n e_v} & = \sum_{u^\prime \in
\dzin{m}{u}} \lambda_{u\mid u^\prime}
\overline{\lambda_{v\mid u^\prime}}
   \\
& \hspace{-2.2ex} \overset{\eqref{num5}}=
\overline{\lambda_{v|u}} \sum_{u^\prime \in
\dzin{m}{u}} |\lambda_{u\mid u^\prime}|^2 =
\overline{\lambda_{v|u}} \, \|\slam^m e_u\|^2.
   \end{align*}
Taking the complex conjugate and making appropriate
substitutions, we infer from the above that
$\is{\slam^m e_u}{\slam^n e_v} = \lambda_{u|v} \,
\|\slam^n e_v\|^2$ if $\cc^{m,n}(u,v) \neq
\varnothing$ and $m
>n$, which completes the proof of Step 1.

{\sc Step 2.} Under the assumptions of Proposition
\ref{potegi}, equality \eqref{slim+} holds.

Indeed, it follows from (ii) that
   \begin{align} \label{wzj+}
\lim_{i \to \infty} \lambdai{u\mid v} =
\lambda_{u\mid v}, \quad u \in V, v \in \des{u},
   \end{align}
where $\{\lambdai{u\mid v}\}_{u \in V, v \in
\des{u}}$ is the family related to
$\big\{\lambdai{v}\big\}_{v \in V^\circ}$ via
\eqref{luv}. Now, applying Step 1 to the
operators $S_{\lambdabi}$ and $\slam$ (which is
possible due to (i)) and using \eqref{wzj+} and
(iii), we obtain \eqref{slim+}.
   \end{proof}
   \subsection{A consistency condition}
The following is an immediate consequence of
Proposition \ref{necess-gen}.
   \begin{pro}\label{necess}
Let $\slam$ be a weighted shift on a directed tree
$\tcal$ with weights $\lambdab=\{\lambda_v\}_{v \in
V^\circ}$ such that $\escr \subseteq \dzn{\slam}$. If
$\slam$ is subnormal, then for every $u \in V$ the
sequence $\{\|\slam^n e_u\|^2\}_{n=0}^\infty$ is a
Stieltjes moment sequence.
   \end{pro}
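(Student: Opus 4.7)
The plan is to observe that this proposition is essentially a specialization of Proposition \ref{necess-gen} to the particular $C^\infty$-vectors $e_u$ of a weighted shift on a directed tree. The hypothesis $\escr \subseteq \dzn{\slam}$ is exactly what is needed to place the basis vectors into the domain of every power of $\slam$, so they become admissible inputs to the general result.

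Concretely, first I would fix $u \in V$ and use the assumption $\escr = \lin\{e_v\colon v \in V\} \subseteq \dzn{\slam}$ to conclude that $e_u \in \dzn{\slam}$. Second, since $\slam$ is densely defined and subnormal by hypothesis, Proposition \ref{necess-gen} applies and gives $\dzn{\slam} = \sti{\slam}$; equivalently, every $C^\infty$-vector of $\slam$ generates a Stieltjes moment sequence under the map $f \mapsto \{\|\slam^n f\|^2\}_{n=0}^\infty$. Third, specializing to $f = e_u$ immediately yields that $\{\|\slam^n e_u\|^2\}_{n=0}^\infty$ is a Stieltjes moment sequence, which is the conclusion of the proposition.

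There is essentially no technical obstacle here; the only thing worth pointing out is that Proposition \ref{necess-gen} requires $\slam$ to be densely defined, but this is guaranteed by $\escr \subseteq \dz{\slam}$ together with Proposition \ref{bas}\,(iii). Thus the entire argument fits comfortably into a one-sentence proof: the hypothesis $\escr \subseteq \dzn{\slam}$ ensures $e_u \in \dzn{\slam} = \sti{\slam}$ for all $u \in V$, and the definition of $\sti{\slam}$ is precisely the desired Stieltjes moment conclusion.
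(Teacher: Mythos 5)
Your proposal is correct and coincides with the paper's treatment: the paper states Proposition \ref{necess} as an immediate consequence of Proposition \ref{necess-gen}, which is exactly the specialization to $f=e_u$ that you carry out. Your extra remark that dense definiteness follows from $\escr\subseteq\dz{\slam}$ via Proposition \ref{bas}\,(iii) is a harmless and accurate addition.
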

The converse of the implication in Proposition
\ref{necess} is valid for bounded weight\-ed shifts on
directed trees.
   \begin{thm}[\mbox{\cite[Theorem 6.1.3]{j-j-s}}]
\label{charsub}
   Let $\slam \in \ogr{\ell^2(V)}$ be a weighted shift
on a directed tree $\tcal$ with weights $\lambdab =
\{\lambda_v\}_{v \in V^\circ}$. Then $\slam$ is
subnormal if and only if $\{\|\slam^n
e_u\|^2\}_{n=0}^\infty$ is a Stieltjes moment sequence
for every $u \in V$.
   \end{thm}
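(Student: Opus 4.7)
The plan is to deduce the forward implication directly from Proposition \ref{necess}, and to establish the converse by invoking Lambert's characterization of bounded subnormal operators (cf.\ \cite{Lam}): an operator $T \in \ogr{\hh}$ is subnormal if and only if $\{\|T^n h\|^2\}_{n=0}^\infty$ is a Stieltjes moment sequence for every $h \in \hh$. The forward direction is immediate since $\slam \in \ogr{\ell^2(V)}$ forces $\escr \subseteq \dzn{\slam}$. The task therefore reduces to showing that the moment condition imposed merely on the basis vectors $\{e_u\}_{u \in V}$ already forces the moment condition at every $h \in \ell^2(V)$.

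The crucial observation is an orthogonality on powers: for distinct $u, w \in V$ and any $n \in \zbb_+$, the identity \eqref{num4} shows that $\dzin{n}{u} \cap \dzin{n}{w} = \varnothing$ (a common element would have both $u$ and $w$ as its $n$-th iterated parent), and hence Lemma \ref{lem4}(ii) yields $\is{\slam^n e_u}{\slam^n e_w} = 0$. Let $\mu_u$ be a representing measure of $\{\|\slam^n e_u\|^2\}_{n=0}^\infty$; since its zeroth moment equals $\|e_u\|^2 = 1$, each $\mu_u$ is a Borel probability measure on $\rbb_+$.

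For an arbitrary $h = \sum_{u \in V} c_u e_u \in \ell^2(V)$, boundedness of $\slam$ permits term-by-term application, and the above orthogonality yields the Pythagorean identity
\begin{align*}
\|\slam^n h\|^2 = \sum_{u \in V} |c_u|^2 \|\slam^n e_u\|^2, \quad n \in \zbb_+.
\end{align*}
Setting $\mu_h := \sum_{u \in V} |c_u|^2 \mu_u$ defines a finite positive Borel measure on $\rbb_+$ with total mass $\|h\|^2$, and monotone convergence gives
\begin{align*}
\int_0^\infty t^n \D\mu_h(t) = \sum_{u \in V} |c_u|^2 \int_0^\infty t^n \D\mu_u(t) = \|\slam^n h\|^2, \quad n \in \zbb_+.
\end{align*}
Thus $\{\|\slam^n h\|^2\}_{n=0}^\infty$ is a Stieltjes moment sequence for every $h \in \ell^2(V)$, and Lambert's theorem delivers subnormality of $\slam$.

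The main technical point is the orthogonality $\is{\slam^n e_u}{\slam^n e_w} = 0$ for $u \neq w$; this is the tree-theoretic ingredient that lets moment information at basis vectors assemble into moment information at every vector. Everything else is a routine superposition of probability measures followed by an appeal to Lambert's characterization.
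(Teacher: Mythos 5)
Your proof is correct. The paper states Theorem \ref{charsub} without proof, importing it from \cite[Theorem 6.1.3]{j-j-s}; your argument --- the orthogonality $\is{\slam^n e_u}{\slam^n e_w}=0$ for $u\neq w$ coming from the disjointness of $\dzin{n}{u}$ and $\dzin{n}{w}$, the resulting Pythagorean decomposition $\|\slam^n h\|^2=\sum_{u\in V}|c_u|^2\|\slam^n e_u\|^2$, the superposition $\sum_{u\in V}|c_u|^2\mu_u$ of representing measures, and an appeal to Lambert's characterization --- is exactly the standard proof given in that reference, so there is nothing to correct.
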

   The case of unbounded weighted shifts is discussed
in Theorem \ref{main-0}.

If $\slam$ is a subnormal weighted shift on a directed
tree $\tcal$, then in view of Proposition \ref{necess}
we can attach to each vertex $u \in V$ a representing
measure $\mu_u$ of the Stieltjes moment sequence
$\{\|\slam^n e_u\|^2\}_{n=0}^\infty$ (of course, since
the sequence $\{\|\slam^n e_u\|^2\}_{n=0}^\infty$ is
not determinate in general, we have to choose one of
them); note that any such $\mu_u$ is a probability
measure. Hence, it is tempting to find relationships
between these representing measures. This has been
done in the case of bounded weighted shifts in
\cite[Lemma 6.1.10]{j-j-s}. What is stated below is an
adaptation of this lemma (and its proof) to the
unbounded case. As opposed to the bounded case,
implication $1^\circ \Rightarrow 2^\circ$ of Lemma
\ref{charsub2} below is not true in general (cf.\
\cite{j-j-s4}).
   \begin{lem} \label{charsub2}
Let $\slam$ be a weighted shift on a directed tree
$\tcal$ with weights $\lambdab=\{\lambda_v\}_{v \in
V^\circ}$ such that $\escr \subseteq \dzn{\slam}$. Let
$u \in V^\prime$. Suppose that for every $v \in \dzi u$
the sequence $\{\|\slam^n e_v\|^2\}_{n=0}^\infty$ is a
Stieltjes moment sequence with a representing measure
$\mu_v$. Consider the following two
conditions\,\footnote{\;\;We adhere to the standard
convention that $0 \cdot \infty = 0$; see also footnote
\ref{foot}.}{\em :}
   \begin{enumerate}
   \item[$1^\circ$] $\{\|\slam^n e_u\|^2\}_{n=0}^\infty$
is a Stieltjes moment sequence,
   \item[$2^\circ$]   $\slam$ satisfies
the consistency condition at the vertex $u$, i.e.,
   \begin{align} \label{alanconsi}
\sum_{v \in \dzi{u}} |\lambda_v|^2 \int_0^\infty \frac
1 s\, \D \mu_v(s) \Le 1.
   \end{align}
   \end{enumerate}
   Then the following assertions are valid{\em :}
   \begin{enumerate}
   \item[(i)] if $2^\circ$ holds, then so does
$1^\circ$ and the positive Borel measure $\mu_u$ on
$\rbb_+$ defined by
   \begin{align} \label{muu+}
\mu_u(\sigma) = \sum_{v \in \dzi u} |\lambda_v|^2
\int_\sigma \frac 1 s \D \mu_v(s) + \varepsilon_u
\delta_0(\sigma), \quad \sigma \in \borel{\rbb_+},
      \end{align}
with
   \begin{align} \label{muu++}
\varepsilon_u=1 - \sum_{v \in \dzi u} |\lambda_v|^2
\int_0^\infty \frac 1 s \D \mu_v(s)
   \end{align}
is a representing measure of $\{\|\slam^n
e_u\|^2\}_{n=0}^\infty$,
   \item[(ii)] if $1^\circ$ holds and $\{\|\slam^{n+1}
e_u\|^2\}_{n=0}^\infty$ is determinate, then $2^\circ$
holds, the Stieltjes moment sequence $\{\|\slam^n
e_u\|^2\}_{n=0}^\infty$ is determinate and its unique
representing measure $\mu_u$ is given by \eqref{muu+}
and \eqref{muu++}.
   \end{enumerate}
   \end{lem}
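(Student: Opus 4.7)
The plan is to reduce the lemma to Lemma \ref{bext} applied to the sequence $\{\|\slam^{n+1} e_u\|^2\}_{n=0}^\infty$ with extension parameter $\vartheta = \|e_u\|^2 = 1$.

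First, by Lemma \ref{lem4}(iii), the partition \eqref{dzinn2}, and the multiplicativity \eqref{recfor2}, one establishes the fundamental recursion
\begin{align*}
\|\slam^{n+1} e_u\|^2 = \sum_{v \in \dzi u} |\lambda_v|^2 \, \|\slam^n e_v\|^2, \quad n \in \zbb_+.
\end{align*}
Since $\sum_{v \in \dzi u} |\lambda_v|^2 = \|\slam e_u\|^2 < \infty$, only countably many $\lambda_v$ are nonzero, so the positive Borel measure $\nu := \sum_{v \in \dzi u} |\lambda_v|^2 \mu_v$ is well defined and finite. Tonelli's theorem then yields $\int_0^\infty s^n \D\nu(s) = \|\slam^{n+1} e_u\|^2$ for all $n \in \zbb_+$, so $\{\|\slam^{n+1} e_u\|^2\}_{n=0}^\infty$ is a Stieltjes moment sequence with representing measure $\nu$. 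The same interchange of sum and integral (with the convention $0 \cdot \infty = 0$) gives $\int_0^\infty \frac{1}{s} \D\nu(s) = \sum_{v \in \dzi u} |\lambda_v|^2 \int_0^\infty \frac{1}{s} \D\mu_v(s)$, so condition $2^\circ$ is exactly the inequality $\int_0^\infty \frac{1}{s} \D\nu(s) \Le 1$.

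With these identifications, assertion (i) is immediate: $2^\circ$ is condition (iii) of Lemma \ref{bext} for the sequence $\{\|\slam^{n+1} e_u\|^2\}$, $\vartheta = 1$, and representing measure $\nu$; hence $\{\|\slam^n e_u\|^2\}_{n=0}^\infty$ is a Stieltjes moment sequence, and a direct substitution into \eqref{nu} identifies its representing measure (produced by Lemma \ref{bext}) with the measure described by \eqref{muu+}--\eqref{muu++}. For (ii), condition $1^\circ$ together with \eqref{st+1} gives that both $\{\|\slam^n e_u\|^2\}$ and $\{\|\slam^{n+1} e_u\|^2\}$ are Stieltjes moment sequences; by hypothesis the latter is determinate, so $\nu$ is its only representing measure. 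The ``moreover'' paragraph of Lemma \ref{bext} then delivers simultaneously the inequality $\int_0^\infty \frac{1}{s} \D\nu(s) \Le 1$ (i.e.\ condition $2^\circ$), the determinacy of $\{\|\slam^n e_u\|^2\}$, and the uniqueness of its representing measure in the form \eqref{muu+}--\eqref{muu++}.

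The main (essentially notational) obstacle is keeping careful track of the off-by-one shift between the ``known'' sequence $\{\|\slam^{n+1} e_u\|^2\}$ and the ``backward-extended'' sequence $\{\|\slam^n e_u\|^2\}$, together with the conventions $\frac{1}{0} := \infty$ and $0 \cdot \infty := 0$ needed when some weight $\lambda_v$ vanishes or when $\mu_v(\{0\}) > 0$. No countability hypothesis on $\dzi u$ is required, since terms with $\lambda_v = 0$ contribute nothing to $\nu$ or to either side of the key identities.
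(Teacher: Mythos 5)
Your proposal is correct and follows essentially the same route as the paper: both establish the recursion $\|\slam^{n+1}e_u\|^2=\sum_{v\in\dzi u}|\lambda_v|^2\|\slam^n e_v\|^2$ via Lemma \ref{lem4}\,(iii), \eqref{dzinn2} and \eqref{recfor2}, form the aggregate measure $\sum_{v\in\dzi u}|\lambda_v|^2\mu_v$ as a representing measure of $\{\|\slam^{n+1}e_u\|^2\}_{n=0}^\infty$, and then invoke Lemma \ref{bext} with $\vartheta=1$ (the equivalence (i)$\Leftrightarrow$(iii) for assertion (i), and the determinacy clause for assertion (ii)). The bookkeeping with the conventions $\frac10=\infty$ and $0\cdot\infty=0$ is handled as in the paper.
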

   \begin{proof}
Define the positive Borel measure $\mu$ on $\rbb_+$ by
   \begin{align*}
\mu(\sigma) = \sum_{v \in \dzi u} |\lambda_v|^2
\mu_v(\sigma), \quad \sigma \in \borel{\rbb_+}.
   \end{align*}
It is a matter of routine to show that
   \begin{align}  \label{leb2}
\int_0^\infty f \D \mu = \sum_{v \in \dzi u}
|\lambda_v|^2 \int_0^\infty f \D \mu_v
   \end{align}
for every Borel function $f\colon {[0,\infty)}
\to [0,\infty]$. Using the inclusion $\escr
\subseteq \dzn{\slam}$ and applying Lemma
\ref{lem4}\,(iii) twice, we obtain
   \allowdisplaybreaks
   \begin{align*}
\|\slam^{n+1} e_u\|^2 & \hspace{2.2ex}= \sum_{w \in
\dzin{n+1}{u}} |\lambda_{u\mid w}|^2
   \\
& \hspace{.4ex} \overset{\eqref{dzinn2}}= \sum_{v \in
\dzi u} \sum_{w \in \dzin{n} v} |\lambda_{u\mid w}|^2
\notag
   \\
& \hspace{.4ex} \overset{\eqref{recfor2}}= \sum_{v \in
\dzi u} |\lambda_v|^2 \sum_{w \in \dzin{n} v}
|\lambda_{v\mid w}|^2 \notag
   \\
& \hspace{2.2ex} =\sum_{v \in \dzi u} |\lambda_v|^2
\|\slam^n e_v\|^2, \quad n \in \zbb_+. \notag
   \end{align*}
This implies that
   \begin{align*}
\|\slam^{n+1} e_u\|^2 = \sum_{v \in \dzi u}
|\lambda_v|^2 \int_0^\infty s^n \, \D \mu_v(s)
\overset{\eqref{leb2}}= \int_0^\infty s^n \D \mu(s),
\quad n \in \zbb_+.
   \end{align*}
   Hence the sequence $\{\|\slam^{n+1}
e_u\|^2\}_{n=0}^\infty$ is a Stieltjes moment sequence
with a representing measure $\mu$.

Set $t_n = \|\slam^{n+1} e_u\|^2$ for $n \in \zbb_+$,
and $t_{-1}=1$. Note that
   \begin{align*}
t_{n-1}=\|\slam^{n} e_u\|^2, \quad n \in \zbb_+.
   \end{align*}

Suppose that $2^\circ$ holds. Then, by
\eqref{alanconsi} and \eqref{leb2}, we have
$\int_0^\infty \frac 1 s \D \mu(s) \Le 1$.
Applying implication (iii)$\Rightarrow$(i) of
Lemma \ref{bext}, we see that $1^\circ$ holds,
and, by \eqref{leb2}, the measure $\mu_u$ defined
by \eqref{muu+} and \eqref{muu++} is a
representing measure of the Stieltjes moment
sequence $\{\|\slam^n e_u\|^2\}_{n=0}^\infty$.

Suppose now that $1^\circ$ holds and the
Stieltjes moment sequence $\{\|\slam^{n+1}
e_u\|^2\}_{n=0}^\infty$ is determinate. It
follows from implication (i)$\Rightarrow$(iii) of
Lemma \ref{bext} that there is a representing
measure $\mu^\prime$ of $\{\|\slam^{n+1}
e_u\|^2\}_{n=0}^\infty$ such that $\int_0^\infty
\frac 1 s \D \mu^\prime(s) \Le 1$. Since
$\{\|\slam^{n+1} e_u\|^2\}_{n=0}^\infty$ is
determinate, we get $\mu^\prime=\mu$, which
implies $2^\circ$. The remaining part of
assertion (ii) follows from the last assertion of
Lemma \ref{bext}.
   \end{proof}
Now we prove that the determinacy of appropriate
Stieltjes moment sequences attached to a weighted
shift on a directed tree implies the existence of a
consistent system of measures (see also Corollary
\ref{necessdet2}). As shown in \cite{j-j-s4}, Lemma
\ref{2necess+} below is no longer true if the
assumption on determinacy is dropped (though, by Lemma
\ref{lem3}\,(iv), the converse of Lemma \ref{2necess+}
is true without assuming determinacy).
   \begin{lem} \label{2necess+}
 Let $\slam$ be a weighted shift on a directed tree
$\tcal$ with weights $\lambdab=\{\lambda_v\}_{v \in
V^\circ}$ such that $\escr \subseteq \dzn{\slam}$.
Assume that for every $u \in V^\prime$, the sequence
$\{\|\slam^{n} e_u\|^2\}_{n=0}^\infty$ is a Stieltjes
moment sequence, and that the Stieltjes moment
sequence\,\footnote{\;see \eqref{st+1}}
$\{\|\slam^{n+1} e_u\|^2\}_{n=0}^\infty$ is
determinate. Then there exist a system $\{\mu_u\}_{u
\in V}$ of Borel probability measures on $\rbb_+$ and
a system $\{\varepsilon_u\}_{u \in V}$ of nonnegative
real numbers that satisfy \eqref{muu+} for every $u
\in V$.
   \end{lem}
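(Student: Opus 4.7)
The plan is to exploit determinacy to pin down a canonical representing measure $\mu_u$ at every vertex $u \in V$ and then to read off \eqref{muu+} directly from Lemma \ref{charsub2}(ii), handling leaves separately by inspection.

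First, I would construct the system $\{\mu_u\}_{u \in V}$. At every leaf $v \in V \setminus V^\prime$, Proposition \ref{bas}(ii) gives $\slam e_v = 0$, so the moment sequence $\{\|\slam^n e_v\|^2\}_{n=0}^\infty$ is $\{1,0,0,\ldots\}$ and its unique representing measure is forced to be $\mu_v = \delta_0$. At every $v \in V^\prime$, the assumption that $\{\|\slam^n e_v\|^2\}_{n=0}^\infty$ is Stieltjes and that $\{\|\slam^{n+1} e_v\|^2\}_{n=0}^\infty$ is determinate lets me invoke Lemma \ref{charsub2}(ii) at $v$: its umbrella hypothesis (Stieltjes representability at each child $w \in \dzi v$) is met either by hypothesis (when $w \in V^\prime$) or trivially (when $w$ is a leaf), and the lemma's conclusion in particular asserts that $\{\|\slam^n e_v\|^2\}_{n=0}^\infty$ is itself determinate. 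I let $\mu_v$ be its unique representing measure.

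Next, I would verify \eqref{muu+} at each $u \in V$. For $u \in V^\prime$ I invoke Lemma \ref{charsub2}(ii) a second time, now with the specific family $\{\mu_v\}_{v \in \dzi u}$ just selected; the lemma furnishes the formulas \eqref{muu+} and \eqref{muu++} for the unique representing measure of $\{\|\slam^n e_u\|^2\}_{n=0}^\infty$, which by uniqueness coincides with the $\mu_u$ constructed above, and the consistency condition $2^\circ$ guarantees $\varepsilon_u \Ge 0$. For leaves $u \in V \setminus V^\prime$, the convention $\sum_{v\in\varnothing} = 0$ collapses \eqref{muu+} to $\mu_u(\sigma) = \varepsilon_u \delta_0(\sigma)$, and \eqref{muu++} forces $\varepsilon_u = 1$; this recovers $\mu_u = \delta_0$, completing the verification.

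The one subtle point that might look like an obstacle is the apparent circularity in the first step: defining $\mu_v$ at $v \in V^\prime$ via Lemma \ref{charsub2} seems to presuppose the measures $\mu_w$ at its children. The resolution is that Lemma \ref{charsub2}(ii) needs only the existence of \emph{some} representing measures at the children (which follows automatically from the Stieltjes property) in order to establish determinacy of $\{\|\slam^n e_v\|^2\}_{n=0}^\infty$; no particular choice of children measures needs to have been made at that stage. Thus each $\mu_v$ is well-defined before any relation like \eqref{muu+} is invoked, and the whole argument runs without any induction on the depth of the tree.
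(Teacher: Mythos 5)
Your proof is correct and follows essentially the same route as the paper's: set $\mu_u=\delta_0$ at leaves and take $\mu_u$ to be the unique representing measure of $\{\|\slam^{n}e_u\|^2\}_{n=0}^\infty$ at vertices of $V^\prime$ (its determinacy being a consequence of that of $\{\|\slam^{n+1}e_u\|^2\}_{n=0}^\infty$), then read off \eqref{muu+} and \eqref{muu++} from Lemma \ref{charsub2}\,(ii). The only cosmetic difference is that the paper extracts the determinacy of $\{\|\slam^{n}e_u\|^2\}_{n=0}^\infty$ directly from the last assertion of Lemma \ref{bext}, whereas you route it through Lemma \ref{charsub2}\,(ii) after checking its umbrella hypothesis at the children; since that part of Lemma \ref{charsub2} is itself proved via Lemma \ref{bext}, the two arguments coincide.
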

   \begin{proof}
By Lemma \ref{bext}, the Stieltjes moment
sequence $\{\|\slam^n e_u\|^2\}_{n=0}^\infty$ is
determinate for every $u\in V^\prime$. For $u \in
V^\prime$, we denote by $\mu_u$ the unique
representing measure of $\{\|\slam^n
e_u\|^2\}_{n=0}^\infty$. If $u \in V \setminus
V^\prime$, then we put $\mu_u=\delta_0$. Using
Lemma \ref{charsub2}\,(ii), we verify that the
system $\{\mu_u\}_{u \in V}$ satisfies
\eqref{muu+} with $\{\varepsilon_u\}_{u \in V}$
defined by \eqref{muu++}. This completes the
proof.
   \end{proof}
   \subsection{A hereditary property} Given a weighted
shift $\slam$ on $\tcal$, we say that a vertex $u
\in V$ {\em generates} a Stieltjes moment
sequence (with respect to $\slam$) if $e_u \in
\dzn{\slam}$ and the sequence $\{\|\slam^n
e_u\|^2\}_{n=0}^\infty$ is a Stieltjes moment
sequence. We have shown in Lemma \ref{charsub2}
that in many cases the parent generates a
Stieltjes moment sequence whenever its children
do so. If the parent generates a Stieltjes moment
sequence, then in general its children do not do
so (cf.\ \cite[Example 6.1.6]{j-j-s}). However,
if the parent has only one child and generates a
Stieltjes moment sequence, then its child does
so.
   \begin{lem} \label{charsub-1}
Let $\slam$ be a weighted shift on a directed tree
$\tcal$ with weights $\lambdab = \{\lambda_v\}_{v \in
V^\circ}$ and let $u_0, u_1 \in V$ be such that
$\dzi{u_0} = \{u_1\}$. Suppose that $e_{u_0} \in
\dzn{\slam}$, $\{\|\slam^n e_{u_0}\|^2\}_{n=0}^\infty$
is a Stieltjes moment sequence and $\lambda_{u_1}\neq
0$. Then $e_{u_1} \in \dzn{\slam}$ and $\{\|\slam^n
e_{u_1}\|^2\}_{n=0}^\infty$ is a Stieltjes moment
sequence. Moreover, the following assertions
hold\/{\em :}
   \begin{enumerate}
   \item[(i)] the mapping
$\mm_{u_1}^{\mathrm b}(\lambdab) \ni \mu \to
\rho_{\mu} \in \mm_{u_0}(\lambdab)$ defined by
   \begin{align*}
\rho_{\mu}(\sigma) = |\lambda_{u_1}|^2 \int_\sigma
\frac 1 s \D \mu(s) + \Big(1 - |\lambda_{u_1}|^2
\int_0^\infty \frac 1 s \D \mu(s)\Big)
\delta_0(\sigma), \quad \sigma \in \borel{\rbb_+},
   \end{align*}
is a bijection with the inverse $\mm_{u_0}(\lambdab)
\ni \rho \to \mu_{\rho} \in \mm_{u_1}^{\mathrm
b}(\lambdab)$ given by
   \begin{align*}
\mu_{\rho} ( \sigma) = \frac 1 {|\lambda_{u_1}|^2}
\int_\sigma s \D \rho (s), \quad \sigma \in
\borel{\rbb_+},
   \end{align*}
where $\mm_{u_1}^{\mathrm b}(\lambdab)$ is the set of
all representing measures $\mu$ of $\{\|\slam^n
e_{u_1}\|^2\}_{n=0}^\infty$ such that $\int_0^\infty
\frac 1 s \D \mu(s) \Le \frac{1}{|\lambda_{u_1}|^2}$,
and $\mm_{u_0}(\lambdab)$ is the set of all
representing measures $\rho$ of $\{\|\slam^n
e_{u_0}\|^2\}_{n=0}^\infty$,
   \item[(ii)] if the Stieltjes moment
sequence $\{\|\slam^n e_{u_1}\|^2\}_{n=0}^\infty$ is
determinate, then so are $\{\|\slam^{n+1}
e_{u_0}\|^2\}_{n=0}^\infty$ and $\{\|\slam^n
e_{u_0}\|^2\}_{n=0}^\infty$.
   \end{enumerate}
   \end{lem}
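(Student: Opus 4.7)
The plan is to reduce everything to Lemma \ref{bext} via the single--child relation $\|\slam^{n+1} e_{u_0}\|^2 = |\lambda_{u_1}|^2\,\|\slam^{n} e_{u_1}\|^2$ together with the trivial scaling of Stieltjes moment sequences.

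First I would establish the forward domain / norm formula. Since $\dzi{u_0}=\{u_1\}$, one has $\dzin{m+1}{u_0}=\dzin{m}{u_1}$ and $\lambda_{u_0\mid v}=\lambda_{u_1}\lambda_{u_1\mid v}$ for $v\in \dzin{m}{u_1}$ by \eqref{num2}. Substituting into Lemma \ref{lem4}\,(i) and using $\lambda_{u_1}\neq 0$ gives $e_{u_0}\in\dz{\slam^{n+1}}$ if and only if $e_{u_1}\in\dz{\slam^{n}}$, so in particular $e_{u_1}\in\dzn{\slam}$. The same substitution in Lemma \ref{lem4}\,(iii) yields the key identity
\begin{align*}
\|\slam^{n+1} e_{u_0}\|^2 \;=\; |\lambda_{u_1}|^2\,\|\slam^{n} e_{u_1}\|^2, \quad n\in\zbb_+.
\end{align*}

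Next I would check the Stieltjes moment condition for $u_1$. By \eqref{st+1}, $\{\|\slam^{n+1} e_{u_0}\|^2\}_{n=0}^\infty$ is a Stieltjes moment sequence, and dividing by $|\lambda_{u_1}|^2>0$ shows that $\{\|\slam^{n} e_{u_1}\|^2\}_{n=0}^\infty$ is one as well. Moreover, the map $\mu \mapsto |\lambda_{u_1}|^2\mu$ is a bijection between the representing measures of $\{\|\slam^{n} e_{u_1}\|^2\}_{n=0}^\infty$ and those of $\{\|\slam^{n+1} e_{u_0}\|^2\}_{n=0}^\infty$.

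The third step is to invoke Lemma \ref{bext} with the Stieltjes moment sequence $t_n := \|\slam^{n+1} e_{u_0}\|^2$ and the backward datum $\vartheta = \|e_{u_0}\|^2 = 1$, so that $\{t_{n-1}\}_{n=0}^\infty = \{\|\slam^{n} e_{u_0}\|^2\}_{n=0}^\infty$. Lemma \ref{bext} provides a bijection $\mm_0(1)\ni \tilde\mu\mapsto \nu_{\tilde\mu}\in\mm_{-1}(1)$ with inverse given by \eqref{mu}. Translating along the scaling $\tilde\mu = |\lambda_{u_1}|^2\mu$ identifies $\mm_0(1)$ with $\mm^{\mathrm b}_{u_1}(\lambdab)$ (the constraint $\int \frac 1 s\D\tilde\mu\le 1$ becomes $\int \frac 1 s\D\mu\le |\lambda_{u_1}|^{-2}$) and $\mm_{-1}(1)$ with $\mm_{u_0}(\lambdab)$. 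Substituting this scaling into formulas \eqref{nu} and \eqref{mu} of Lemma \ref{bext} produces exactly the expressions for $\rho_\mu$ and $\mu_\rho$ displayed in assertion (i).

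Finally, for (ii), if $\{\|\slam^{n} e_{u_1}\|^2\}_{n=0}^\infty$ is determinate then so is its scalar multiple $\{\|\slam^{n+1} e_{u_0}\|^2\}_{n=0}^\infty$, and the determinacy clause of Lemma \ref{bext} then gives the determinacy of $\{\|\slam^{n} e_{u_0}\|^2\}_{n=0}^\infty$. I do not anticipate a serious obstacle; the only point that needs care is the bookkeeping with the scaling factor $|\lambda_{u_1}|^2$ when passing between the sets $\mm_0(1)$, $\mm_{-1}(1)$, $\mm^{\mathrm b}_{u_1}(\lambdab)$ and $\mm_{u_0}(\lambdab)$ and, relatedly, verifying that the inequality defining $\mm^{\mathrm b}_{u_1}(\lambdab)$ corresponds precisely to $\vartheta=1$ in Lemma \ref{bext}.
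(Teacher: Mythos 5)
Your proposal is correct and follows essentially the same route as the paper: both rest on the identity $\|\slam^{n+1}e_{u_0}\|^2=|\lambda_{u_1}|^2\|\slam^{n}e_{u_1}\|^2$ and then apply Lemma \ref{bext} with $\vartheta=1$ and $t_n=\|\slam^{n+1}e_{u_0}\|^2$, tracking the scaling by $|\lambda_{u_1}|^2$ through \eqref{nu} and \eqref{mu}. The only cosmetic difference is that the paper gets $e_{u_1}\in\dzn{\slam}$ and the key identity in one line from \eqref{eu} (since $\slam e_{u_0}=\lambda_{u_1}e_{u_1}$), whereas you derive them via Lemma \ref{lem4}; the product relation you use is \eqref{recfor2} rather than \eqref{num2}, but the fact itself is correct.
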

   \begin{proof}  Since $e_{u_0} \in
\dzn{\slam}$, $\dzi{u_0} = \{u_1\}$ and $\lambda_{u_1}
\neq 0$, we infer from \eqref{eu} that $e_{u_1} =
\frac{1}{\lambda_{u_1}} \slam e_{u_0} \in \dzn{\slam}$
and thus
  \begin{align*}
\|\slam^n e_{u_1}\|^2 = \frac 1
{|\lambda_{u_1}|^2}\|\slam^{n+1} e_{u_0}\|^2, \quad n
\in \zbb_+.
   \end{align*}
The last equality and Lemma \ref{bext} applied to
$\vartheta=1$ and $t_n=\|\slam^{n+1} e_{u_0}\|^2$ ($n
\in \zbb_+$) complete the proof.
   \end{proof}
   \section{Criteria for Subnormality of
Weighted Shifts}
   \subsection{Consistent systems of measures}
In this section we prove some important
properties of consistent systems of Borel
probability measures on $\rbb_+$ attached to a
directed tree. They will be used in the proof of
Theorem \ref{main}.
   \begin{lem} \label{lem1}
Let $\tcal$ be a directed tree. Suppose that
$\{\lambda_v\}_{v \in V^\circ}$ is a system of
complex numbers, $\{\varepsilon_v\}_{v \in V}$ is
a system of nonnegative real numbers and
$\{\mu_v\}_{v \in V}$ is a system of Borel
probability measures on $\rbb_+$ satisfying
\eqref{muu+} for every $u \in V$. Then the
following assertions hold\/{\em :}
   \begin{enumerate}
   \item[(i)] for every $u \in V$,
$\sum_{v\in \dzi{u}} |\lambda_v|^2 \int_0^\infty \frac
1 s \D \mu_v(s) \Le 1$ and
   \begin{align*}
\varepsilon_u = 1 - \sum_{v\in \dzi{u}} |\lambda_v|^2
\int_0^\infty \frac 1 s \D \mu_v(s),
   \end{align*}
   \item[(ii)] for every $u \in V$,  $\mu_u(\{0\})= 0$
if and only if $\varepsilon_u=0$,
   \item[(iii)] for every $v \in V^\circ$, if $\lambda_v \neq
0$, then $\mu_v(\{0\})=0$,
   \item[(iv)]  for every $u \in V$,
   \begin{align}  \label{wz2}
\mu_u(\sigma) = \sum_{v\in \dzin{n}{u}}
|\lambda_{u\mid v}|^2 \int_{\sigma} \frac 1 {s^n} \D
\mu_v(s) + \varepsilon_u \delta_0(\sigma), \quad
\sigma \in \borel{\rbb_+}, \, n \Ge 1.
   \end{align}
   \end{enumerate}
   \end{lem}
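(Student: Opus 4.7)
The four assertions fall naturally into a short chain, with (iv) being the only step that requires genuine work. Throughout, the only hypothesis is the consistency identity \eqref{muu+}, plus the fact that each $\mu_u$ is a probability measure.

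For (i), I would simply set $\sigma = \rbb_+$ in \eqref{muu+}. Since $\mu_u(\rbb_+) = 1 = \delta_0(\rbb_+)$, this immediately gives both the inequality $\sum_{v \in \dzi u}|\lambda_v|^2 \int_0^\infty \frac{1}{s}\D\mu_v(s) \Le 1$ and the exact formula for $\varepsilon_u$.

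For (iii), fix $v \in V^\circ$ with $\lambda_v \neq 0$ and put $u = \paa(v)$, so $v \in \dzi u$. By (i) applied to $u$, the single term $|\lambda_v|^2 \int_0^\infty \frac{1}{s}\D\mu_v(s)$ is at most $1$, hence finite. Because $|\lambda_v|^2 > 0$ we get $\int_0^\infty \frac{1}{s}\D\mu_v(s) < \infty$, which by the footnote convention forces $\mu_v(\{0\}) = 0$. Assertion (ii) then follows by setting $\sigma = \{0\}$ in \eqref{muu+}: for each $v \in \dzi u$ either $\lambda_v = 0$ (so $|\lambda_v|^2 = 0$) or $\lambda_v \neq 0$ (so $\mu_v(\{0\}) = 0$ by (iii)), and in both cases the convention $0 \cdot \infty = 0$ kills $|\lambda_v|^2 \int_{\{0\}} \frac{1}{s}\D\mu_v(s)$. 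Thus $\mu_u(\{0\}) = \varepsilon_u$, which is equivalent to (ii).

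For (iv), I would induct on $n$. The base case $n=1$ is \eqref{muu+} itself, since $\dzin{1}{u} = \dzi u$ and $\lambda_{u|v} = \lambda_v$ for $v \in \dzi u$ by \eqref{luv}. For the inductive step, assume \eqref{wz2} at level $n$. For each $v \in \dzin{n}{u}$ with $\lambda_{u|v} \neq 0$, every factor of the product defining $\lambda_{u|v}$ is nonzero; in particular $\lambda_v \neq 0$, so by (iii) $\mu_v(\{0\}) = 0$ and by (ii) $\varepsilon_v = 0$. Consequently, substituting the consistency identity \eqref{muu+} for $\mu_v$ inside $\int_\sigma \frac{1}{s^n}\D\mu_v(s)$ kills the $\varepsilon_v \delta_0$ term (either by $\varepsilon_v = 0$ or by the factor $|\lambda_{u|v}|^2 = 0$ outside), and one is left with
\begin{align*}
|\lambda_{u|v}|^2 \int_\sigma \frac{1}{s^n}\D\mu_v(s) = \sum_{w \in \dzi v} |\lambda_{u|v}|^2 |\lambda_w|^2 \int_\sigma \frac{1}{s^{n+1}}\D\mu_w(s).
\end{align*}
Using the multiplicative relation \eqref{num2} this simplifies to $\sum_{w \in \dzi v} |\lambda_{u|w}|^2 \int_\sigma \frac{1}{s^{n+1}}\D\mu_w(s)$, and summing over $v \in \dzin{n}{u}$ while invoking the partition \eqref{num1} collapses the double sum to a single sum over $w \in \dzin{n+1}{u}$, yielding \eqref{wz2} with $n+1$ in place of $n$.

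The only nontrivial bookkeeping is in (iv): the careful handling of the $\varepsilon_v \delta_0$ terms inside $\int_\sigma \frac{1}{s^n}\D\mu_v(s)$, where the integrand blows up at $0$. This is where (ii), (iii), and the convention $0 \cdot \infty = 0$ must work in concert so that either the outside coefficient $|\lambda_{u|v}|^2$ or the inside weight $\varepsilon_v$ is zero — it is precisely because no $v \in \dzin{n}{u}$ can satisfy both $\lambda_{u|v} \neq 0$ and $\varepsilon_v > 0$ that the iteration closes cleanly.
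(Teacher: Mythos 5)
Your proposal is correct and follows essentially the same route as the paper: evaluate \eqref{muu+} at $\sigma=\rbb_+$ and $\sigma=\{0\}$ for (i)--(iii), then induct on $n$ for (iv), collapsing the double sum via \eqref{num1} and \eqref{num2}. The only cosmetic difference is that you kill the $\varepsilon_v\delta_0$ terms in the inductive step by invoking (ii) and (iii) (via $\lambda_{u\mid v}\neq 0\Rightarrow\lambda_v\neq 0$), whereas the paper deduces $\varepsilon_v=0$ for $\lambda_{u\mid v}\neq 0$ directly from the finiteness of $\mu_u$; both are valid.
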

   \begin{proof}
   (i) Substitute $\sigma = \rbb_+$ into \eqref{muu+}
and note that $\mu_u(\rbb_+)=1$.

   (ii) \& (iii) Substitute $\sigma = \{0\}$ into
\eqref{muu+}.

   (iv) We use induction on $n$. The case of $n=1$
coincides with \eqref{muu+}. Suppose that \eqref{wz2}
is valid for a fixed integer $n\Ge 1$. Then combining
\eqref{muu+} with \eqref{wz2}, we see that
   \begin{multline} \label{wz3}
\mu_u(\sigma) = \sum_{v\in \dzin{n}{u}}
|\lambda_{u\mid v}|^2 \sum_{w \in \dzi{v}}
|\lambda_w|^2\int_{\sigma} \frac 1 {s^{n+1}} \D
\mu_w(s)
   \\
   + \sum_{v\in \dzin{n}{u}} |\lambda_{u\mid v}|^2
\int_{\sigma} \frac 1 {s^n} \D (\varepsilon_v
\delta_0)(s) + \varepsilon_u \delta_0(\sigma), \quad
\sigma \in \borel{\rbb_+}.
   \end{multline}
Since $\mu_u$ is a finite positive measure and
$n\Ge 1$, we deduce from \eqref{wz3} that
$\varepsilon_v=0$ whenever $\lambda_{u\mid v}
\neq 0$, and thus
   \begin{align}  \label{wz4}
\sum_{v\in \dzin{n}{u}} |\lambda_{u\mid v}|^2
\int_{\sigma} \frac 1 {s^n} \D (\varepsilon_v
\delta_0)(s)=0.
   \end{align}
It follows from \eqref{wz3} and \eqref{wz4} that
   \begin{align*}
\mu_u(\sigma) = \sum_{v\in \dzin{n}{u}} \sum_{w \in
\dzi{v}} |\lambda_{u\mid v}\lambda_w|^2\int_{\sigma}
\frac 1 {s^{n+1}} \D \mu_w(s) + \varepsilon_u
\delta_0(\sigma)
      \\
      \overset{\eqref{num1}\&\eqref{num2}}= \sum_{w\in
\dzin{n+1}{u}} |\lambda_{u\mid w}|^2 \int_{\sigma}
\frac 1 {s^{n+1}} \D \mu_w(s) + \varepsilon_u
\delta_0(\sigma).
   \end{align*}
This completes the proof.
   \end{proof}
   \begin{lem} \label{lem3}
Let $\tcal$ be a directed tree. Suppose that
$\lambdab=\{\lambda_v\}_{v \in V^\circ}$ is a
system of complex numbers, $\{\varepsilon_v\}_{v
\in V}$ is a system of nonnegative real numbers
and $\{\mu_v\}_{v \in V}$ is a system of Borel
probability measures on $\rbb_+$ satisfying
\eqref{muu+} for every $u \in V$. Let $\slam$ be
a weighted shift on the directed tree $\tcal$
with weights $\lambdab$. Then the following
assertions hold\/{\em :}
   \begin{enumerate}
   \item[(i)] for all $u \in V$ and $n \in \nbb$,
   \begin{align}  \label{wz5}
\int_0^\infty s^n \D \mu_u(s) = \sum_{v\in
\dzin{n}{u}} |\lambda_{u\mid v}|^2,
   \end{align}
   \item[(ii)] if $\dzin{n}{u}=\varnothing$ for some
$u\in V$ and $n\in \nbb$, then $\mu_v=\delta_0$ for
all $v \in \des{u}$,
   \item[(iii)] $\escr \subseteq \dzn{\slam}$ if and only
if $\int_0^\infty s^n \D \mu_u(s) < \infty$ for all $n
\in \zbb_+$ and $u \in V$,
   \item[(iv)] if $\escr \subseteq \dzn{\slam}$, then
for all $u \in V$ and $n \in \zbb_+$,
   \begin{align}    \label{wz6}
\|\slam^n e_u\|^2 = \int_0^\infty s^n \D \mu_u(s),
   \end{align}
   \item[(v)] $\slam \in \ogr{\ell^2(V)}$ if and
only if there exists a real number $M \Ge 0$ such that
$\supp \mu_u \subseteq [0,M]$ for every $u \in V$.
   \end{enumerate}
   \end{lem}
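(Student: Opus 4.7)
The plan is to establish assertion (i) first, from which (iii) and (iv) follow immediately when combined with Lemma \ref{lem4}, while (ii) is essentially a degeneracy argument and (v) reduces to a moment-bound comparison using Proposition \ref{bas}(v).

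For (i), the natural starting point is the iterated consistency formula \eqref{wz2} from Lemma \ref{lem1}(iv), which expresses
\begin{align*}
\mu_u(\sigma) = \sum_{v \in \dzin{n}{u}} |\lambda_{u\mid v}|^2 \int_{\sigma} \frac{1}{s^n} \D \mu_v(s) + \varepsilon_u \delta_0(\sigma).
\end{align*}
Integrating $s^n$ against both sides, the Dirac term contributes $0$ for $n \Ge 1$. For the main term, Tonelli's theorem legitimizes the interchange of integration and summation (all quantities are nonnegative), producing $\sum_{v \in \dzin{n}{u}} |\lambda_{u\mid v}|^2 \int_0^\infty s^n \cdot s^{-n} \D \mu_v(s)$. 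The only subtlety is at $s = 0$: whenever $\lambda_{u\mid v} \neq 0$, every factor in the product \eqref{luv} defining $\lambda_{u\mid v}$ is nonzero, so in particular $\lambda_v \neq 0$, and Lemma \ref{lem1}(iii) yields $\mu_v(\{0\}) = 0$; the corresponding integral then equals $1$. Terms with $\lambda_{u\mid v} = 0$ vanish under the convention $0 \cdot \infty = 0$. This gives \eqref{wz5}.

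Assertion (ii) exploits the empty-set hypothesis: if $\dzin{n}{u} = \varnothing$ then $\dzin{m}{u} = \varnothing$ for all $m \Ge n$, so $\des u = \bigsqcup_{m=0}^{n-1} \dzin{m}{u}$, and for any $v \in \dzin{m}{u}$ with $0 \Le m < n$ one has $\dzin{n-m}{v} \subseteq \dzin{n}{u} = \varnothing$ by \eqref{chmn}. Plugging this into Lemma \ref{lem1}(iv) applied at $v$ (with $n-m$ in place of $n$) collapses $\mu_v$ to $\varepsilon_v \delta_0$, and the probability-measure condition $\mu_v(\rbb_+)=1$ forces $\mu_v = \delta_0$. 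Assertion (iii) is a direct consequence of (i) combined with the domain characterization in Lemma \ref{lem4}(i), and (iv) follows at once from (i) and Lemma \ref{lem4}(iii), with the case $n=0$ being trivial from $\mu_u(\rbb_+)=1$. For (v), boundedness of $\slam$ combined with (iv) gives $\int_0^\infty s^n \D \mu_u(s) = \|\slam^n e_u\|^2 \Le \|\slam\|^{2n}$ for all $n \in \zbb_+$, which forces $\supp \mu_u \subseteq [0, \|\slam\|^2]$; conversely, if $\supp \mu_u \subseteq [0,M]$ for all $u$, then (i) at $n=1$ yields $\sum_{v \in \dzi u}|\lambda_v|^2 \Le M$ uniformly in $u$, so $\slam$ is bounded by Proposition \ref{bas}(v). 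The main (and essentially the only) conceptual point is the careful handling of the convention $0 \cdot \infty = 0$ in (i), where Lemma \ref{lem1}(iii) is the crucial ingredient that guarantees the cancellation $s^n \cdot s^{-n} = 1$ is $\mu_v$-a.e.\ valid for every $v$ contributing to the sum.
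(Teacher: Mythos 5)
Your proposal is correct and follows essentially the same route as the paper: assertion (i) is derived from the iterated consistency formula \eqref{wz2} after disposing of the $s=0$ issue (you obtain the dichotomy $\lambda_{u\mid v}=0$ or $\mu_v(\{0\})=0$ from Lemma \ref{lem1}(iii), whereas the paper gets it by substituting $\sigma=\{0\}$ into \eqref{wz2} -- a negligible difference), and (ii)--(v) are handled exactly as in the paper via \eqref{wz5}, Lemma \ref{lem4} and Proposition \ref{bas}(v).
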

   \begin{proof}
   (i) Substituting $\sigma=\{0\}$ into
\eqref{wz2}, we see that for every $v\in
\dzin{n}{u}$, either $\lambda_{u\mid v} = 0$, or
$\lambda_{u\mid v} \neq 0$ and $\mu_v(\{0\})=0$.
This and \eqref{wz2} lead to \eqref{wz5}.

   (ii) It follows from \eqref{wz5} that
$\int_0^\infty s^n \D \mu_u(s) = 0$ (recall the
convention that $\sum_{v\in\varnothing} x_v=0$). This
and $n \Ge 1$ implies that $\mu_u((0,\infty))=0$.
Since $\mu_u(\rbb_+) = 1$, we deduce that
$\mu_u=\delta_0$.

If $v \in \des{u}\setminus \{u\}$, then by
\eqref{num3} there exists $k\in \nbb$ such that
$v\in \dzin{k}{u}$. Since $\dzi{\cdot}$ is a
monotonically increasing set-function, we infer
from \eqref{chmn} that $\dzin{n}{v} \subseteq
\dzin{n+k}{u}=\varnothing$. By the previous
argument applied to $v$ in place of $u$, we get
$\mu_v=\delta_0$.

   Assertions (iii) and (iv) follow from (i) and Lemma
\ref{lem4}.

   (v) To prove the ``only if'' part, note that
   \begin{align*}
\lim_{n\to\infty} \Big(\int_0^\infty s^{n} \D
\mu_u(s)\Big)^{1/n} \overset{\eqref{wz6}}=
\lim_{n\to\infty} (\|\slam^n e_u\|^{1/n})^2 \Le
\|\slam\|^2,
   \end{align*}
which implies that $\supp \mu_u \subseteq
[0,\|\slam\|^2]$ (cf.\ \cite[page 71]{Rud}). The proof
of the converse implication goes as follows. By
\eqref{wz5}, we have
   \begin{align*}
\sum_{v\in \dzi{u}} |\lambda_{v}|^2 = \int_0^\infty s
\D \mu_u(s) \Le M, \quad u \in V,
   \end{align*}
which in view of Proposition \ref{bas}\,(v) implies
that $\slam \in \ogr{\ell^2(V)}$ and $\|\slam\| \Le
\sqrt{M}$. This completes the proof.
   \end{proof}
   \subsection{\label{sf-a}Arbitrary weights}
   After all these preparations we can prove the main
criterion for subnormality of unbounded weighted
shifts on directed trees. It is written in terms of
consistent systems of measures.
   \begin{thm} \label{main}
 Let $\slam$ be a weighted shift on a directed
tree $\tcal$ with weights
$\lambdab=\{\lambda_v\}_{v \in V^\circ}$ such
that $\escr \subseteq \dzn{\slam}$. Suppose that
there exist a system $\{\mu_v\}_{v \in V}$ of
Borel probability measures on $\rbb_+$ and a
system $\{\varepsilon_v\}_{v \in V}$ of
nonnegative real numbers that satisfy
\eqref{muu+} for every $u \in V$. Then $\slam$ is
subnormal.
   \end{thm}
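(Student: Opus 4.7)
The plan is to apply Theorem \ref{tw1} with $\xx := \{e_u : u \in V\}$, presenting $\slam$ as a limit of a net $\{S_{\lambdabi}\}_{i\in\nbb}$ of bounded subnormal weighted shifts on $\tcal$. Condition (ii) of Theorem \ref{tw1} is free: the hypothesis $\escr \subseteq \dzn{\slam}$ yields $\overline{\dz{\slam}} = \ell^2(V)$ via Proposition \ref{bas}(iii), so $\escr$ is a core of $\slam$ by Proposition \ref{bas}(iv); since $\escr = \lin \xx \subseteq \ff := \lin \bigcup_{n\ge 0}\slam^n(\xx)$, $\ff$ is a core as well. Condition (i) is automatic once each $S_{\lambdabi}$ is shown to be bounded. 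The substance of the argument is to arrange condition (iii), and for this I would rely on Proposition \ref{potegi}, which reduces the required inner-product convergence to pointwise convergence $\lambdai{v} \to \lambda_v$ together with norm convergence $\|S_{\lambdabi}^n e_u\| \to \|\slam^n e_u\|$.

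For each $i \in \nbb$, one constructs a pair $(\lambdabi, \{\mui{v}\}_{v \in V})$ consisting of complex weights and Borel probability measures such that (a) the consistency relation \eqref{muu+} holds for $(\lambdabi, \mui{\cdot})$, (b) there is some $M_i < \infty$ with $\supp \mui{v} \subseteq [0, M_i]$ for every $v \in V$, and (c) $\lambdai{v} \to \lambda_v$ and $\int_0^\infty s^n \D \mui{v}(s) \to \int_0^\infty s^n \D \mu_v(s)$ for every $n \in \zbb_+$ and $v \in V$. The passage through shifts with \emph{zero weights} is essential here: the construction truncates the tree at a finite parent-closed range $F_i \nearrow V$, zeroes out an appropriately chosen collection of weights to render the resulting shift bounded, and redirects the tails of the $\mu_v$ into point masses at $0$, with the consistency of the new system preserved via Lemmas \ref{bext} and \ref{charsub2}(i). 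Granted such $(\lambdabi, \mui{\cdot})$, Lemma \ref{lem3}(v) forces $S_{\lambdabi} \in \ogr{\ell^2(V)}$, Lemma \ref{lem3}(iv) identifies $\mui{u}$ as a representing measure of $\{\|S_{\lambdabi}^n e_u\|^2\}_{n=0}^\infty$, and Theorem \ref{charsub} delivers subnormality of each $S_{\lambdabi}$.

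Proposition \ref{potegi} applied to this net then produces condition (iii) of Theorem \ref{tw1}, and hence $\slam$ is subnormal. The main obstacle is the simultaneous fulfilment of (a)--(c) in the construction: naive truncations (either zeroing weights without adjusting measures, or truncating measures without adjusting weights) break consistency, because at any vertex $v$ with $\lambdai{v}\neq 0$ and $\mui{v}=\delta_0$ one has $\int_0^\infty s^{-1}\D \mui{v}(s) = \infty$, which instantly violates the tail bound \eqref{alanconsi}. A careful coordination of the zeroed weights with the support of the modified measures, propagated inductively along $F_i$ from the outer frontier back toward the root and controlled at each step by the backward-extension machinery of Lemma \ref{bext}, is therefore the technical heart of the proof.
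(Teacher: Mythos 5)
Your surrounding architecture coincides exactly with the paper's: approximate $\slam$ by bounded subnormal shifts $S_{\lambdabi}$ on the same tree, obtain their subnormality from Lemma \ref{lem3}\,(iv),(v) and Theorem \ref{charsub}, pass to the limit via Proposition \ref{potegi} and Theorem \ref{tw1} with $\xx=\{e_u\colon u\in V\}$, and use Proposition \ref{bas}\,(iii),(iv) for the core condition. But the entire substance of the proof is the construction of the approximating triplets $(\lambdabi,\{\mui{v}\}_{v\in V},\{\varepsiloni{v}\}_{v\in V})$ satisfying your (a)--(c), and this you do not supply; moreover, the route you indicate is a dead end, not merely a technical difficulty. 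If all weights outside a finite parent-closed set $F_i$ are zeroed, then every path of length $n>\card{F_i}$ issuing from any $u$ leaves $F_i$, so $\lambdai{u\mid v}=0$ for all $v\in\dzin{n}{u}$ and all large $n$; by Lemma \ref{lem3}\,(i) the probability measure $\mui{u}$ then has vanishing $n$-th moments for large $n$, whence $\mui{u}=\delta_0$ for \emph{every} $u\in V$, and Lemma \ref{lem1}\,(iii) forces $\lambdai{v}=0$ for every $v\in V^\circ$, i.e., $S_{\lambdabi}=0$. Likewise, ``redirecting the tail of $\mu_v$ into a point mass at $0$'' gives $\mui{v}(\{0\})>0$ and so, again by Lemma \ref{lem1}\,(iii), forces $\lambdai{v}=0$. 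Thus any consistent system built by finite tree truncation collapses, and your condition (c) ($\lambdai{v}\to\lambda_v$) cannot hold unless $\slam=0$. The backward propagation you flag as ``the technical heart'' is an obstruction that rules the scheme out, not one that careful coordination can overcome.

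The paper's construction truncates the \emph{measures}, not the tree: $\mui{v}(\sigma)=\mu_v(\sigma\cap[0,i])/\mu_v([0,i])$ when $\mu_v([0,i])>0$ (and $\delta_0$ otherwise), with the weights rescaled by $\lambdai{v}=\lambda_v\sqrt{\mu_v([0,i])/\mu_{\pa{v}}([0,i])}$ and $\varepsiloni{v}=\varepsilon_v/\mu_v([0,i])$. Restricting \eqref{muu+} to $\sigma\cap[0,i]$ and dividing by $\mu_u([0,i])$ shows that the new triplet again satisfies \eqref{muu+}; since $\supp \mui{v}\subseteq[0,i]$ uniformly in $v$, Lemma \ref{lem3}\,(v) yields boundedness, and monotone convergence gives $\lambdai{v}\to\lambda_v$ together with $\|S_{\lambdabi}^n e_u\|^2=\mu_u([0,i])^{-1}\int_{[0,i]}s^n\D\mu_u(s)\to\|\slam^n e_u\|^2$, which is all that Proposition \ref{potegi} requires. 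Zero weights do occur, but only at vertices $v$ with $\mu_{\pa{v}}([0,i])=0$ -- no finite subtree, no mass added at $0$, and no appeal to Lemma \ref{bext} or Lemma \ref{charsub2} is involved. You would need to replace your construction by one of this kind for the argument to close.
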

   \begin{proof}
   For a fixed positive integer $i$, we define the
system $\lambdabi =\big\{\lambdai{v}\big\}_{v \in
V^\circ}$ of complex numbers, the system
$\big\{\mui{v}\big\}_{v \in V}$ of Borel probability
measures on $\rbb_+$ and the system
$\big\{\varepsiloni{v}\big\}_{v \in V}$ of nonnegative
real numbers by
   \allowdisplaybreaks
   \begin{align}  \label{wzd1}
\lambdai{v} & =
   \begin{cases}
   \lambda_v
\sqrt{\cfrac{\mu_v([0,i])}{\mu_{\pa{v}}([0,i])}} &
\text{ if } \mu_{\pa{v}}([0,i]) > 0,
   \\[1.5ex]
0 & \text{ if } \mu_{\pa{v}}([0,i]) = 0,
   \end{cases}
\quad v \in V^\circ,
   \\     \label{wzd2}
\mui{v}(\sigma) & =
   \begin{cases}
\cfrac{\mu_v(\sigma \cap [0,i])}{\mu_v([0,i])} &
\text{ if } \mu_v([0,i]) > 0,
   \\[1.5ex]
\delta_0(\sigma) & \text{ if } \mu_v([0,i]) = 0,
   \end{cases}
\quad \sigma \in \borel{\rbb_+},\, v \in V,
   \\  \label{wzd3}
\varepsiloni{v} & =
   \begin{cases}
\cfrac{\varepsilon_v}{\mu_v([0,i])} & \text{ if }
\mu_v([0,i]) > 0,
   \\[1.5ex]
1 & \text{ if } \mu_v([0,i])=0,
   \end{cases}
\quad v \in V.
   \end{align}
Our first goal is to show that the following equality
holds for all $u \in V$ and $i\in \nbb$,
   \begin{align} \label{wz1J}
\mui{u}(\sigma) = \sum_{v\in \dzi{u}} |\lambdai{v}|^2
\int_{\sigma} \frac 1 s \D \mui{v}(s) +
\varepsiloni{u} \delta_0(\sigma), \quad \sigma \in
\borel{\rbb_+}.
   \end{align}
For this fix $u \in V$ and $i\in \nbb$. If
$\mu_u([0,i])=0$, then, according to our definitions,
we have $\lambdai{v}=0$ for all $v \in \dzi{u}$,
$\mui{u}=\delta_0$ and $\varepsiloni{u}=1$, which
means that the equality \eqref{wz1J} holds. Consider
now the case of $\mu_u([0,i])>0$. It follows from
\eqref{muu+} that
   \begin{align}   \label{muu}
\mu_u(\sigma \cap [0,i]) = \sum_{v\in \dzi{u}}
|\lambda_v|^2 \int_{\sigma \cap [0,i]} \frac 1 s \D
\mu_v(s) + \varepsilon_u \delta_0(\sigma), \quad
\sigma \in \borel{\rbb_+}.
   \end{align}
If $v \in \dzi{u}$ (equivalently:\ $u=\pa{v}$), then
by \eqref{wzd1} and \eqref{wzd2} we have
   \begin{align} \label{muu2}
   \begin{aligned}
\frac{|\lambda_v|^2}{\mu_u([0,i])} \int_{\sigma \cap
[0,i]} \frac 1 s \D \mu_v(s) & =
   \begin{cases}
   |\lambdai{v}|^2 \int_{\sigma} \frac 1 s \D
\mui{v}(s) & \text{ if } \mu_v([0,i]) > 0,
   \\[1ex]
   0 & \text{ if } \mu_v([0,i])=0,
   \end{cases}
   \\
   & = |\lambdai{v}|^2 \int_{\sigma} \frac 1 s \D
\mui{v}(s),
    \end{aligned}
   \end{align}
where the last equality holds because
$\lambdai{v}=0$ whenever $\mu_v([0,i])=0$.
Dividing both sides of \eqref{muu} by
$\mu_u([0,i])$ and using \eqref{muu2}, we obtain
\eqref{wz1J}.

Let $S_{\lambdabi}$ be the weighted shift on $\tcal$
with weights $\lambdabi$. Since, by \eqref{wzd2},
$\supp \mui{u} \subseteq [0,i]$ for every $u \in V$,
we infer from \eqref{wz1J} and Lemma~ \ref{lem3}\,(v),
applied to the triplet $(\lambdabi, \{\mui{v}\}_{v \in
V}, \{\varepsiloni{v}\}_{v \in V})$, that
$S_{\lambdabi}\in \ogr{\ell^2(V)}$. In turn,
\eqref{wz1J} and Lemma \ref{lem3}\,(iv) (applied to
the same triplet) imply that for every $u \in V$,
$\{\|S_{\lambdabi}^n e_u\|^2\}_{n=0}^\infty$ is a
Stieltjes moment sequence (with a representing measure
$\mui{u}$). Hence, by Theorem \ref{charsub}, the
operator $S_{\lambdabi}$ is subnormal.

Since $\mu_u$, $u \in V$, are Borel probability
measures on $\rbb_+$, we have
   \begin{align}  \label{lim1}
\lim_{i \to \infty} \mu_u([0,i]) = 1, \quad u \in V.
   \end{align}
Hence, for every $u \in V$ there exists a positive
integer $\kappa_{u}$ such that
   \begin{align}   \label{as1}
\mu_{u}([0,i]) > 0,\quad i \in \nbb, \, i \Ge
\kappa_{u}.
   \end{align}
Note that
   \begin{align} \label{wzj}
\lim_{i \to \infty} \lambdai{v} = \lambda_v, \quad v
\in V^\circ.
   \end{align}
Indeed, if $v \in V^\circ$, then \eqref{wzd1} and
\eqref{as1} yield $\lambdai{v}=\lambda_v
\sqrt{\frac{\mu_v([0,i])} {\mu_{\pa{v}}([0,i])}}$
for all integers $i \Ge \kappa_{\pa{v}}$. This,
combined with \eqref{lim1}, gives \eqref{wzj}. By
\eqref{wzd2}, \eqref{as1}, \eqref{wz1J} and Lemma
\ref{lem3}\,(iv), applied to $S_{\lambdabi}$, we
have
   \begin{align*}
\|S_{\lambdabi}^n e_u\|^2 = \int_0^\infty s^n \D
\mui{u}(s) = \frac{1}{\mu_u([0,i])} \int_{[0,i]} s^n
\D \mu_u(s), \quad n \in \zbb_+,\, i \Ge \kappa_{u},
\, u \in V.
   \end{align*}
This, together with \eqref{lim1} and Lemma
\ref{lem3}\,(iv), now applied to $\slam$, implies
that
   \begin{align}    \label{limsti}
\lim_{i \to \infty} \|S_{\lambdabi}^n e_u\|^2 =
\int_0^\infty s^n \D \mu_u(s) = \|\slam^n e_u\|^2,
\quad n \in \zbb_+,\, u \in V.
   \end{align}
It follows from \eqref{wzj}, \eqref{limsti} and
Proposition \ref{potegi} that \eqref{slim+} holds.
According to Proposition \ref{bas}\,(iv), $\escr$ is a
core of $\slam$. Hence $\lin \bigcup_{n=0}^\infty
\slam^n(\escr)$ is a core of $\slam$ as well. Applying
\eqref{slim+} and Theorem \ref{tw1} to the operators
$\{S_{\lambdabi}\}_{i=1}^\infty$ and $\slam$ with
$\xx:=\{e_u\colon u \in V\}$ completes the proof of
Theorem \ref{main}.
   \end{proof}
   \begin{rem}
In the proof of Theorem \ref{main} we have used
Proposition \ref{potegi} which provides a general
criterion for the validity of the approximation
procedure \eqref{slim+}. However, if the
approximating triplets $(\lambdabi,
\{\mui{v}\}_{v \in V}, \{\varepsiloni{v}\}_{v \in
V})$, $i=1,2,3, \ldots$, are defined as in
\eqref{wzd1}, \eqref{wzd2} and \eqref{wzd3}, then
   \begin{align} \label{slim2}
\lim_{i\to \infty} S_{\lambdabi}^n e_u =
S_{\lambdab}^n e_u, \quad u \in V, \, n \in
\zbb_+.
   \end{align}
To prove this, we first show that for all $u \in V$
and $i \Ge \kappa_u$ (see \eqref{as1}),
   \begin{align} \label{liuup}
\lambdai{u \mid u^\prime} = \lambda_{u \mid
u^\prime} \; \sqrt{\frac{\mu_{u^\prime}
([0,i])}{\mu_u([0,i])}}, \quad u^\prime \in
\dzin{n}{u}, \, n \in \zbb_+.
   \end{align}
Indeed, if $n=0$, then \eqref{liuup} holds.
Suppose that $n\Ge 1$. If
$\mu_{\paa(u^\prime)}([0,i])=0$, then $n\Ge 2$
and, by \eqref{wzd1}, $\lambdai{u^\prime} = 0$,
which implies that $\lambdai{u \mid u^\prime}=0$.
Since $\mu_{\paa(u^\prime)}([0,i])=0$, we deduce
from \eqref{muu+} (applied to $u=\paa(u^\prime)$)
that either $\lambda_{u^\prime}=0$, or
$\mu_{u^\prime} ([0,i]) = 0$. In both cases, the
right-hand side of \eqref{liuup} vanishes, and so
\eqref{liuup} holds. In turn, if
$\mu_{\paa(u^\prime)}([0,i]) > 0$, then we can
define
   \begin{align*}
j_0 = \min \Big\{j \in \{1, \ldots, n\}\colon
\mu_{\paa^k(u^\prime)}([0,i]) > 0 \text{ for all
} k=1, \ldots, j\Big\}.
   \end{align*}
Clearly, $1 \Le j_0 \Le n$. First, we consider
the case where $j_0 < n$. Since, by \eqref{as1},
$\mu_u([0,i])> 0$, we must have $j_0 \Le n-2$.
Thus $\mu_{\paa^{j_0+1}(u^\prime)}([0,i]) = 0$,
which together with \eqref{luv} and \eqref{wzd1}
implies that the left-hand side of \eqref{liuup}
vanishes. Since
$\mu_{\paa^{j_0+1}(u^\prime)}([0,i]) = 0$ and
$\mu_{\paa^{j_0}(u^\prime)}([0,i]) > 0$, we
deduce from \eqref{muu+} (applied to
$u=\paa^{j_0+1}(u^\prime)$) that
$\lambda_{\paa^{j_0}(u^\prime)}=0$, and so the
right-hand side of \eqref{liuup} vanishes. This
means that \eqref{liuup} is again valid. Finally,
if $j_0=n$, then by \eqref{wzd1} we have
   \begin{align*}
\lambdai{u \mid u^\prime} = \prod_{j=0}^{n-1}
\lambda_{\paa^j(u^\prime)}
\sqrt{\frac{\mu_{\paa^j(u^\prime)([0,i])}}
{\mu_{\paa^{j+1}(u^\prime)([0,i])}}} = \lambda_{u
\mid u^\prime} \; \sqrt{\frac{\mu_{u^\prime}
([0,i])}{\mu_u([0,i])}},
   \end{align*}
which completes the proof of \eqref{liuup}. Now we
show that
   \begin{align} \label{ils}
\lim_{i \to \infty} \is{S_{\lambdab}^n
e_u}{S_{\lambdabi}^n e_u} = \|\slam^n e_u\|^2,
\quad u \in V, \, n \in \zbb_+.
   \end{align}
Indeed, it follows from Lemma \ref{lem4}(ii) and
\eqref{liuup} that
   \begin{multline*}
\is{S_{\lambdab}^n e_u}{S_{\lambdabi}^n e_u} =
\sum_{u^\prime \in \dzin{n}{u}} \lambda_{u\mid
u^\prime} \overline{\lambdai{u \mid u^\prime}}
   \\
= \frac{1}{\sqrt{\mu_u([0,i])}} \sum_{u^\prime
\in \dzin{n}{u}} |\lambda_{u\mid u^\prime}|^2
\sqrt{\mu_{u^\prime}([0,i])}, \quad u \in V, \, n
\in \zbb_+, \, i \Ge \kappa_u.
   \end{multline*}
By applying Lebesgue's monotone convergence theorem
for series, \eqref{lim1} and Lem\-ma \ref{lem4}(iii),
we obtain \eqref{ils}. Since
   \begin{align*}
\|S_{\lambdab}^n e_u - S_{\lambdabi}^n e_u\|^2 =
\|S_{\lambdab}^n e_u\|^2 + \|S_{\lambdabi}^n
e_u\|^2 - 2 \, \mathrm{Re}\is{S_{\lambdab}^n
e_u}{S_{\lambdabi}^n e_u}
   \end{align*}
we infer \eqref{slim2} from \eqref{limsti} and
\eqref{ils}. Clearly \eqref{slim2} implies
\eqref{slim+}.
   \end{rem}
We conclude this section with a general criterion for
subnormality of weighted shifts on directed trees
written in terms of determinacy of Stieltjes moment
sequences.
   \begin{cor} \label{necessdet2}
Let $\slam$ be a weighted shift on a directed tree
$\tcal$ with weights $\lambdab=\{\lambda_v\}_{v \in
V^\circ}$ such that $\escr \subseteq \dzn{\slam}$.
Assume that $\{\|\slam^{n+1} e_u\|^2\}_{n=0}^\infty$
is a determinate Stieltjes moment sequence for every
$u \in V$. Then the following conditions are
equivalent{\em :}
   \begin{enumerate}
   \item[(i)] $\slam$ is subnormal,
   \item[(ii)] $\{\|\slam^{n} e_u\|^2\}_{n=0}^\infty$
is a Stieltjes moment sequence for every $u \in V$,
   \item[(iii)] there exist a system $\{\mu_u\}_{u
\in V}$ of Borel probability measures on $\rbb_+$
and a system $\{\varepsilon_u\}_{u \in V}$ of
nonnegative real numbers that satisfy
\eqref{muu+} for every $u \in V$.
   \end{enumerate}
   \end{cor}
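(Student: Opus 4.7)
The plan is to establish the equivalence via the cycle $(\mathrm{i}) \Rightarrow (\mathrm{ii}) \Rightarrow (\mathrm{iii}) \Rightarrow (\mathrm{i})$, invoking three of the principal results developed earlier in the paper. None of the steps should require genuinely new ideas; the content of the corollary is to assemble them under the determinacy hypothesis.

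For $(\mathrm{i}) \Rightarrow (\mathrm{ii})$, I would simply appeal to Proposition \ref{necess} (itself a consequence of Proposition \ref{necess-gen}): if $\slam$ is subnormal and $\escr \subseteq \dzn{\slam}$, then for every $u \in V$ the sequence $\{\|\slam^n e_u\|^2\}_{n=0}^\infty$ is a Stieltjes moment sequence. This uses neither the determinacy hypothesis nor any further structural assumption.

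For $(\mathrm{ii}) \Rightarrow (\mathrm{iii})$, I would apply Lemma \ref{2necess+}. The hypothesis of that lemma asks that $\{\|\slam^n e_u\|^2\}_{n=0}^\infty$ be a Stieltjes moment sequence for every $u \in V^\prime$ (which is furnished by $(\mathrm{ii})$, since $V^\prime \subseteq V$) and that $\{\|\slam^{n+1} e_u\|^2\}_{n=0}^\infty$ be determinate for every $u \in V^\prime$ (which is given by the standing assumption of the corollary restricted to $V^\prime$). The conclusion of Lemma \ref{2necess+} is exactly the existence of systems $\{\mu_u\}_{u\in V}$ and $\{\varepsilon_u\}_{u \in V}$ satisfying \eqref{muu+}, i.e., condition $(\mathrm{iii})$.

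For $(\mathrm{iii}) \Rightarrow (\mathrm{i})$, this is precisely the content of Theorem \ref{main}: existence of a consistent system of probability measures and nonnegative numbers satisfying \eqref{muu+} at every vertex forces subnormality of $\slam$ (given the standing assumption $\escr \subseteq \dzn{\slam}$). No further work is needed.

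Since each link in the chain is a direct citation of a previously proved result, I do not anticipate any substantial obstacle. The only point worth a sentence of justification is the application of Lemma \ref{2necess+}, where one has to notice that the determinacy hypothesis, imposed in the corollary for all $u \in V$, trivially specializes to $u \in V^\prime$ as required by the lemma, and that $(\mathrm{ii})$ likewise restricts to $V^\prime$. Everything else is formal.
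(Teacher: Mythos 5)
Your proposal is correct and follows exactly the paper's own proof: the cycle (i)$\Rightarrow$(ii) via Proposition \ref{necess}, (ii)$\Rightarrow$(iii) via Lemma \ref{2necess+}, and (iii)$\Rightarrow$(i) via Theorem \ref{main}. Your extra remark that the determinacy hypothesis and condition (ii) restrict from $V$ to $V^\prime$ as required by Lemma \ref{2necess+} is a correct (and slightly more explicit) reading of the same argument.
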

   \begin{proof}
(i)$\Rightarrow$(ii) Use Proposition \ref{necess}.

(ii)$\Rightarrow$(iii) Employ Lemma \ref{2necess+}.

(iii)$\Rightarrow$(i) Apply Theorem \ref{main}.
   \end{proof}
Regarding Corollary \ref{necessdet2}, note that
by Proposition \ref{necess}, Lemma
\ref{lem3}\,(iv) and \eqref{st+1} each of the
conditions (i), (ii) and (iii) implies that
$\{\|\slam^{n+1} e_u\|^2\}_{n=0}^\infty$ is a
Stieltjes moment sequence for every $u \in V$.
   \subsection{Nonzero weights}
   As pointed out in \cite[Proposition 5.1.1]{j-j-s}
bounded hyponormal weighted shifts on directed trees
with nonzero weights are always injective. It turns
out that the same conclusion can be derived in the
unbounded case (with almost the same proof). Recall
that a densely defined operator $S$ in $\hh$ is said
to be {\em hyponormal} if $\dz{S} \subseteq \dz{S^*}$
and $\|S^*f\| \Le \|Sf\|$ for all $f \in \dz S$. It is
well-known that subnormal operators are hyponormal
(but not conversely) and that hyponormal operators are
closable and their closures are hyponormal. We refer
the reader to \cite{ot-sch,jj1,jj2,jj3,sto} for
elements of the theory of unbounded hyponormal
operators.
   \begin{pro} \label{hypcor}
Let $\tcal$ be a directed tree with $V^\circ \neq
\varnothing$. If $\slam$ is a hyponormal weighted
shift on $\tcal$ whose all weights are nonzero, then
$\tcal$ is leafless. In particular, $\slam$ is
injective and $V$ is infinite and countable.
   \end{pro}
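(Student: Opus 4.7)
The plan is to argue by contradiction, showing that a leaf vertex would force some weight to vanish. First, since $\slam$ is hyponormal it is densely defined, so Proposition \ref{bas}(iii) gives $\escr \subseteq \dz{\slam}$ and Proposition \ref{bas}(vi) gives $\escr \subseteq \dz{\slam^*}$ together with the explicit formulas \eqref{eu} and \eqref{sl*}.

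Suppose, towards a contradiction, that $\tcal$ has a leaf, i.e., a vertex $w\in V$ with $\dzi{w}=\varnothing$. I would first verify that any such $w$ must lie in $V^\circ$. If $\tcal$ has no root, then $V^\circ=V$ and there is nothing to check. If $\tcal$ has a root $\koo$, then $\koo$ cannot itself be a leaf: otherwise $\dzi{\koo}=\varnothing$ would yield, via Proposition \ref{przem}, $V=\des{\koo}=\{\koo\}$, contradicting $V^\circ\neq\varnothing$. Hence $w\in V^\circ$, and in particular $w$ has a parent $\paa(w)$.

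Now apply \eqref{eu} with $\dzi{w}=\varnothing$ to get $\slam e_w=0$, and note that by hyponormality $e_w\in\dz{\slam^*}$ with $\|\slam^* e_w\|\Le\|\slam e_w\|=0$. On the other hand, \eqref{sl*} and $w\in V^\circ$ give $\slam^* e_w=\overline{\lambda_w}\,e_{\paa(w)}$, so $\|\slam^* e_w\|=|\lambda_w|$. Therefore $\lambda_w=0$, which contradicts the hypothesis that all weights are nonzero. This proves that $\tcal$ is leafless.

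For the ``In particular'' clause: leaflessness means $\dzi{u}\neq\varnothing$ for every $u\in V$, and together with the nonzero-weights hypothesis this yields $\sum_{v\in\dzi{u}}|\lambda_v|^2>0$ for all $u\in V$, so Proposition \ref{bas}(vii) gives injectivity of $\slam$. The paper already notes that every leafless directed tree is infinite; and Proposition \ref{bas}(viii), applicable because $\slam$ is densely defined with all weights nonzero, forces $V$ to be at most countable. Hence $V$ is countably infinite. The only subtlety in the argument is the preliminary observation that a leaf in a tree with $V^\circ\neq\varnothing$ cannot be a root, which is a straightforward consequence of Proposition \ref{przem}; the substantive content is simply the one-line hyponormality estimate on $e_w$.
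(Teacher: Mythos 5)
Your proof is correct and follows essentially the same route as the paper's: a contradiction from a hypothetical leaf $w$, the observation via Proposition \ref{przem} and $V^\circ\neq\varnothing$ that $w\in V^\circ$, and the hyponormality estimate $|\lambda_w|^2=\|\slam^*e_w\|^2\Le\|\slam e_w\|^2=0$, followed by parts (vii) and (viii) of Proposition \ref{bas} for the ``in particular'' clause. The only difference is that you spell out the root-cannot-be-a-leaf observation in slightly more detail, which the paper leaves implicit.
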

   \begin{proof}
Suppose that, contrary to our claim, $\dzi u =
\varnothing$ for some $u \in V$. We deduce from
Proposition \ref{przem} and $V^\circ \neq \varnothing$
that $u \in V^\circ$. Hence, by assertions (ii), (iii)
and (vi) of Proposition \ref{bas}, we have
   \begin{align*}
|\lambda_u|^2 \overset{ \eqref{sl*}}= \|\slam^*e_u\|^2
\Le \|\slam e_u\|^2 \overset{\eqref{eu}}= \sum_{v \in
\dzi u} |\lambda_v|^2 = 0,
   \end{align*}
which is a contradiction. Since each leafless directed
tree is infinite, we deduce from assertions (vii) and
(viii) of Proposition \ref{bas} that $\slam$ is
injective and $V$ is infinite and countable. This
completes the proof.
   \end{proof}
The sufficient condition for subnormality of
weighted shifts on directed trees stated in
Theorem \ref{main} takes the simplified form for
weighted shifts with nonzero weights. Indeed, if
a weighted shift $\slam$ on $\tcal$ with nonzero
weights satisfies the assumptions of Theorem
\ref{main}, then, by assertions (ii) and (iii) of
Lemma \ref{lem1}, $\varepsilon_v=0$ for every $v
\in V^\circ$.
   \begin{cor}
   Let $\slam$ be a weighted shift on a directed tree
$\tcal$ with nonzero weights
$\lambdab=\{\lambda_v\}_{v \in V^\circ}$ such that
$\escr \subseteq \dzn{\slam}$. Then $\slam$ is
subnormal provided that one of the following two
conditions holds\/{\em :}
   \begin{enumerate}
   \item[(i)] $\tcal$ is rootless and
there exists a system $\{\mu_v\}_{v \in V}$ of Borel
probability measures on $\rbb_+$ which satisfies the
following equality for every $u \in V$,
   \begin{align} \label{wz1+}
\mu_u(\sigma) = \sum_{v\in \dzi{u}} |\lambda_v|^2
\int_{\sigma} \frac 1 s \D \mu_v(s), \quad \sigma \in
\borel{\rbb_+},
   \end{align}
   \item[(ii)] $\tcal$ has a root and
there exist $\varepsilon \in \rbb_+$ and a system
$\{\mu_v\}_{v \in V}$ of Borel probability measures on
$\rbb_+$ which satisfy \eqref{wz1+} for every $u \in
V^\circ$, and
   \begin{align*}
\mu_{\koo}(\sigma) = \sum_{v\in \dzi{\koo}}
|\lambda_v|^2 \int_{\sigma} \frac 1 s \D \mu_v(s) +
\varepsilon \delta_0(\sigma), \quad \sigma \in
\borel{\rbb_+}.
   \end{align*}
   \end{enumerate}
   \end{cor}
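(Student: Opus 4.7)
The plan is to deduce both assertions directly from Theorem \ref{main} by exhibiting a suitable system $\{\varepsilon_v\}_{v\in V}$ of nonnegative real numbers so that the equation \eqref{muu+} (which is the sole extra ingredient Theorem \ref{main} requires beyond what the corollary gives) holds at every vertex. The remark just above the corollary is the key heuristic: for nonzero weights, Lemma \ref{lem1}(ii)--(iii) forces $\varepsilon_v=0$ at every non-root vertex, so in the hypothesis of Theorem \ref{main} the only $\varepsilon$ that can be nonzero is the one attached to the root (if a root exists).

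For case (i), $\tcal$ is rootless, so $V^\circ=V$. I would simply put $\varepsilon_v:=0$ for every $v\in V$; then \eqref{muu+} at an arbitrary $u\in V$ collapses to \eqref{wz1+}, which holds by hypothesis. The system $\{\mu_v\}_{v\in V}$ is already a system of Borel probability measures by assumption, and $\escr\subseteq\dzn{\slam}$ was assumed, so every hypothesis of Theorem \ref{main} is fulfilled and subnormality of $\slam$ follows.

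For case (ii), $\tcal$ has the unique root $\koo$ and $V=V^\circ\sqcup\{\koo\}$. I would set $\varepsilon_v:=0$ for every $v\in V^\circ$ and $\varepsilon_{\koo}:=\varepsilon$ (which lies in $\rbb_+$ by assumption, as required). For each $u\in V^\circ$, the identity \eqref{muu+} then reduces to \eqref{wz1+}, which holds by hypothesis; at $u=\koo$, the identity \eqref{muu+} is precisely the additional equation for $\mu_{\koo}$ stated in (ii). Again, Theorem \ref{main} applies verbatim and yields subnormality of $\slam$.

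There is no real obstacle: the corollary is essentially a bookkeeping reformulation of Theorem \ref{main} specialized to the nonzero-weight setting, where Lemma \ref{lem1}(ii)--(iii) reveals that the ``$\varepsilon$-data'' is trivial away from the root. Consequently nothing beyond choosing $\varepsilon_v$ as above and invoking Theorem \ref{main} is needed in either case.
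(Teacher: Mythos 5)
Your proof is correct and follows exactly the route the paper intends: the corollary is stated as an immediate specialization of Theorem \ref{main}, justified by the remark preceding it, and your choice of $\varepsilon_v:=0$ for $v\in V^\circ$ (with $\varepsilon_{\koo}:=\varepsilon$ in the rooted case) is precisely the bookkeeping needed to reduce \eqref{wz1+} and the root equation to \eqref{muu+}. Nothing further is required.
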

   \subsection{\label{cfs}Quasi-analytic vectors}
Let $S$ be an operator in a complex Hilbert space
$\hh$. We say that a vector $f\in\dzn{S}$ is a {\em
quasi-analytic} vector of $S$ if
   \begin{align*}
\sum_{n=1}^\infty \frac{1}{\|S^n
f\|^{\nicefrac{1}{n}}} = \infty \quad
\text{(convention: $\frac{1}{0}=\infty$)}.
   \end{align*}
Denote by $\quasi{S}$ the set of all quasi-analytic
vectors. Note that (cf.\ \cite[Section 9]{StSz1})
   \begin{align} \label{quasiinv}
   S(\quasi{S}) \subseteq \quasi{S}.
   \end{align}
In general, $\quasi{S}$ is not a linear subspace of
$\hh$ even if $S$ is essentially selfadjoint (see
\cite{ru2}; see also \cite{ru1} for related matter).

We now show that the converse of the implication in
Proposition \ref{necess} holds for weighted shifts on
directed trees having sufficiently many quasi-analytic
vectors, and that within this class of operators
subnormality is completely characterized by the
existence of a consistent system of probability
measures.
   \begin{thm} \label{main-0}
   Let $\slam$ be a weighted shift on a directed tree
$\tcal$ with weights $\lambdab=\{\lambda_v\}_{v \in
V^\circ}$ such that $\escr \subseteq \quasi{\slam}$.
Then the following conditions are equivalent{\em :}
   \begin{enumerate}
   \item[(i)] $\slam$ is subnormal,
   \item[(ii)] $\{\|\slam^n e_u\|^2\}_{n=0}^\infty$ is
a Stieltjes moment sequence for every $u \in V$,
   \item[(iii)] there exist a system $\{\mu_v\}_{v \in V}$ of Borel
probability measures on $\rbb_+$ and a system
$\{\varepsilon_v\}_{v \in V}$ of nonnegative real
numbers that satisfy \eqref{muu+} for every $u
\in V$.
   \end{enumerate}
   \end{thm}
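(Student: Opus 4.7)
The plan is to reduce the theorem to three pieces of machinery already in place, letting the quasi-analyticity hypothesis enter only through one decisive step: turning the Stieltjes moment sequences attached to the basic vectors into \emph{determinate} ones. Since $\escr \subseteq \quasi{\slam} \subseteq \dzn{\slam}$, the implication (i)$\Rightarrow$(ii) is immediate from Proposition \ref{necess}, and (iii)$\Rightarrow$(i) is immediate from Theorem \ref{main}. Thus the whole content of the theorem lies in (ii)$\Rightarrow$(iii), and the natural route is via Lemma \ref{2necess+}.

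Recall that Lemma \ref{2necess+} requires two things for each $u \in V^\prime$: that $\{\|\slam^n e_u\|^2\}_{n=0}^\infty$ be a Stieltjes moment sequence, which is supplied verbatim by (ii), and that $\{\|\slam^{n+1} e_u\|^2\}_{n=0}^\infty$ be a \emph{determinate} Stieltjes moment sequence. The Stieltjes property of the shifted sequence follows for free from (ii) together with \eqref{st+1}. Hence the only genuine task is to upgrade this shifted sequence to being determinate for every $u \in V$, and this is exactly where the assumption $\escr \subseteq \quasi{\slam}$ is used.

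The mechanism is the invariance \eqref{quasiinv}: since $e_u \in \quasi{\slam}$, also $\slam e_u \in \quasi{\slam}$, which, after unfolding definitions, reads
\[
\sum_{n=1}^\infty \frac{1}{\|\slam^{n+1} e_u\|^{1/n}} \;=\; \sum_{n=1}^\infty \frac{1}{\|\slam^n(\slam e_u)\|^{1/n}} \;=\; \infty.
\]
This is precisely Carleman's divergence condition $\sum t_n^{-1/(2n)} = \infty$ applied to $t_n = \|\slam^{n+1} e_u\|^2$, which forces the Hamburger problem for $\{t_n\}$ to be determinate and therefore the Stieltjes problem to be determinate as well. Degenerate situations are harmless: if some $\|\slam^{k} e_u\|$ vanishes, the paper's convention $\tfrac{1}{0}=\infty$ makes the series trivially divergent, while the resulting eventually-zero moment sequence is trivially determinate. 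Plugging this determinacy, along with (ii), into Lemma \ref{2necess+} yields the desired system $\{\mu_v\}_{v \in V}$ of Borel probability measures together with $\{\varepsilon_v\}_{v \in V}$ satisfying \eqref{muu+} at every vertex, which is condition (iii).

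The only step that is not pure bookkeeping is the appeal to Carleman; one must invoke it in the Stieltjes (not merely Hamburger) form and verify that quasi-analyticity of $\slam e_u$ really does produce the right divergent series for the shifted sequence $\{\|\slam^{n+1} e_u\|^2\}$. Once this is in place, the three implications slot together mechanically, and the theorem follows. It is also worth noting, as a sanity check tying back to Corollary \ref{necessdet2}, that the determinacy we extract here is exactly the extra hypothesis imposed there, so Theorem \ref{main-0} can be viewed as the statement that quasi-analyticity guarantees this determinacy hypothesis automatically.
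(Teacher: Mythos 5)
Your proposal is correct and follows essentially the same route as the paper: (i)$\Rightarrow$(ii) via Proposition \ref{necess}, (iii)$\Rightarrow$(i) via Theorem \ref{main}, and (ii)$\Rightarrow$(iii) by using \eqref{quasiinv} to get $\slam e_u \in \quasi{\slam}$, reading this as the Carleman divergence condition for $t_n = \|\slam^{n+1}e_u\|^2$, concluding determinacy, and then invoking Lemma \ref{2necess+}. This is precisely the paper's argument.
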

   \begin{proof}
(i)$\Rightarrow$(ii) Apply Proposition \ref{necess}.

(ii)$\Rightarrow$(iii) Fix $u \in V$ and set $t_n =
\|\slam^{n+1} e_u\|^2$ for $n \in \zbb_+$. By
\eqref{st+1}, the sequence $\{t_n\}_{n=0}^\infty$ is a
Stieltjes moment sequence. Since $e_u \in
\quasi{\slam}$, we infer from \eqref{quasiinv} that
$\slam e_u \in \quasi{\slam}$, or equivalently that
$\sum_{n=1}^\infty t_n^{-\nicefrac{1}{2n}} = \infty$.
Hence, by the Carleman criterion for determinacy of
Stieltjes moment sequences\footnote{\;In fact, one can
prove that a Stieltjes moment sequence
$\{t_n\}_{n=0}^\infty$ for which $\sum_{n=1}^\infty
t_n^{-\nicefrac{1}{2n}} = \infty$ is determinate as a
Hamburger moment sequence, which means that there
exists only one positive Borel measure on $\rbb$ which
represents the sequence $\{t_n\}_{n=0}^\infty$ (cf.\
\cite[Corollary 4.5]{sim}).} (cf.\ \cite[Theorem
1.11]{sh-tam}), the Stieltjes moment sequence
$\{t_n\}_{n=0}^\infty = \{\|\slam^{n+1}
e_u\|^2\}_{n=0}^\infty$ is determinate. Now applying
Lemma \ref{2necess+} yields (iii).

(iii)$\Rightarrow$(i) Employ Theorem \ref{main}.
   \end{proof}

Using \cite[Theorem 7]{StSz1}, one can prove a version
of Theorem \ref{main-0} in which the class of
quasi-analytic vectors is replaced by the class of
analytic ones. Since the former class is
larger\footnote{\;In general, the class of analytic
vectors of an operator $S$ is essentially smaller than
the class of quasi-analytic vectors of $S$ even for
essentially selfadjoint operators $S$ (cf.\
\cite{ru1}).} than the latter, we see that
``analytic'' version of Theorem \ref{main-0} is weaker
than Theorem \ref{main-0} itself. To the best of our
knowledge, Theorem \ref{main-0} is the first result of
this kind; it shows that the unbounded version of
Lambert's characterization of subnormality happens to
be true for operators that have sufficiently many
quasi-analytic vectors.

The following result, which is an immediate
consequence of Theorem \ref{main-0}, provides a
new characterization of subnormality of bounded
weighted shifts on directed trees written in
terms of consistent systems of probability
measures. It may be thought of as a complement to
Theorem \ref{charsub}.
   \begin{cor}
Let $\slam \in \ogr{\ell^2(V)}$ be a weighted
shift on a directed tree $\tcal$ with weights
$\lambdab=\{\lambda_v\}_{v \in V^\circ}$. Then
$\slam$ is subnormal if and only if there exist a
system $\{\mu_v\}_{v \in V}$ of Borel probability
measures on $\rbb_+$ and a system
$\{\varepsilon_v\}_{v \in V}$ of nonnegative real
numbers that satisfy \eqref{muu+} for every $u
\in V$.
   \end{cor}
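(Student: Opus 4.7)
The plan is to reduce this Corollary to the more powerful Theorem \ref{main-0}. Since $\slam$ is everywhere defined and bounded, every basic vector $e_u$ belongs to $\dzn{\slam}$ and satisfies $\|\slam^n e_u\| \Le \|\slam\|^n$ for every $n \in \zbb_+$. Consequently,
\begin{align*}
\sum_{n=1}^\infty \frac{1}{\|\slam^n e_u\|^{\nicefrac{1}{n}}} \Ge \sum_{n=1}^\infty \frac{1}{\|\slam\|} = \infty
\end{align*}
(with the natural modification if $\slam = 0$, in which case $\|\slam^n e_u\| = 0$ for $n \Ge 1$ and the convention $\frac{1}{0} = \infty$ still gives a divergent series). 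Thus $e_u \in \quasi{\slam}$ for every $u \in V$, and so $\escr = \lin\{e_u : u \in V\} \subseteq \quasi{\slam}$, since to verify quasi-analyticity of $e_u$ we only need $e_u$ itself, not the whole linear span.

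With the hypothesis $\escr \subseteq \quasi{\slam}$ in hand, the equivalence (i)$\Leftrightarrow$(iii) of Theorem \ref{main-0} applies directly and yields the desired characterization: $\slam$ is subnormal if and only if there exist systems $\{\mu_v\}_{v \in V}$ of Borel probability measures on $\rbb_+$ and $\{\varepsilon_v\}_{v \in V}$ of nonnegative real numbers satisfying \eqref{muu+} for every $u \in V$.

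There is essentially no obstacle here, since the corollary is advertised as an ``immediate consequence'' of Theorem \ref{main-0}. The only small point worth mentioning is that the definition of $\quasi{\slam}$ requires membership in $\dzn{\slam}$, which is automatic for bounded $\slam$; and the fact that $\escr \subseteq \quasi{\slam}$ is a statement about individual basic vectors rather than about $\escr$ being a linear subspace of $\quasi{\slam}$, which is fortunate because (as the paper recalls) $\quasi{\slam}$ need not be a linear subspace in general.
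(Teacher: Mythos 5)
Your proposal follows exactly the route the paper intends (the paper offers no written proof beyond calling the corollary ``an immediate consequence of Theorem \ref{main-0}''), and the substance is right: boundedness of $\slam$ forces quasi-analyticity, after which (i)$\Leftrightarrow$(iii) of Theorem \ref{main-0} gives the claim. One point in your justification is off, though. The hypothesis of Theorem \ref{main-0} is $\escr \subseteq \quasi{\slam}$, and $\escr$ is by definition the \emph{linear span} $\lin\{e_u \colon u \in V\}$; so the inclusion is a statement about every finite linear combination of the $e_u$, not only about the individual basis vectors, and your closing remark that it ``is a statement about individual basic vectors'' is not correct as written --- precisely because $\quasi{\slam}$ need not be a linear subspace, checking the generators does not by itself give the span. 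The repair is immediate and uses nothing new: for any $f \in \ell^2(V)$ one has $\|\slam^n f\| \Le \|\slam\|^n \|f\|$, hence $\|\slam^n f\|^{\nicefrac{1}{n}} \Le \|\slam\|\,\|f\|^{\nicefrac{1}{n}} \to \|\slam\|$, so the series $\sum_{n\Ge 1} \|\slam^n f\|^{-\nicefrac{1}{n}}$ has terms bounded below by a positive constant for large $n$ (with the convention $\frac10=\infty$ covering the degenerate cases). Thus $\quasi{\slam} = \ell^2(V)$ for bounded $\slam$, and in particular $\escr \subseteq \quasi{\slam}$; with that correction your argument is complete.
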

   \subsection{Subnormality via subtrees}
Let $\slam$ be a weighted shift on a directed tree
$\tcal$ with weights $\lambdab=\{\lambda_v\}_{v\in
V^\circ}$. Note that if $u \in V$, then the space
$\ell^2(\des{u})$ (which is regarded as a closed
linear subspace of $\ell^2(V)$) is invariant for
$\slam$, i.e.,
   \begin{align} \label{ilb}
\slam\big(\dz{\slam} \cap \ell^2(\des{u})\big)
\subseteq \ell^2(\des{u}).
   \end{align}
(For this, apply \eqref{lamtauf} and the inclusion
$\paa\big(V\setminus \big(\des{u} \cup
\Ko{\tcal}\big)\big) \subseteq V\setminus \des{u}$.)
Denote by $\slam|_{\ell^2(\des{u})}$ the operator in
$\ell^2(\des{u})$ given by
$\dz{\slam|_{\ell^2(\des{u})}} = \dz{\slam} \cap
\ell^2(\des{u})$ and $\slam|_{\ell^2(\des{u})}f =
\slam f$ for $f \in \dz{\slam|_{\ell^2(\des{u})}}$. It
is easily seen that $\slam|_{\ell^2(\des{u})}$
coincides with the weighted shift on the directed tree
$(\des{u}, (\des{u}\times \des{u}) \cap E)$ with
weights $\{\lambda_v\}_{v \in \des{u}\setminus \{u\}}$
(see \cite[Proposition 2.1.8]{j-j-s} for more details
on this and related subtrees).

Proposition \ref{subtree} below shows that the study
of subnormality of weighted shifts on rootless
directed trees can be reduced in a sense to the case
of directed trees with root. Unfortunately, our
criteria for subnormality of weighted shifts on
directed trees are not applicable in this context.
Fortunately, we can employ the inductive limit
approach to subnormality provided by Proposition
\ref{tw1+1}.
   \begin{pro}\label{subtree} Let $\slam$ be a weighted
shift on a rootless directed tree $\tcal$ with weights
$\lambdab=\{\lambda_v\}_{v\in V^\circ}$. Suppose that
$\escr \subseteq \dzn{\slam}$. If $\varOmega$ is a
subset of $V$ such that $V=\bigcup_{\omega\in
\varOmega} \des{\omega}$, then the following
conditions are equivalent{\em :}
   \begin{enumerate}
   \item[(i)] $\slam$ is subnormal,
   \item[(ii)] for every $\omega\in \varOmega$,
$\slam|_{\ell^2(\des{\omega})}$ is subnormal as an
operator acting in $\ell^2(\des{\omega})$.
   \end{enumerate}
   \end{pro}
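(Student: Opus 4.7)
The plan is to prove the two implications independently, with (i)$\Rightarrow$(ii) being a routine invariant-subspace argument and (ii)$\Rightarrow$(i) going through Proposition \ref{tw1+1}.

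For (i)$\Rightarrow$(ii), fix $\omega \in \varOmega$ and set $\hh_\omega := \ell^2(\des{\omega})$. By \eqref{ilb}, $\hh_\omega$ is a closed $\slam$-invariant subspace of $\ell^2(V)$; moreover, the linear span of $\{e_v\colon v\in\des{\omega}\}\subseteq \escr\cap \hh_\omega$ lies in $\dz{\slam|_{\hh_\omega}}$ and is dense in $\hh_\omega$, so $\slam|_{\hh_\omega}$ is densely defined. If $N$ is a normal extension of $\slam$ in some $\kk \supseteq \ell^2(V) \supseteq \hh_\omega$, then the very same $N$ extends $\slam|_{\hh_\omega}$, which witnesses its subnormality.

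For (ii)$\Rightarrow$(i), I would apply Proposition \ref{tw1+1} with the index set $\varOmega$, taking $\hh_\omega := \ell^2(\des{\omega})$ and $\xx_\omega := \{e_v\colon v \in \des{\omega}\}$. Conditions (i)--(iv) of Proposition \ref{tw1+1} follow routinely: the inclusion $\xx_\omega \subseteq \escr \subseteq \dzn{\slam}$ together with Lemma \ref{lem4}(ii) gives $\slam^n(\xx_\omega) \subseteq \hh_\omega$, because $\dzin{n}{v}\subseteq \des{v}\subseteq\des{\omega}$ for $v\in\des{\omega}$; density of $\ff_\omega$ in $\hh_\omega$ is automatic since $\ff_\omega \supseteq \lin \xx_\omega$; subnormality of $\slam|_{\ff_\omega}$ follows from that of $\slam|_{\hh_\omega}$ (granted by (ii)) by the same extension-by-restriction argument as in the first paragraph; finally, $V = \bigcup_{\omega \in \varOmega} \des{\omega}$ forces $\bigcup_{\omega \in \varOmega} \xx_\omega = \{e_v\colon v \in V\}$, so $\ff \supseteq \escr$ is a core of $\slam$ by Proposition \ref{bas}(iv).

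The only genuinely non-routine point, and the place where rootlessness of $\tcal$ is used, is the verification that $\{\xx_\omega\}_{\omega \in \varOmega}$ is upward directed. Given $\omega_1, \omega_2 \in \varOmega$, both upward trajectories $\{\paa^k(\omega_i)\}_{k=0}^\infty$ are well-defined for all $k$ because $\tcal$ has no root, and since $\tcal$ is connected and circuit-free, the unique undirected path in $\tcal$ joining $\omega_1$ to $\omega_2$ has a highest vertex $w \in V$, giving $\omega_1, \omega_2 \in \des{w}$. Using the covering hypothesis, pick $\omega_3 \in \varOmega$ with $w \in \des{\omega_3}$; then $\des{\omega_i} \subseteq \des{w} \subseteq \des{\omega_3}$ for $i=1,2$, hence $\xx_{\omega_1} \cup \xx_{\omega_2} \subseteq \xx_{\omega_3}$, as required. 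This is the sole structural obstacle, and once it is in place Proposition \ref{tw1+1} delivers subnormality of $\slam$.
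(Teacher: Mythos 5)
Your proof is correct and follows essentially the same route as the paper: both directions are handled by restricting a normal extension to the invariant subspaces $\ell^2(\des{\omega})$ and by applying Proposition \ref{tw1+1} with $\hh_\omega=\ell^2(\des{\omega})$ and $\xx_\omega$ the basis vectors indexed by $\des{\omega}$. The only difference is that where the paper cites \cite[Proposition 2.1.4]{j-j-s} for the upward-directedness of $\{\xx_\omega\}_{\omega\in\varOmega}$, you supply a correct self-contained argument via the turning point of the unique undirected path joining $\omega_1$ and $\omega_2$.
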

   \begin{proof}
(ii)$\Rightarrow$(i) Using an induction argument and
\eqref{ilb} one can show that $\slam^n e_v \in
\ell^2(\des{v}) \subseteq \ell^2(\des{u})$ for all
$n\in \zbb_+$, $v \in \des{u}$ and $u \in V$. Hence
   \begin{align*}
\xx_\omega := \lin\big\{e_v\colon v \in
\des{\omega}\big\} \subseteq \dzn{\slam} \text{ and }
\slam^n(\xx_\omega) \subseteq \ell^2(\des{\omega})
   \end{align*}
for all $\omega \in \varOmega$ and $n\in \zbb_+$. It
follows from \cite[Proposition 2.1.4]{j-j-s} and the
equality $V=\bigcup_{\omega\in \varOmega}
\des{\omega}$ that for each pair $(\omega_1,\omega_2)
\in \varOmega \times \varOmega$, there exists $\omega
\in \varOmega$ such that $\des{\omega_1} \cup
\des{\omega_2} \subseteq \des{\omega}$, and thus
$\{\xx_\omega\}_{\omega \in \varOmega}$ is an upward
directed family of subsets of $\ell^2(V)$. By applying
Proposition \ref{bas}(iv) and Proposition \ref{tw1+1}
to $S=\slam$ and $\hh_\omega= \ell^2(\des{\omega})$,
we get (i).

The reverse implication (i)$\Rightarrow$(ii) is
obvious because $\xx_\omega \subseteq
\dz{\slam|_{\ell^2(\des{\omega})}}$.
   \end{proof}
It follows from \cite[Proposition 2.1.6]{j-j-s} that
if $\tcal$ is a rootless directed tree, then
$V=\bigcup_{k=1}^\infty \des{\paa^k(u)}$ for every $u
\in V$, and so the set $\varOmega$ in Proposition
\ref{subtree} may always be chosen to be countable and
infinite.
   \section{Subnormality on Assorted Directed Trees}
   \subsection{\label{cws}Classical weighted shifts}
By a {\em classical weighted shift} we mean either a
unilateral weighted shift $S$ in $\ell^2$ or a
bilateral weighted shift $S$ in $\ell^2(\zbb)$, i.e.,
$S=VD$, where, in the unilateral case, $V$ is the
unilateral isometric shift on $\ell^2$ of multiplicity
$1$ and $D$ is a diagonal operator in $\ell^2$ with
diagonal elements $\{\lambda_n\}_{n=0}^\infty$; in the
bilateral case, $V$ is the bilateral unitary shift on
$\ell^2(\zbb)$ of multiplicity $1$ and $D$ is a
diagonal operator in $\ell^2(\zbb)$ with diagonal
elements $\{\lambda_n\}_{n=-\infty}^\infty$. In view
of \cite[equality (1.7)]{ml}, $S$ is a unique closed
linear operator in $\ell^2$ (respectively:\
$\ell^2(\zbb)$) such that the linear span of the
standard orthonormal basis $\{e_n\}_{n=0}^\infty$ of
$\ell^2$ (respectively:\ $\{e_n\}_{n=-\infty}^\infty$
of $\ell^2(\zbb)$) is a core of $S$ and
   \begin{align} \label{notold}
S e_n = \lambda_n e_{n+1}, \quad n\in \zbb_+ \;\;
(\textrm{respectively:\ } n \in \zbb).
   \end{align}
This fact, combined with parts (ii), (iii) and (iv) of
Proposition \ref{bas}, implies that a unilateral
(respectively:\ a bilateral) classical weighted shift
is a weighted shift on the directed tree $(\zbb_+,
\{(n,n+1)\colon n \in \zbb_+\})$ (respectively:\
$(\zbb, \{(n,n+1)\colon n \in \zbb\})$) with weights
$\{\lambda_{n-1}\}_{n=1}^\infty$ (respectively:\
$\{\lambda_{n-1}\}_{n=-\infty}^\infty$). From now on
we enumerate weights of a classical weighted shift in
accordance with our notation relevant to these two
particular trees. This means that \eqref{notold} takes
now the form
   \begin{align} \label{notnew}
\slam e_n = \lambda_{n+1} e_{n+1}, \quad n\in \zbb_+
\;\; (\textrm{respectively:\ } n \in \zbb),
   \end{align}
where $\lambdab=\{\lambda_{n}\}_{n=1}^\infty$
(respectively:\
$\lambdab=\{\lambda_{n}\}_{n=-\infty}^\infty$).

Using our approach, we can derive the
Berger-Gellar-Wallen criterion for subnormality of
injective unilateral classical weighted shifts (see
\cite{g-w,hal2} for the bound\-ed case and
\cite[Theorem 4]{StSz1} for the unbounded one).
   \begin{thm} \label{b-g-w}
If $\slam$ is a unilateral classical weighted shift
with nonzero weights $\lambdab =
\{\lambda_n\}_{n=1}^\infty$ $($with notation as in
\eqref{notnew}$)$, then the following three conditions
are equivalent\/{\em :}
   \begin{enumerate}
   \item[(i)] $\slam$ is subnormal,
   \item[(ii)]  $\{1, |\lambda_1|^2, |\lambda_1
\lambda_2|^2, |\lambda_1 \lambda_2 \lambda_3|^2,
\ldots\}$ is a Stieltjes moment sequence,
   \item[(iii)]
$\{\|\slam^n e_k\|^2\}_{n=0}^\infty$ is a Stieltjes
moment sequence for all $k \in \zbb_+$.
   \end{enumerate}
   \end{thm}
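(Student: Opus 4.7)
The plan is to exploit the fact that the underlying directed tree for a unilateral classical weighted shift is $(\zbb_+, \{(n,n+1)\colon n \in \zbb_+\})$, which is a linear chain rooted at $0$ in which every vertex has exactly one child, namely $\dzi{k}=\{k+1\}$. Since all weights are nonzero, repeated application of \eqref{eu} together with Lem\-ma \ref{lem4}(ii)--(iii) gives $\slam^n e_k = \lambda_{k+1}\cdots\lambda_{k+n} e_{k+n}$ for every $k,n\in\zbb_+$, so in particular $\escr \subseteq \dzn{\slam}$, and
\begin{align*}
\|\slam^n e_k\|^2 = |\lambda_{k+1}\lambda_{k+2}\cdots\lambda_{k+n}|^2, \quad k,n\in\zbb_+,
\end{align*}
with the usual empty-product convention $\|\slam^0 e_k\|^2 = 1$.

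First I would observe that (i)$\Rightarrow$(iii) is exactly Proposition \ref{necess} (using $\escr\subseteq\dzn{\slam}$), and that (iii)$\Rightarrow$(ii) follows by specializing (iii) to $k=0$, since $\|\slam^n e_0\|^2 = |\lambda_1\cdots\lambda_n|^2$ by the displayed formula above, which is precisely the $n$-th term of the sequence in (ii).

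The substantive step is (ii)$\Rightarrow$(i), and my plan is to build a consistent system of probability measures by iterating Lemma \ref{charsub-1}. Pick any representing measure $\mu_0$ of the Stieltjes moment sequence $\{\|\slam^n e_0\|^2\}_{n=0}^\infty$ provided by (ii). Since $\dzi{k}=\{k+1\}$, $\lambda_{k+1}\neq 0$ and $e_k\in\dzn{\slam}$, Lemma \ref{charsub-1} applied with $u_0=k$ and $u_1=k+1$ yields, by induction on $k$, a sequence $\{\mu_k\}_{k\in\zbb_+}$ of Borel probability measures on $\rbb_+$ defined by
\begin{align*}
\mu_{k+1}(\sigma) = \frac{1}{|\lambda_{k+1}|^2}\int_\sigma s\,\D\mu_k(s), \quad \sigma\in\borel{\rbb_+},
\end{align*}
with $\mu_{k+1}\in\mm_{k+1}^{\mathrm b}(\lambdab)$ a representing measure of $\{\|\slam^n e_{k+1}\|^2\}_{n=0}^\infty$ satisfying $\int_0^\infty\frac{1}{s}\D\mu_{k+1}(s)\Le\frac{1}{|\lambda_{k+1}|^2}$. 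Setting $\varepsilon_k := 1 - |\lambda_{k+1}|^2\int_0^\infty\frac{1}{s}\D\mu_{k+1}(s) \Ge 0$, the fact that $\rho\mapsto\mu_\rho$ and $\mu\mapsto\rho_\mu$ in Lemma \ref{charsub-1}(i) are mutually inverse tells us that $\rho_{\mu_{k+1}} = \mu_k$, which is exactly the consistency relation
\begin{align*}
\mu_k(\sigma) = |\lambda_{k+1}|^2\int_\sigma\frac{1}{s}\D\mu_{k+1}(s) + \varepsilon_k\delta_0(\sigma), \quad \sigma\in\borel{\rbb_+}.
\end{align*}
Since $\dzi{k}=\{k+1\}$, this is precisely \eqref{muu+} at every vertex $u=k\in V=\zbb_+$. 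Theorem \ref{main} then delivers subnormality of $\slam$, completing the cycle.

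I do not anticipate a serious obstacle: the fact that every vertex has a single child turns the potentially delicate consistency condition into a one-dimensional recursion, and Lemma \ref{charsub-1} has been tailored exactly for this situation. The only point that needs a moment of attention is ensuring $\varepsilon_k\Ge 0$ and that each $\mu_k$ is a probability measure; both are built into the bijection $\mm^{\mathrm b}_{u_1}(\lambdab)\leftrightarrow\mm_{u_0}(\lambdab)$ supplied by Lemma \ref{charsub-1}(i), and the probability normalization follows from $\int_{\rbb_+} s\,\D\mu_k(s) = \|\slam e_k\|^2 = |\lambda_{k+1}|^2$.
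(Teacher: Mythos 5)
Your proposal is correct and follows essentially the same route as the paper: the nontrivial implication (ii)$\Rightarrow$(i) is handled by producing a consistent system of probability measures satisfying \eqref{muu+} at every vertex and invoking Theorem \ref{main}, while (i)$\Rightarrow$(iii) and (iii)$\Rightarrow$(ii) are disposed of exactly as in the paper. The only cosmetic difference is that the paper writes the measures in closed form, $\mu_n(\sigma)=\|\slam^n e_0\|^{-2}\int_\sigma s^n\,\D\mu(s)$, and checks the consistency relation directly, whereas you generate the same measures recursively through the bijection of Lemma \ref{charsub-1}; unrolling your recursion reproduces the paper's formula.
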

   \begin{proof}
First note that $\escr \subseteq \dzn{\slam}$.

(i)$\Rightarrow$(iii) Employ Proposition \ref{necess}.

(iii)$\Rightarrow$(ii) This is evident, because the
sequence $\{1, |\lambda_1|^2, |\lambda_1 \lambda_2|^2,
|\lambda_1 \lambda_2 \lambda_3|^2, \ldots\}$ coincides
with $\{\|\slam^n e_0\|^2\}_{n=0}^\infty$.

(ii)$\Rightarrow$(i) Let $\mu$ be a representing measure of
the Stieltjes moment sequence $\{\|\slam^n
e_0\|^2\}_{n=0}^\infty$ (which in general may not be
determinate, cf.\ \cite{sz3}). Define the sequence
$\{\mu_n\}_{n=0}^\infty$ of Borel probability measures on
$\rbb_+$ by
   \begin{align*}
\mu_n(\sigma) = \frac{1}{\|\slam^n e_0\|^2}
\int_{\sigma} s^n \D \mu(s), \quad \sigma \in
\borel{\rbb_+}, \, n \in \zbb_+.
   \end{align*}
It is then clear that
   \begin{align*}
\mu_0(\sigma) &=|\lambda_{1}|^2 \int_\sigma
\frac{1}{s} \D\mu_{1}(s) + \mu(\{0\}) \delta_0
(\sigma), \quad \sigma \in \borel{\rbb_+},
   \\
\mu_n(\sigma) &= |\lambda_{n+1}|^2 \int_\sigma
\frac{1}{s} \D\mu_{n+1}(s), \quad \sigma \in
\borel{\rbb_+},\, n \Ge 1,
   \end{align*}
which means that the systems $\{\mu_n\}_{n=0}^\infty$
and $\{\varepsilon_n\}_{n=0}^\infty := \{\mu(\{0\}),
0, 0, \ldots\}$ satisfy the assumptions of Theorem
\ref{main}. This completes the proof.
   \end{proof}
Before formulating the next theorem, we recall that a
two-sided sequence $\{t_n\}_{n=-\infty}^\infty$ of
real numbers is said to be a {\em two-sided Stieltjes
moment sequence} if there exists a positive Borel
measure $\mu$ on $(0,\infty)$ such that
   \begin{align*}
t_{n}=\int_{(0,\infty)} s^n \D\mu(s),\quad n \in \zbb;
   \end{align*}
$\mu$ is called a {\em representing measure} of
$\{t_n\}_{n=-\infty}^\infty$. It follows from
\cite[page 202]{ber} (see also \cite[Theorem
6.3]{j-t-w}) that
   \begin{align} \label{char2sid}
   \begin{minipage}{29em}
$\{t_n\}_{n=-\infty}^\infty \subseteq \rbb$ is a
two-sided Stieltjes moment sequence if and only if
$\{t_{n-k}\}_{n=0}^\infty$ is a Stieltjes moment
sequence for every $k \in \zbb_+$.
   \end{minipage}
   \end{align}
Now we show how to deduce an analogue of the
Berger-Gellar-Wallen criterion for subnormality of
injective bilateral classical weighted shifts from our
results (see \cite[Theorem II.6.12]{con2} for the
bounded case and \cite[Theorem 5]{StSz1} for the
unbounded~ one).
   \begin{thm} \label{b-g-w-2}
If $\slam$ is a bilateral classical weighted shift
with nonzero weights $\lambdab=\{\lambda_n\}_{n \in
\zbb}$ $($with notation as in \eqref{notnew}$)$, then
the following four conditions are equivalent\/{\em :}
   \begin{enumerate}
   \item[(i)] $\slam$ is subnormal,
   \item[(ii)] the two-sided sequence $\{t_n\}_{n=-\infty}^\infty$
defined by
   \begin{align*}
t_n =
   \begin{cases}
|\lambda_1 \cdots \lambda_{n}|^2 & \text{ for } n \Ge
1,
   \\
1 & \text{ for } n=0,
   \\
|\lambda_{n+1} \cdots \lambda_{0}|^{-2} & \text{ for }
n \Le -1,
   \end{cases}
   \end{align*}
is a two-sided Stieltjes moment sequence,
   \item[(iii)]
$\{\|\slam^n e_{-k}\|^2\}_{n=0}^\infty$ is a Stieltjes
moment sequence for infinitely many nonnegative
integers $k$,
   \item[(iv)]
$\{\|\slam^n e_k\|^2\}_{n=0}^\infty$ is a Stieltjes
moment sequence for all $k \in \zbb$.
   \end{enumerate}
   \end{thm}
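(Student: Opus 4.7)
The plan is to prove the four-way equivalence via the cycle (i)$\Rightarrow$(iv)$\Rightarrow$(iii)$\Rightarrow$(ii)$\Rightarrow$(i). The starting point is a computational lemma. From \eqref{notnew}, iteration gives $\slam^n e_k = \lambda_{k+1}\cdots\lambda_{k+n}\, e_{k+n}$, so $\escr \subseteq \dzn{\slam}$ and $\|\slam^n e_k\|^2 = |\lambda_{k+1}\cdots\lambda_{k+n}|^2$ for all $k\in\zbb$ and $n\Ge 1$. A short case analysis on the sign of $n$ shows that the defining recursion $t_{n+1}/t_n = |\lambda_{n+1}|^2$ holds for \emph{every} $n \in \zbb$, and a telescoping product yields the master identity
\begin{align*}
\|\slam^n e_k\|^2 = \frac{t_{n+k}}{t_k}, \quad k \in \zbb,\; n \in \zbb_+.
\end{align*}

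The easy implications are now immediate: (i)$\Rightarrow$(iv) is Proposition \ref{necess} applied to each basic vector $e_k$, and (iv)$\Rightarrow$(iii) is trivial. For (iii)$\Rightarrow$(ii), suppose $\{\|\slam^n e_{-k_j}\|^2\}_{n=0}^\infty$ is Stieltjes for an increasing sequence $k_1 < k_2 < \cdots$ in $\zbb_+$. The master identity together with the fact that multiplication by a positive constant preserves the Stieltjes property forces $\{t_{n-k_j}\}_{n=0}^\infty$ to be Stieltjes for every $j$. Iterating \eqref{st+1} upward shows $\{t_{n-k_j+m}\}_{n=0}^\infty$ is Stieltjes for every $m \in \zbb_+$, and letting $k_j \to \infty$ transfers this to every $k \in \zbb_+$. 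The criterion \eqref{char2sid} then delivers (ii).

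The substantive direction is (ii)$\Rightarrow$(i), and the plan here is to manufacture a consistent system of probability measures and invoke Theorem \ref{main}. Let $\mu$ be a representing measure of $\{t_n\}_{n\in\zbb}$ supported on $(0,\infty)$, and set
\begin{align*}
\mu_k(\sigma) = \frac{1}{t_k} \int_\sigma s^k \D \mu(s), \quad k \in \zbb,\; \sigma \in \borel{\rbb_+}.
\end{align*}
Each $\mu_k$ is a Borel probability measure on $\rbb_+$ with $\mu_k(\{0\})=0$ and $n$-th moment $t_{n+k}/t_k$. Since the underlying tree is $\zbb$ with $\dzi n = \{n+1\}$, the consistency condition \eqref{muu+} at the vertex $n$ collapses to a single equality, which the computation
\begin{align*}
|\lambda_{n+1}|^2 \int_\sigma \frac{1}{s} \D \mu_{n+1}(s) = \frac{t_{n+1}}{t_n}\cdot \frac{1}{t_{n+1}} \int_\sigma s^n \D \mu(s) = \mu_n(\sigma)
\end{align*}
verifies with $\varepsilon_n = 0$ for every $n \in \zbb$. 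Theorem \ref{main} then yields (i).

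The main obstacle is the shifting argument inside (iii)$\Rightarrow$(ii): the hypothesis supplies the Stieltjes property only along an unbounded set of negative shifts, so one must genuinely exploit that $k_j \to \infty$ in tandem with \eqref{st+1} in order to reach \emph{every} $k \in \zbb_+$ and thereby activate \eqref{char2sid}. Everything else reduces to careful bookkeeping with the recursion $|\lambda_{n+1}|^2 = t_{n+1}/t_n$ and the fact that a rootless single-child consistency chain forces each $\varepsilon_n$ to vanish.
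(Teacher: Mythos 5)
Your proof is correct, and on the substantive direction (ii)$\Rightarrow$(i) it coincides with the paper's argument: the same measures $\mu_k(\sigma)=t_k^{-1}\int_\sigma s^k\,\D\mu(s)$, the same verification that \eqref{muu+} holds with $\varepsilon_k\equiv 0$ on the rootless single-child tree $\zbb$, and the same appeal to Theorem \ref{main}; likewise your identity $\|\slam^n e_{-k}\|^2=t_{n-k}/t_{-k}$ feeding into \eqref{char2sid} is exactly how the paper gets (iv)$\Rightarrow$(ii). The one place you genuinely diverge is the treatment of (iii): the paper closes the cycle by proving (iii)$\Rightarrow$(iv) via the hereditary Lemma \ref{charsub-1} (hence via the backward-extension Lemma \ref{bext}, which passes the Stieltjes property from a parent to its only child), whereas you go directly (iii)$\Rightarrow$(ii) by shifting \emph{forward} with \eqref{st+1} from an unbounded set of indices $k_j$ and picking, for each $k\in\zbb_+$, some $k_j\Ge k$. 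Your variant is slightly more elementary --- it uses only the trivial half of the backward-extension machinery, namely \eqref{st+1}, together with the unboundedness of the index set --- while the paper's variant produces the intermediate fact that (iii) already forces the Stieltjes property at every single vertex, which has some independent interest. Both routes are valid and yield the same four-way equivalence.
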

   \begin{proof}
First note that $\escr \subseteq \dzn{\slam}$.

(i)$\Rightarrow$(iv) Employ Proposition \ref{necess}.

(iv)$\Rightarrow$(iii) Evident.

(iii)$\Rightarrow$(iv) Apply Lemma \ref{charsub-1}.

(iv)$\Rightarrow$(ii) Since $t_{n-k} = t_{-k}
\|\slam^n e_{-k}\|^2$ for all $n \in \zbb$ and $k\in
\zbb_+$, we can apply the criterion \eqref{char2sid}.

(ii)$\Rightarrow$(i) Let $\mu$ be a representing
measure of $\{t_n\}_{n=-\infty}^\infty$. Define the
two-sided sequence $\{\mu_n\}_{n=-\infty}^\infty$ of
Borel probability measures on $\rbb_+$ by (note that
$\mu(\{0\})=0$)
   \begin{align*}
\mu_n(\sigma) = \frac{1}{\|\slam^n e_0\|^2}
\int_{\sigma} s^n \D \mu(s), \quad \sigma \in
\borel{\rbb_+}, \, n \in \zbb.
   \end{align*}
We easily verify that
   \begin{align*}
\mu_n(\sigma) &= |\lambda_{n+1}|^2 \int_\sigma
\frac{1}{s} \D\mu_{n+1}(s), \quad \sigma \in
\borel{\rbb_+},\, n \in \zbb,
   \end{align*}
which means that the systems
$\{\mu_n\}_{n=-\infty}^\infty$ and
$\{\varepsilon_n\}_{n=-\infty}^\infty$ with
$\varepsilon_n\equiv 0$ satisfy the assumptions of
Theorem \ref{main}. This completes the proof.
   \end{proof}
It is worth mentioning that, in view of Theorems
\ref{b-g-w} and \ref{b-g-w-2}, the necessary
condition for subnormality of Hilbert space
operators that appeared in Proposition
\ref{necess-gen} (see also Proposition
\ref{necess}) turns out to be sufficient in the
case of injective classical weighted shifts. To
the best of our knowledge, the class of injective
classical weighted shifts seems to be the only
one for which this phenomenon occurs regardless
of whether or not the operators in question have
sufficiently many qusi-analytic vectors (see
\cite{StSz0} for more details; see also Sections
\ref{subs1} and \ref{cfs}).
   \subsection{\label{obv}One branching vertex}
Our next aim is to discuss subnormality of weighted
shifts with nonzero weights on leafless directed trees
that have only one branching vertex. Such directed
trees are one step more complicated than those
involved in the definitions of classical weighted
shifts (see Section \ref{cws}). By Proposition
\ref{hypcor}, there is no loss of generality in
assuming that $\card{V} = \aleph_0$. Infinite,
countable and leafless directed trees with one
branching vertex can be modelled as follows (see
Figure 1). Given $\eta,\kappa \in \zbb_+ \sqcup
\{\infty\}$ with $\eta \Ge 2$, we define the directed
tree $\tcal_{\eta,\kappa} = (V_{\eta,\kappa},
E_{\eta,\kappa})$ by
   \allowdisplaybreaks
   \begin{align*}
   \begin{aligned}
V_{\eta,\kappa} & = \big\{-k\colon k\in J_\kappa\big\}
\sqcup \{0\} \sqcup \big\{(i,j)\colon i\in J_\eta,\,
j\in \nbb\big\},
   \\
E_{\eta,\kappa} & = E_\kappa \sqcup
\big\{(0,(i,1))\colon i \in J_\eta\big\} \sqcup
\big\{((i,j),(i,j+1))\colon i\in J_\eta,\, j\in
\nbb\big\},
   \\
E_\kappa & = \big\{(-k,-k+1) \colon k\in
J_\kappa\big\},
   \end{aligned}
   \end{align*}
where $J_\iota := \{k \in \nbb\colon k\Le \iota\}$ for
$\iota \in \zbb_+ \sqcup \{\infty\}$.
   \vspace{1.5ex}
   \begin{center}
   \includegraphics[width=7cm]
{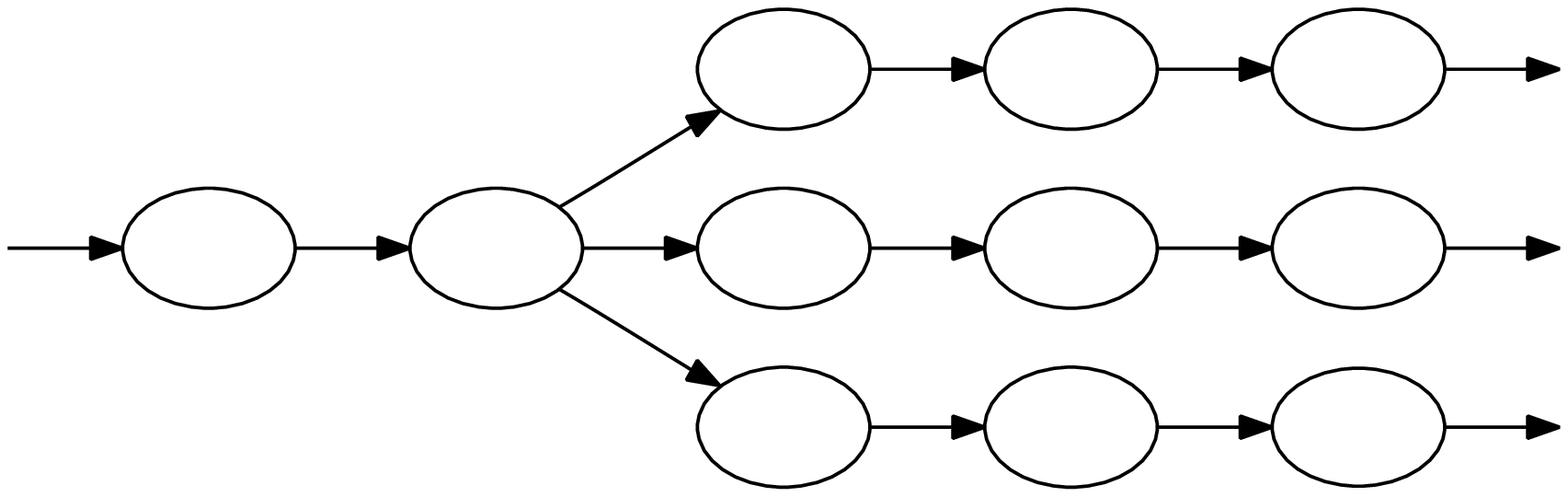}
   \\[1.5ex]
{\small {\sf Figure 1}}
   \end{center}
   \vspace{1ex}
   If $\kappa < \infty$, then the directed tree
$\tcal_{\eta,\kappa}$ has the root $-\kappa$. If
$\kappa=\infty$, then the directed tree
$\tcal_{\eta,\infty}$ is rootless. In all cases,
$0$ is the branching vertex of
$\tcal_{\eta,\kappa}$.

   We begin by proving criteria for subnormality of
weighted shifts on $\tcal_{\eta,\kappa}$ with nonzero
weights. Below, we adhere to the notation
$\lambda_{i,j}$ instead of a more formal expression
$\lambda_{(i,j)}$.
   \begin{thm}\label{omega2}
Let $\slam$ be a weighted shift on the directed tree
$\tcal_{\eta,\kappa}$ with nonzero weights $\lambdab =
\{\lambda_v\}_{v \in V_{\eta,\kappa}^\circ}$ such that
$e_0 \in \dzn{\slam}$. Suppose that there exists a
sequence $\{\mu_i\}_{i=1}^\eta$ of Borel probability
measures on $\rbb_+$ such that
   \begin{align} \label{zgod0}
\int_0^\infty s^n \D \mu_i(s) =
\Big|\prod_{j=2}^{n+1}\lambda_{i,j}\Big|^2, \quad n
\in \nbb, \; i \in J_\eta.
   \end{align}
Then $\slam$ is subnormal provided that one of the
following four conditions holds\/{\em :}
   \begin{enumerate}
   \item[(i)]  $\kappa=0$ and
   \begin{align}  \label{zgod}
\sum_{i=1}^\eta |\lambda_{i,1}|^2 \int_0^\infty \frac
1 s\, \D \mu_i(s) \Le 1,
   \end{align}
   \item[(ii)] $0 < \kappa < \infty$ and
   \begin{align} \label{zgod'}
\sum_{i=1}^\eta |\lambda_{i,1}|^2 \int_0^\infty \frac
1 s\, \D \mu_i(s) &= 1,
   \\
\Big|\prod_{j=0}^{l-1} \lambda_{-j}\Big|^2
\sum_{i=1}^\eta|\lambda_{i,1}|^2 \int_0^\infty \frac 1
{s^{l+1}} \D \mu_i(s) & = 1, \quad l \in J_{\kappa-1},
\label{widly1}
   \\
\Big|\prod_{j=0}^{\kappa-1}
\lambda_{-j}\Big|^2\sum_{i=1}^\eta|\lambda_{i,1}|^2
\int_0^\infty \frac 1 {s^{\kappa+1}} \D \mu_i(s) & \Le
1, \label{widly1'}
   \end{align}
   \item[(iii)] $0 < \kappa < \infty$ and there exists
a Borel probability measure $\nu$ on $\rbb_+$ such
that
   \begin{align} \label{prob}
\int_0^\infty s^n \D \nu(s) & =
\Big|\prod_{j=\kappa-n}^{\kappa-1}\lambda_{-j}\Big|^2,
\quad n \in J_\kappa,
   \\     \label{prob'}
\int_\sigma s^\kappa \D \nu(s) & =
\Big|\prod_{j=0}^{\kappa-1} \lambda_{-j}\Big|^2 \;
\sum_{i=1}^\eta |\lambda_{i,1}|^2 \int_\sigma
\frac{1}{s} \D \mu_i(s), \quad \sigma \in
\borel{\rbb_+},
   \end{align}
   \item[(iv)] $\kappa=\infty$ and equalities \eqref{zgod'}
and \eqref{widly1} are satisfied.
   \end{enumerate}
   \end{thm}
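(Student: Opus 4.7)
My plan is to apply Theorem~\ref{main}: in each case I will exhibit a system $\{\mu_v\}_{v\in V_{\eta,\kappa}}$ of Borel probability measures on $\rbb_+$ together with nonnegative reals $\{\varepsilon_v\}_{v\in V_{\eta,\kappa}}$ satisfying the consistency relation \eqref{muu+} at every vertex. The inclusion $\escr\subseteq\dzn{\slam}$ is automatic: every vertex of $\tcal_{\eta,\kappa}$ other than $0$ has a unique child, the branch norms $\|\slam^n e_{(i,j)}\|^2=|\prod_{k=2}^{j}\lambda_{i,k}|^{-2}\int_0^\infty s^{n+j-1}\D\mu_i(s)$ are finite by \eqref{zgod0}, and the stem norms satisfy $\|\slam^n e_{-l}\|^2=|\prod_{j=0}^{l-1}\lambda_{-j}|^2\,\|\slam^{n-l}e_0\|^2$ for $n\Ge l$ and are finite because $e_0\in\dzn{\slam}$; Proposition~\ref{bas}(ii) then gives the claim.

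On each branch I set
\[
\mu_{(i,j)}(\sigma):=\frac{1}{|\prod_{k=2}^{j}\lambda_{i,k}|^2}\int_\sigma s^{j-1}\D\mu_i(s),\qquad \varepsilon_{(i,j)}:=0,\qquad i\in J_\eta,\ j\in\nbb
\]
(with the convention that the empty product equals $1$, so $\mu_{(i,1)}=\mu_i$); by \eqref{zgod0} each $\mu_{(i,j)}$ is a probability measure, and the relation \eqref{muu+} at $(i,j)$, whose unique child is $(i,j+1)$, reduces to an algebraic identity. At the branching vertex I put
\[
\mu_0(\sigma):=\sum_{i=1}^\eta|\lambda_{i,1}|^2\int_\sigma\tfrac{1}{s}\D\mu_i(s)+\varepsilon_0\,\delta_0(\sigma),\quad \varepsilon_0:=1-\sum_{i=1}^\eta|\lambda_{i,1}|^2\int_0^\infty\tfrac{1}{s}\D\mu_i(s).
\]
In case (i) the inequality \eqref{zgod} guarantees $\varepsilon_0\Ge 0$ and there is no stem, finishing the construction. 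In cases (ii) and (iv) the hypothesis \eqref{zgod'} forces $\varepsilon_0=0$; in case (iii) the same identity follows by combining \eqref{prob} at $n=\kappa$ with \eqref{prob'} at $\sigma=\rbb_+$. The resulting $\mu_0(\{0\})=0$ is precisely what allows the backward extension along the stem to begin.

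For the stem in cases (ii) and (iv) I proceed by induction on $l\Ge 1$, defining
\[
\mu_{-l}(\sigma):=|\lambda_{-l+1}|^2\int_\sigma\tfrac{1}{s}\D\mu_{-l+1}(s)+\varepsilon_{-l}\,\delta_0(\sigma)
\]
with $\varepsilon_{-l}$ chosen so that the total mass equals $1$. A parallel induction (or Lemma~\ref{lem1}(iv)) yields the closed form
\[
\mu_{-l}(\sigma)=\Big|\prod_{j=0}^{l-1}\lambda_{-j}\Big|^2\sum_{i=1}^\eta|\lambda_{i,1}|^2\int_\sigma s^{-(l+1)}\D\mu_i(s)+\varepsilon_{-l}\,\delta_0(\sigma),
\]
so the normalisation equation becomes exactly \eqref{widly1} (respectively \eqref{widly1'} at $l=\kappa$). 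The equalities in \eqref{widly1} force $\varepsilon_{-l}=0$, hence $\mu_{-l}(\{0\})=0$, permitting the induction to continue, while \eqref{widly1'} provides $\varepsilon_{-\kappa}\Ge 0$ at the root. In case (iii) I reverse direction, setting $\mu_{-\kappa}:=\nu$ and, for $l\in\{0,\ldots,\kappa-1\}$,
\[
\mu_{-l}(\sigma):=\frac{1}{|\prod_{j=l}^{\kappa-1}\lambda_{-j}|^2}\int_\sigma s^{\kappa-l}\D\nu(s),\qquad \varepsilon_{-l}:=0,\qquad \varepsilon_{-\kappa}:=\nu(\{0\}).
\]
Then \eqref{prob} makes each $\mu_{-l}$ a probability measure, \eqref{prob'} makes this $\mu_0$ agree with the one constructed above, and the telescoping ratio $|\prod_{j=l}^{\kappa-1}\lambda_{-j}|^2/|\prod_{j=l+1}^{\kappa-1}\lambda_{-j}|^2=|\lambda_{-l}|^2$ yields \eqref{muu+} at each stem vertex. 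The main bookkeeping obstacle throughout is controlling the atoms at $0$ of the constructed measures; this is exactly what the equality (rather than $\Le$) clauses in \eqref{zgod'} and \eqref{widly1} are engineered to enforce. Once $\{\mu_v\}$ and $\{\varepsilon_v\}$ are in place, Theorem~\ref{main} delivers subnormality of $\slam$.
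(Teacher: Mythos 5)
Your proposal is correct and follows essentially the same route as the paper: in each case you build the explicit consistent system $\{\mu_v\}_{v\in V_{\eta,\kappa}}$ and $\{\varepsilon_v\}_{v\in V_{\eta,\kappa}}$ satisfying \eqref{muu+} (branch measures $s^{j-1}\D\mu_i$ suitably normalized, the branching-vertex measure with atom $\varepsilon_0$, stem measures given by the backward-extension formulas, and $\nu$ pushed forward along the stem in case (iii)) and then invoke Theorem \ref{main}. The only cosmetic differences are that you obtain the stem measures recursively before passing to the closed form the paper writes down directly, and you express the normalizing constants as weight products rather than as $\|\slam^{\kappa-l}e_{-\kappa}\|^2$; these agree.
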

   \begin{proof} Note that the assumption $e_0\in
\dzn{\slam}$ implies that
$\mathscr{E}_{V_{\eta,\kappa}} \subseteq \dzn{\slam}$.

(i) Define the system of Borel probability measures
$\{\mu_v\}_{v\in V_{\eta,0}}$ on $\rbb_+$ and the
system $\{\varepsilon_v\}_{v\in V_{\eta,0}}$ of
nonnegative real numbers by
   \begin{align*}
\mu_{0}(\sigma) & = \sum_{i=1}^{\eta} |\lambda_{i,1}|^2
\int_\sigma \frac 1 s \D \mu_i(s) + \varepsilon_0
\delta_0(\sigma), \quad \sigma \in \borel{\rbb_+},
   \\
\varepsilon_0 & = 1 - \sum_{i=1}^\eta
|\lambda_{i,1}|^2 \int_0^\infty \frac 1 s\, \D
\mu_i(s),
   \end{align*}
and
   \begin{align} \label{kap0}
\mu_{i,n}(\sigma) & = \frac{1}{\|\slam^{n-1}
e_{i,1}\|^2} \int_{\sigma} s^{n-1} \D \mu_i(s), \quad
\sigma \in \borel{\rbb_+}, \, i \in J_\eta, \, n \in
\nbb,
   \\
\varepsilon_{i,n} & = 0, \quad i \in J_\eta, \, n \in
\nbb. \notag
   \end{align}
(We write $\mu_{i,j}$ and $\varepsilon_{i,j}$ instead
of the more formal expressions $\mu_{(i,j)}$ and
$\varepsilon_{(i,j)}$.) Clearly $\mu_{i,1}=\mu_i$ for
all $i \in J_\eta$. Using \eqref{zgod0} and
\eqref{zgod}, we verify that the systems
$\{\mu_v\}_{v\in V_{\eta,0}}$ and
$\{\varepsilon_v\}_{v\in V_{\eta,0}}$ are well-defined
and satisfy the assumptions of Theorem \ref{main}.
Hence $\slam$ is subnormal.

(ii) Define the systems $\{\mu_v\}_{v\in
V_{\eta,\kappa}}$ and $\{\varepsilon_v\}_{v\in
V_{\eta,\kappa}}$ by \eqref{kap0} and
   \begin{align}\label{literki1}
\mu_{0}(\sigma) & = \sum_{i = 1}^{\eta}
|\lambda_{i,1}|^2 \int_\sigma \frac 1 s \D \mu_i(s),
\quad \sigma \in \borel{\rbb_+},
   \\   \label{literki2}
   \mu_{-l} (\sigma) & = \Big|\prod_{j=0}^{l-1}
\lambda_{-j}\Big|^2 \sum_{i=1}^{\eta} |\lambda_{i,1}|^2
\int_{\sigma} \frac 1 {s^{l+1}}\, \D \mu_i(s), \quad
\sigma \in \borel{\rbb_+}, \, l\in J_{\kappa-1},
   \\ \label{literki3}
   \mu_{-\kappa} (\sigma) & =
\Big|\prod_{j=0}^{\kappa-1} \lambda_{-j}\Big|^2
\sum_{i=1}^{\eta} |\lambda_{i,1}|^2 \int_{\sigma} \frac
1 {s^{\kappa+1}}\, \D \mu_i(s) + \varepsilon_{-\kappa}
\delta_0(\sigma), \hspace{0.8ex} \sigma \in
\borel{\rbb_+}, \
   \\  \label{literki4}
\varepsilon_v & =
   \begin{cases}
0 & \text{ if } v\in V_{\eta,\kappa}^\circ,
   \\
   1 - \Big|\prod_{j=0}^{\kappa-1}
\lambda_{-j}\Big|^2\sum_{i=1}^\eta|\lambda_{i,1}|^2
\int_0^\infty \frac 1 {s^{\kappa+1}} \D \mu_i(s) &
\text{ if } v = - \kappa.
   \end{cases}
   \end{align}
Applying \eqref{zgod0}, \eqref{zgod'}, \eqref{widly1}
and \eqref{widly1'}, we check that the systems
$\{\mu_v\}_{v\in V_{\eta,\kappa}}$ and
$\{\varepsilon_v\}_{v\in V_{\eta,\kappa}}$ are
well-defined and satisfy the assumptions of Theorem
\ref{main}. Therefore $\slam$ is subnormal.

(iii) First note that $\|\slam^n e_{-\kappa}\|^2 =
\Big|\prod_{j=\kappa-n}^{\kappa-1}\lambda_{-j}\Big|^2$
for $n \in J_\kappa$. Define the systems
$\{\mu_v\}_{v\in V_{\eta,\kappa}}$ and
$\{\varepsilon_v\}_{v\in V_{\eta,\kappa}}$ by
\eqref{kap0} and
   \begin{align*}
\mu_{-l}(\sigma) & = \frac{1}{\|\slam^{-l + \kappa}
e_{-\kappa}\|^2}\int_\sigma s^{-l + \kappa} \D\nu(s),
\quad \sigma \in \borel{\rbb_+},\, l\in J_\kappa \cup
\{0\},
   \\
\varepsilon_v & =
   \begin{cases}
0 & \text{ if } v\in V_{\eta,\kappa}^\circ,
   \\
\nu(\{0\}) & \text{ if } v = - \kappa.
   \end{cases}
   \end{align*}
Clearly $\mu_{-\kappa}=\nu$, which together with
\eqref{zgod0}, \eqref{prob} and \eqref{prob'} implies
that the systems $\{\mu_v\}_{v\in V_{\eta,\kappa}}$
and $\{\varepsilon_v\}_{v\in V_{\eta,\kappa}}$ satisfy
the assumptions of Theorem \ref{main}. As a
consequence, $\slam$ is subnormal.

(iv) Define the system $\{\mu_v\}_{v\in
V_{\eta,\kappa}}$ by \eqref{kap0}, \eqref{literki1}
and \eqref{literki2}. In view of (ii), the systems
$\{\mu_v\}_{v\in V_{\eta,\kappa}}$ and
$\{\varepsilon_v\}_{v\in V_{\eta,\kappa}}$ with
$\varepsilon_v\equiv 0$ satisfy the assumptions of
Theorem \ref{main}, and so $\slam$ is subnormal.
   \end{proof}
   It is worth mentioning that conditions (ii) and
(iii) of Theorem \ref{omega2} are equivalent without
assuming that \eqref{zgod0} is satisfied.
   \begin{lem} \label{IBJ}
Let $\slam$ be a weighted shift on the directed tree
$\tcal_{\eta,\kappa}$ with nonzero weights $\lambdab =
\{\lambda_v\}_{v \in V_{\eta,\kappa}^\circ}$ such that
$e_0 \in \dzn{\slam}$ and let $\{\mu_i\}_{i=1}^\eta$
be a sequence of Borel probability measures on
$\rbb_+$. Then conditions {\em (ii)} and {\em (iii)}
of Theorem {\em \ref{omega2}} $($with the same
$\kappa$$)$ are equivalent.
   \end{lem}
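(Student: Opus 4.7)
The plan is to exhibit an explicit formula for the measure $\nu$ in condition (iii) and to re-read \eqref{prob'} as an identity of Borel measures, which then transfers the content of \eqref{zgod'}, \eqref{widly1} and \eqref{widly1'} into integral identities involving $\nu$ (and conversely). Throughout, set $\tau := \big|\prod_{j=0}^{\kappa-1} \lambda_{-j}\big|^2 > 0$.

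For the implication (ii)$\Rightarrow$(iii), I would define
\[
\nu(\sigma) := \tau \sum_{i=1}^\eta |\lambda_{i,1}|^2 \int_\sigma \frac{1}{s^{\kappa+1}} \D\mu_i(s) + \varepsilon\, \delta_0(\sigma), \quad \sigma \in \borel{\rbb_+},
\]
with $\varepsilon := 1 - \tau \sum_{i=1}^\eta |\lambda_{i,1}|^2 \int_0^\infty \frac{1}{s^{\kappa+1}} \D\mu_i(s)$. By \eqref{widly1'}, $\varepsilon \Ge 0$, so $\nu$ is a positive Borel measure with $\nu(\rbb_+)=1$. Identity \eqref{prob'} follows at once from the cancellation $s^\kappa/s^{\kappa+1}=1/s$ on $(0,\infty)$, the atom at $0$ contributing nothing because $\kappa \Ge 1$. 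For \eqref{prob} with $n \in J_\kappa$, the same cancellation gives
\[
\int_0^\infty s^n \D\nu(s) = \tau \sum_{i=1}^\eta |\lambda_{i,1}|^2 \int_0^\infty \frac{1}{s^{\kappa-n+1}} \D\mu_i(s);
\]
the right-hand side equals $\big|\prod_{j=\kappa-n}^{\kappa-1} \lambda_{-j}\big|^2$ in view of \eqref{zgod'} (case $n=\kappa$) and \eqref{widly1} applied with $l = \kappa - n$ (case $1 \Le n \Le \kappa - 1$), using the factorisation $\tau = \big|\prod_{j=0}^{l-1} \lambda_{-j}\big|^2 \cdot \big|\prod_{j=l}^{\kappa-1} \lambda_{-j}\big|^2$.

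For the converse (iii)$\Rightarrow$(ii), the key observation is that \eqref{prob'} expresses an identity of Borel measures $s^\kappa \D\nu(s) = \tau \sum_i |\lambda_{i,1}|^2 \frac{1}{s}\D\mu_i(s)$ on $\rbb_+$, both sides vanishing at $\{0\}$ since $\kappa \Ge 1$. Testing this identity with the function $s^{-l}$, for integer $l \in \{0,1,\ldots,\kappa\}$, via truncation to $\sigma_t := (t,\infty)$ and monotone convergence as $t \downarrow 0$, yields
\[
\int_0^\infty s^{\kappa-l} \D\nu(s) = \tau \sum_{i=1}^\eta |\lambda_{i,1}|^2 \int_0^\infty \frac{1}{s^{l+1}} \D\mu_i(s), \quad l \in \{0,1,\ldots,\kappa-1\},
\]
together with $\nu((0,\infty)) = \tau \sum_i |\lambda_{i,1}|^2 \int_0^\infty s^{-(\kappa+1)} \D\mu_i(s)$ for $l=\kappa$. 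Combined with \eqref{prob} at $n = \kappa - l$, the case $l = 0$ delivers \eqref{zgod'} and the cases $l \in J_{\kappa-1}$ deliver \eqref{widly1}, while the case $l = \kappa$ together with $\nu(\rbb_+) \Le 1$ delivers \eqref{widly1'}.

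The one technical point requiring care is the interplay between the possible atom $\nu(\{0\})$ and the convention $1/0 = \infty$; this is handled uniformly by truncating to $(t,\infty)$ and invoking monotone convergence, and no deeper obstacle is anticipated, since the equivalence is at bottom a matter of measure-theoretic bookkeeping in which \eqref{prob'} serves as the bridge between moments of $\nu$ of degrees $0, \ldots, \kappa$ and negative-moment integrals of the $\mu_i$ of orders $1, \ldots, \kappa + 1$.
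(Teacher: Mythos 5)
Your proof is correct and takes essentially the same route as the paper's: you define $\nu$ by the very formula the paper uses for $\mu_{-\kappa}$ in \eqref{literki3}--\eqref{literki4}, exploit the same cancellation $s^{\kappa}\cdot s^{-(\kappa+1)}=s^{-1}$, and recover \eqref{zgod'}, \eqref{widly1} and \eqref{widly1'} by evaluating at $\sigma=\rbb_+$. The only organizational difference is in (iii)$\Rightarrow$(ii), where the paper propagates the measure identity \eqref{prob'} down through the powers by reverse induction on $n$ (its \eqref{intnu}) while you integrate it against $s^{-l}$ in a single step; both amount to the same bookkeeping, and your treatment of the atoms at $0$ (forcing $\mu_i(\{0\})=0$ and bounding $\nu((0,\infty))\Le 1$) matches the paper's.
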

   \begin{proof}
(ii)$\Rightarrow$(iii) Let $\{\mu_{-l}\}_{l=0}^\kappa$
be the Borel probability measures on $\rbb_+$ defined
by \eqref{literki1}, \eqref{literki2} and
\eqref{literki3} with $\varepsilon_{-\kappa}$ given by
\eqref{literki4}. Set $\nu=\mu_{-\kappa}$. It follows
from \eqref{literki3} that for every $n \in J_\kappa$,
   \begin{align}  \label{intnu}
\int_\sigma s^n \D \nu(s) =
\Big|\prod_{j=0}^{\kappa-1} \lambda_{-j}\Big|^2 \;
\sum_{i=1}^\eta |\lambda_{i,1}|^2 \int_\sigma
\frac{1}{s^{\kappa + 1 - n}} \D \mu_i(s), \quad \sigma
\in \borel{\rbb_+}.
   \end{align}
This immediately implies \eqref{prob'}. By
\eqref{literki1}, \eqref{literki2} and \eqref{intnu},
we have
   \begin{align*}
\int_\sigma s^n \D \nu(s) =
   \begin{cases}
\cfrac{|\prod_{j=0}^{\kappa-1}
\lambda_{-j}|^2}{|\prod_{j=0}^{\kappa-n-1}
\lambda_{-j}|^2} \, \mu_{-(\kappa-n)}(\sigma) & \text{
if } n \in J_{\kappa-1},
   \\[3ex]
|\prod_{j=0}^{\kappa-1} \lambda_{-j}|^2 \,
\mu_{0}(\sigma) & \text{ if } n=\kappa,
   \end{cases}
   \end{align*}
for all $\sigma \in \borel{\rbb_+}$. Substituting
$\sigma=\rbb_+$ and using the fact that
$\{\mu_{-l}\}_{l=0}^{\kappa-1}$ are probability
measures, we obtain \eqref{prob}.

(iii)$\Rightarrow$(ii) Given $n \in J_\kappa$, we
define the positive Borel measure $\rho_n$ on $\rbb_+$
by $\rho_n(\sigma) = \int_\sigma s^n \D \nu(s)$ for
$\sigma \in \borel{\rbb_+}$. By \eqref{prob'},
equality \eqref{intnu} holds for $n=\kappa$. If this
equality holds for a fixed $n \in J_\kappa \setminus
\{1\}$, then $\rho_n(\{0\})=0$ and consequently
   \begin{align*}
\int_\sigma s^{n-1} \D \nu(s) = \int_\sigma
\frac{1}{s} \D \rho_n(s) \overset{\eqref{intnu}}=
\Big|\prod_{j=0}^{\kappa-1} \lambda_{-j}\Big|^2 \;
\sum_{i=1}^\eta |\lambda_{i,1}|^2 \int_\sigma
\frac{1}{s^{\kappa + 1 - (n-1)}} \D \mu_i(s)
   \end{align*}
for all $\sigma \in \borel{\rbb_+}$. Hence, by reverse
induction on $n$, \eqref{intnu} holds for all $n\in
J_\kappa$. Substituting $\sigma=\rbb_+$ into
\eqref{intnu} and using \eqref{prob}, we obtain
\eqref{zgod'} and \eqref{widly1}. It follows from
\eqref{intnu}, applied to $n=1$, that for every
$\sigma \in \borel{\rbb_+}$,
   \begin{multline}  \label{nusig}
\nu(\sigma) = \nu(\sigma \setminus \{0\}) + \nu(\{0\})
\delta_0(\sigma) = \int_\sigma \frac{1}{s} \D
\rho_1(s) + \nu(\{0\}) \delta_0(\sigma)
   \\
\overset{\eqref{intnu}}= \Big|\prod_{j=0}^{\kappa-1}
\lambda_{-j}\Big|^2 \; \sum_{i=1}^\eta
|\lambda_{i,1}|^2 \int_\sigma \frac{1}{s^{\kappa + 1}}
\D \mu_i(s) + \nu(\{0\}) \delta_0(\sigma).
   \end{multline}
Substituting $\sigma=\rbb_+$ into \eqref{nusig} and
using the fact that $\nu(\rbb_+)=1$, we obtain
\eqref{widly1'}. This completes the proof.
   \end{proof}
   Now we show that under some additional requirements
imposed on the weighted shift in question the
sufficient conditions appearing in Theorem
\ref{omega2} become necessary (see also Remark
\ref{deterrem} below).
   \begin{thm}  \label{deter}
   Let $\slam$ be a subnormal weighted shift on the
directed tree $\tcal_{\eta,\kappa}$ with nonzero
weights $\lambdab = \{\lambda_v\}_{v \in
V_{\eta,\kappa}^\circ}$. If $e_0 \in \dzn{\slam}$ and
   \begin{align}    \label{detn+1}
\text{$\Big\{\sum_{i=1}^\eta \Big|\prod_{j=1}^{n+1}
\lambda_{i,j}\Big|^2\Big\}_{n=0}^\infty$ is a determinate
Stieltjes moment sequence,}
   \end{align}
then the following four assertions hold\/{\em :}
   \begin{enumerate}
   \item[(i)] if $\kappa = 0$, then there exists a
sequence $\{\mu_i\}_{i=1}^\eta$ of Borel probability
measures on $\rbb_+$ that satisfy \eqref{zgod0} and
\eqref{zgod},
   \item[(ii)] if $0 < \kappa < \infty$, then there
exists a sequence $\{\mu_i\}_{i=1}^\eta$ of Borel
probability measures on $\rbb_+$ that satisfy
\eqref{zgod0}, \eqref{zgod'}, \eqref{widly1} and
\eqref{widly1'},
   \item[(iii)] if $0 < \kappa < \infty$, then there
exist a sequence $\{\mu_i\}_{i=1}^\eta$ of Borel
probability measures on $\rbb_+$ and a Borel
probability measure $\nu$ on $\rbb_+$ that satisfy
\eqref{zgod0}, \eqref{prob} and \eqref{prob'},
   \item[(iv)] if $\kappa=\infty$, then there
exists a sequence $\{\mu_i\}_{i=1}^\eta$ of Borel
probability measures on $\rbb_+$ that satisfy
\eqref{zgod0}, \eqref{zgod'} and \eqref{widly1}.
   \end{enumerate}
Moreover, if $e_0 \in \quasi{\slam}$, i.e.,
$\sum_{n=1}^\infty \big(\sum_{i=1}^\eta
\big|\prod_{j=1}^{n}
\lambda_{i,j}\big|^2\big)^{-\nicefrac{1}{2n}} =
\infty$, then \eqref{detn+1} is satisfied.
   \end{thm}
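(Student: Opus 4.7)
The plan is to reverse the construction in the proof of Theorem \ref{omega2}, extracting the required systems of representing measures from the subnormality of $\slam$. The workhorse is Lemma \ref{charsub2}(ii): when $\{\|\slam^{n+1} e_u\|^2\}_n$ is a determinate Stieltjes moment sequence, the unique representing measure of $\{\|\slam^n e_u\|^2\}_n$ is pinned down by the consistency formula \eqref{muu+}, and condition $2^\circ$ at $u$ holds as an inequality. The iteration then propagates up the spine of $\tcal_{\eta,\kappa}$ from $0$ toward $-\kappa$.

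First I would make the basic identifications. Since $\dzin{n+1}{0} = \{(i,n+1) \colon i \in J_\eta\}$ and $\lambda_{0|(i,n+1)} = \prod_{j=1}^{n+1} \lambda_{i,j}$, Lemma \ref{lem4} gives $\|\slam^{n+1} e_0\|^2 = \sum_{i=1}^\eta |\prod_{j=1}^{n+1} \lambda_{i,j}|^2$, so \eqref{detn+1} is exactly determinacy of $\{\|\slam^{n+1} e_0\|^2\}_n$. Since $0$ is the only branching vertex of $\tcal_{\eta,\kappa}$, the sum defining $\|\slam^m e_v\|^2$ for $v \neq 0$ either has a single term or reduces to $\|\slam^{m'} e_0\|^2$ times a finite constant, so $e_0 \in \dzn{\slam}$ forces $\escr \subseteq \dzn{\slam}$. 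By Proposition \ref{necess}, $\{\|\slam^n e_v\|^2\}_n$ is a Stieltjes moment sequence for every vertex $v$. Pick representing measures $\mu_i$ of $\{\|\slam^n e_{(i,1)}\|^2\}_n$; each is a Borel probability measure, and since $\|\slam^n e_{(i,1)}\|^2 = |\prod_{j=2}^{n+1} \lambda_{i,j}|^2$ this is \eqref{zgod0}. Now apply Lemma \ref{charsub2}(ii) at $u = 0$: we obtain condition $2^\circ$ at $0$, which is \eqref{zgod}, together with determinacy of $\{\|\slam^n e_0\|^2\}_n$ and the unique representing measure
\[
\mu_0(\sigma) = \sum_{i=1}^\eta |\lambda_{i,1}|^2 \int_\sigma \frac{1}{s} \D \mu_i(s) + \varepsilon_0 \delta_0(\sigma), \quad \varepsilon_0 = 1 - \sum_{i=1}^\eta |\lambda_{i,1}|^2 \int_0^\infty \frac{1}{s} \D \mu_i(s).
\]
This settles case (i).

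For cases (ii) and (iv) I would iterate along $0, -1, -2, \ldots, -\kappa$. At each $u = -k$ the scalar identity $\{\|\slam^{n+1} e_{-k}\|^2\}_n = |\lambda_{-k+1}|^2 \{\|\slam^n e_{-k+1}\|^2\}_n$ transports determinacy from the previous step, so Lemma \ref{charsub2}(ii) delivers determinacy of $\{\|\slam^n e_{-k}\|^2\}_n$, the formula $\mu_{-k}(\sigma) = |\lambda_{-k+1}|^2 \int_\sigma \frac{1}{s} \D \mu_{-k+1}(s) + \varepsilon_{-k}\delta_0(\sigma)$, and condition $2^\circ$ at $-k$. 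The crucial observation is that whenever $-k$ has a parent in the tree (that is, $k < \kappa$), applying Lemma \ref{charsub2}(ii) one level higher at $-k-1$ forces $|\lambda_{-k}|^2 \int_0^\infty \frac{1}{s} \D \mu_{-k} \le 1 < \infty$; by the convention of footnote \ref{foot} this gives $\mu_{-k}(\{0\}) = 0$, hence $\varepsilon_{-k} = 0$. Recursively unfolding the formula for $\mu_{-k+1}$ inside that for $\mu_{-k}$ and using $\varepsilon_{-l} = 0$ for $0 \le l \le \kappa - 1$, evaluation at $\sigma = \rbb_+$ yields \eqref{zgod'} (from $\mu_0(\rbb_+) = 1$), \eqref{widly1} for $l \in J_{\kappa - 1}$ (from $\mu_{-l}(\rbb_+) = 1$), and, when $\kappa$ is finite, \eqref{widly1'} from $\mu_{-\kappa}(\rbb_+) = 1$, where equality is weakened to $\le$ because $-\kappa$ has no parent and $\varepsilon_{-\kappa} \Ge 0$ is unconstrained. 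Case (iii) follows immediately from case (ii) by Lemma \ref{IBJ}.

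Finally, the quasi-analytic addendum: if $e_0 \in \quasi{\slam}$ then by \eqref{quasiinv} also $\slam e_0 \in \quasi{\slam}$, which translates, via $\|\slam^{n+1} e_0\|^2 = \sum_i |\prod_{j=1}^{n+1} \lambda_{i,j}|^2$, into $\sum_{n=1}^\infty t_n^{-1/(2n)} = \infty$ for $t_n := \|\slam^{n+1} e_0\|^2$; Carleman's criterion (as invoked in the proof of Theorem \ref{main-0}) yields determinacy of $\{t_n\}_n$, which is \eqref{detn+1}. The main obstacle I anticipate is the step converting $\le$ into $=$ in \eqref{zgod'} and \eqref{widly1}: it requires a parent vertex to be exploited through the chained applications of Lemma \ref{charsub2}(ii), and the bookkeeping between the back-extension value $\vartheta = 1$, the probability normalization, and the $\frac{1}{0} = \infty$ convention must be kept scrupulously straight along the full chain from $0$ down to $-\kappa$.
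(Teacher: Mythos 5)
Your proposal is correct and follows essentially the same route as the paper: choose representing measures $\mu_i$ at the children of $0$, apply the backward-extension machinery (Lemma \ref{charsub2}(ii), equivalently Lemma \ref{charsub-1} on the single-child spine) to propagate determinacy and the consistency formula from $0$ down to $-\kappa$, use the consistency inequality at each parent to force $\varepsilon_{-k}=0$ at every non-root spine vertex, evaluate at $\sigma=\rbb_+$ to get the equalities/inequality, deduce (iii) from (ii) via Lemma \ref{IBJ}, and get the addendum from Carleman's criterion. The only cosmetic difference is that the paper channels the spine iteration through Lemma \ref{charsub-1} rather than re-invoking Lemma \ref{charsub2}(ii) directly, which amounts to the same computation.
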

   \begin{proof}
It is clear that $e_0\in \dzn{\slam}$ implies that
$\mathscr{E}_{V_{\eta,\kappa}} \subseteq \dzn{\slam}$
and
   \begin{align} \label{detn+2}
\|\slam^{n+1} e_0\|^2 = \sum_{i=1}^\eta
\big|\prod_{j=1}^{n+1} \lambda_{i,j}\big|^2, \quad n
\in \zbb_+.
   \end{align}
By Proposition \ref{necess}, for every $u \in
V_{\eta,\kappa}$ the sequence $\{\|\slam^n
e_u\|^2\}_{n=0}^\infty$ is a Stieltjes moment
sequence. For each $i \in J_\eta$, we choose a
representing measure $\mu_i$ of $\{\|\slam^{n}
e_{i,1}\|^2\}_{n=0}^\infty$. It is easily seen that
\eqref{zgod0} holds. Since, by \eqref{detn+1} and
\eqref{detn+2}, the Stieltjes moment sequence
$\{\|\slam^{n+1} e_{0}\|^2\}_{n=0}^\infty$ is
determinate, we infer from Lemma \ref{charsub2},
applied to $u=0$, that \eqref{zgod} holds and
$\{\|\slam^{n} e_{0}\|^2\}_{n=0}^\infty$ is a
determinate Stieltjes moment sequence with the
representing measure $\mu_0$ given by
   \begin{align} \label{muu+2}
\mu_0(\sigma) = \sum_{i=1}^\eta |\lambda_{i,1}|^2
\int_\sigma \frac 1 s \D \mu_i(s) + \varepsilon_0
\delta_0(\sigma), \quad \sigma \in \borel{\rbb_+},
      \end{align}
where $\varepsilon_0$ is a nonnegative real number. In
view of the above, assertion (i) is proved.

Suppose $0 < \kappa \Le \infty$. Since $\{\|\slam^{n}
e_{0}\|^2\}_{n=0}^\infty$ is a determinate Stieltjes
moment sequence, we deduce from Lemma \ref{charsub-1},
applied to $u_0=-1$, that $\{\|\slam^{n+1}
e_{-1}\|^2\}_{n=0}^\infty$ and $\{\|\slam^{n}
e_{-1}\|^2\}_{n=0}^\infty$ are determinate Stieltjes
moment sequences~ and
   \begin{align} \label{jabko}
& \int_0^\infty \frac 1 s \D \mu_0(s) \Le
\frac{1}{|\lambda_{0}|^2},
   \\
& \mu_{-1}(\sigma) = |\lambda_{0}|^2 \int_\sigma \frac
1 s \D \mu_0(s) + \varepsilon_{-1} \delta_0(\sigma),
\quad \sigma \in \borel{\rbb_+}, \label{jabko2}
   \end{align}
where $\mu_{-1}$ is the representing measure of
$\{\|\slam^{n} e_{-1}\|^2\}_{n=0}^\infty$ and
$\varepsilon_{-1}$ is a nonnegative real number.
Inequality \eqref{jabko} combined with equality
\eqref{muu+2} implies that $\varepsilon_0=0$ and
therefore that \eqref{widly1'} holds for $\kappa=1$.
Substituting $\sigma=\rbb_+$ into \eqref{muu+2}, we
obtain \eqref{zgod'}. This completes the proof of
assertion (ii) for $\kappa=1$. Note also that
equalities \eqref{muu+2} and \eqref{jabko2}, combined
with $\varepsilon_0=0$, yield
   \begin{align*}
\mu_{-1}(\sigma) = |\lambda_{0}|^2 \sum_{i=1}^\eta
|\lambda_{i,1}|^2 \int_\sigma \frac 1 {s^2} \D
\mu_i(s) + \varepsilon_{-1} \delta_0(\sigma), \quad
\sigma \in \borel{\rbb_+}.
   \end{align*}
If $\kappa > 1$, then arguing by induction, we
conclude that for every $k\in J_{\kappa}$ the
Stieltjes moment sequences $\{\|\slam^{n+1}
e_{-k}\|^2\}_{n=0}^\infty$ and $\{\|\slam^{n}
e_{-k}\|^2\}_{n=0}^\infty$ are determinate and
   \begin{align} \label{mu-l}
\mu_{-l}(\sigma) =
\Big|\prod_{j=0}^{l-1}\lambda_{-j}\Big|^2
\sum_{i=1}^\eta |\lambda_{i,1}|^2 \int_\sigma \frac 1
{s^{l+1}} \D \mu_i(s), \quad \sigma \in
\borel{\rbb_+}, \, l\in J_{\kappa - 1},
   \end{align}
where $\mu_{-l}$ is the representing measure of
$\{\|\slam^{n} e_{-l}\|^2\}_{n=0}^\infty$.
Substituting $\sigma=\rbb_+$ into \eqref{mu-l}, we
obtain \eqref{widly1}. This completes the proof of
assertion (iv). Finally, if $1 < \kappa < \infty$,
then again by Lemma \ref{charsub-1}, now applied to
$u=-\kappa$, we have $\int_0^\infty \frac 1 s \D
\mu_{-\kappa+1}(s) \Le
\frac{1}{|\lambda_{-\kappa+1}|^2}$. This inequality
together with \eqref{mu-l} yields \eqref{widly1'},
which completes the proof of assertion (ii).

Assertion (iii) can be deduced from assertion (ii) via
Lemma \ref{IBJ}.

Arguing as in the proof of Theorem \ref{main-0}, we
see that if $e_0 \in \quasi{\slam}$, then
\eqref{detn+1} is satisfied.
   \end{proof}
   \begin{rem}  \label{deterrem}
A careful look at the proof reveals that Theorem
\ref{deter} remains valid if instead of assuming that
$\slam$ is subnormal, we assume that $\{\|\slam^n
e_u\|^2\}_{n=0}^\infty$ is a Stieltjes moment sequence
for every $u \in \big\{-k\colon k\in J_\kappa\big\}
\sqcup \{0\} \sqcup \dzi{0}$.
   \end{rem}
   \begin{cor}
   Let $\slam$ be a weighted shift on the directed
tree $\tcal_{\eta,\kappa}$ with nonzero weights
$\lambdab = \{\lambda_v\}_{v \in
V_{\eta,\kappa}^\circ}$ such that $e_0 \in
\dzn{\slam}$ $($or, equivalently,
$\mathscr{E}_{V_{\eta,\kappa}} \subseteq
\dzn{\slam}$$)$. Suppose that $\{\|\slam^n
e_v\|^2\}_{n=0}^\infty$ is a Stieltjes moment sequence
for every $v \in \big\{-k\colon k\in J_\kappa\big\}
\sqcup \{0\} \sqcup \dzi{0}$, and that
$\{\|\slam^{n+1} e_0\|^2\}_{n=0}^\infty$ is a
determinate Stieltjes moment sequence. Then the
following assertions hold\/{\em :}
   \begin{enumerate}
   \item[(i)] $\slam$ is subnormal,
   \item[(ii)] $\{\|\slam^{n+1}
e_{-j}\|^2\}_{n=0}^\infty$ is a determinate Stieltjes
moment sequence for every integer $j$ such that $0 \Le
j \Le \kappa$,
   \item[(iii)] $\slam$  satisfies the consistency condition
\eqref{alanconsi} at the vertex $u=-j$ for every
integer $j$ such that $0 \Le j \Le \kappa$.
   \end{enumerate}
   \end{cor}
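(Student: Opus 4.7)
The plan is to reduce the corollary to a straightforward bundling of previously proved results: Remark \ref{deterrem}, Theorem \ref{deter}, Theorem \ref{omega2}, and Lemma \ref{charsub2}. The key observation that makes everything go through is that the hypotheses have been deliberately phrased to match the conditions appearing in those earlier statements, so no new analytical work is required.

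\textbf{Step 1 (assertion (i)).} First I would note that $e_0\in\dzn{\slam}$ together with \eqref{detn+2} translates the assumption ``$\{\|\slam^{n+1} e_0\|^2\}_{n=0}^\infty$ is a determinate Stieltjes moment sequence'' into condition \eqref{detn+1} of Theorem \ref{deter}. Since the hypothesis that $\{\|\slam^n e_v\|^2\}_{n=0}^\infty$ is a Stieltjes moment sequence for every $v\in\{-k\colon k\in J_\kappa\}\sqcup\{0\}\sqcup\dzi{0}$ is precisely the weakening of subnormality allowed by Remark \ref{deterrem}, the full conclusion of Theorem \ref{deter} applies. This produces a sequence $\{\mu_i\}_{i=1}^\eta$ of Borel probability measures on $\rbb_+$ satisfying \eqref{zgod0}, together with the corresponding structural equalities/inequalities: \eqref{zgod} if $\kappa=0$; \eqref{zgod'}, \eqref{widly1}, \eqref{widly1'} if $0<\kappa<\infty$; or \eqref{zgod'} and \eqref{widly1} if $\kappa=\infty$. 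Feeding these into the matching part (i), (ii), or (iv) of Theorem \ref{omega2} yields subnormality of $\slam$.

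\textbf{Step 2 (assertion (ii)).} The determinacy propagation is already present inside the proof of Theorem \ref{deter}; I would extract it explicitly. Applying Lemma \ref{charsub2}(ii) at $u=0$ (the hypothesis of (ii) of that lemma is exactly our determinacy of $\{\|\slam^{n+1} e_0\|^2\}$ together with the Stieltjes moment sequence property of $\{\|\slam^n e_0\|^2\}$) delivers determinacy of $\{\|\slam^n e_0\|^2\}_{n=0}^\infty$. Then, iterating Lemma \ref{charsub-1}(ii) along the ancestor chain $0,-1,-2,\ldots,-\kappa$, with $u_0=-(j{+}1)$ and $u_1=-j$ at each step, propagates determinacy: at the $j$-th iteration the determinacy of $\{\|\slam^n e_{-j}\|^2\}_{n=0}^\infty$ implies the determinacy of both $\{\|\slam^{n+1} e_{-(j+1)}\|^2\}_{n=0}^\infty$ and $\{\|\slam^n e_{-(j+1)}\|^2\}_{n=0}^\infty$. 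This gives (ii) for all $0\Le j\Le\kappa$.

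\textbf{Step 3 (assertion (iii)).} For each $j$ with $0\Le j\Le\kappa$, I would apply Lemma \ref{charsub2}(ii) at the vertex $u=-j$. The sequence $\{\|\slam^n e_{-j}\|^2\}_{n=0}^\infty$ is a Stieltjes moment sequence by hypothesis (because $-j$ belongs to $\{-k\colon k\in J_\kappa\}\sqcup\{0\}$), and $\{\|\slam^{n+1} e_{-j}\|^2\}_{n=0}^\infty$ is determinate by (ii). Moreover, each child of $-j$ lies in $\{-k\colon k\in J_\kappa\}\sqcup\{0\}\sqcup\dzi{0}$ (for $j=0$ the children form $\dzi{0}$; for $j\Ge 1$ the sole child is $-j+1$), so the assumption of Lemma \ref{charsub2} concerning the children is also satisfied. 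Lemma \ref{charsub2}(ii) then gives the consistency condition \eqref{alanconsi} at $u=-j$.

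There is no serious obstacle here; the only subtlety is remembering to invoke Remark \ref{deterrem}, which frees us from having to assume subnormality at the outset and makes Theorem \ref{deter} the right black box for manufacturing the probability measures needed by Theorem \ref{omega2}.
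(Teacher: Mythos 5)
Your proposal is correct and follows exactly the route of the paper's own (very terse) proof: Remark \ref{deterrem} plus Theorem \ref{deter} to manufacture the measures $\{\mu_i\}_{i=1}^\eta$ and then Theorem \ref{omega2} for (i); the iteration of Lemma \ref{charsub-1}\,(ii) along the chain $0,-1,\ldots,-\kappa$ (which is precisely the determinacy propagation carried out inside the proof of Theorem \ref{deter}, to which the paper simply refers) for (ii); and Lemma \ref{charsub2}\,(ii) at each vertex $-j$ for (iii). You have merely spelled out what the paper compresses into three lines, and all the hypotheses of the cited results are verified correctly.
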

   \begin{proof}
   (i) By using Remark \ref{deterrem}, we find a
sequence $\{\mu_i\}_{i=1}^\eta$ of Borel probability
measures on $\rbb_+$ satisfying \eqref{zgod0} and
exactly one of the conditions (i), (ii) and (iv) of
Theorem \ref{omega2} (the choice depends on $\kappa$),
and then we apply Theorem \ref{omega2}.

   (ii) See the proof of Theorem \ref{deter}.

   (iii) Apply (ii) and Lemma \ref{charsub2}\,(ii).
   \end{proof}
   \subsection*{Acknowledgements} A substantial part of
this paper was written while the second and the fourth
authors visited Kyungpook National University during
the autumn of 2010 and the spring of 2011. They wish
to thank the faculty and the administration of this
unit for their warm hospitality.
   \bibliographystyle{amsalpha}
   
   \end{document}